\documentclass[10pt,reqno,oneside]{amsart}

\usepackage[all]{xy}
\usepackage{amssymb,latexsym}
\usepackage{mltex}
\usepackage{amsmath}
\usepackage[colorlinks=true,linkcolor=blue,citecolor=blue]{hyperref}
 \usepackage{lscape}
\usepackage{enumerate}
\usepackage{amsthm}
\usepackage{hyperref}
\usepackage{multicol}

\usepackage[totalwidth=14cm,totalheight=22cm]{geometry}

\newtheorem{lem}{Lemma}[section]
\newtheorem{thm}[lem]{Theorem}

\newtheorem{prop}[lem]{Proposition}

\theoremstyle{definition}

\theoremstyle{remark}
\newtheorem{rem}[lem]{Remark}

\newcommand{\R}{\mathbb{R}}

\DeclareRobustCommand{\gobblefive}[5]{}

\begin{document}
\title{The Dirichlet problem for the prescribed scalar curvature in Anti-de Sitter space}

\author{Pierre Bayard}

\address[P.~B.]{Facultad de Ciencias, Universidad Nacional Aut\'onoma de M\'exico
\\Av. Universidad 3000, Circuito Exterior S/N
\\Delegaci\'on Coyoac\'an, C.P. 04510, Ciudad Universitaria, CDMX, M\'exico}
\email{bayard@ciencias.unam.mx}

\maketitle

\begin{abstract}
We study the Dirichlet problem for the fully non-linear elliptic equation of second order traducing the prescription of the scalar curvature of a spacelike hypersurface in Anti-de Sitter space. The problem is solved if the datas are strictly convex.
\end{abstract}

\vspace{1cm}

\noindent {\it Keywords: Dirichlet problem, fully non-linear elliptic PDE, $m^{th}$ curvature, scalar curvature, Anti-de Sitter space}.
\\\noindent {\it 2020 Mathematics Subject Classification: 35J60, 53C42, 53C50, 58J32}.

\date{}
\maketitle\pagenumbering{arabic}


\section{Introduction}
Let us consider the space $\R^{n,2},$ which is $\R^{n+2}$ endowed with the metric
$$\sum_{i=1}^ndX_{i}^2-dX_{n+1}^2-dX_{n+2}^2$$
and the quadric model of Anti-de Sitter geometry
\begin{equation}\label{quadric model}
\mathbb{H}^{n,1}:=\{X=(X_1,\ldots,X_{n+2})\in\R^{n,2}:\ \sum_{i=1}^nX_i^2-X_{n+1}^2-X_{n+2}^2=-1\}.
\end{equation}
The metric induced on $\mathbb{H}^{n,1}$ has signature $(n,1)$ and constant sectional curvature $-1.$ $\mathbb{H}^{n,1}$ is not simply connected, and its universal covering space may be described as follows. Let us consider the totally geodesic hypersurface
\begin{equation}\label{intro hyperboloid model}
\mathbb{H}^{n}:=\mathbb{H}^{n,1}\cap\{X_{n+2}=0,\ X_{n+1}>0\}
\end{equation}
with its natural hyperbolic metric $g_H=\sum_{i=1}^ndX_{i}^2-dX_{n+1}^2,$ and the map 
\begin{eqnarray}
\mathbb{H}^n\times\R&\rightarrow&\mathbb{H}^{n,1}\label{intro map HnxR Q}\\
(X,t)&\mapsto& (X_1,\ldots,X_n,\cos t\ X_{n+1},\sin t\ X_{n+1}).\nonumber
\end{eqnarray}
It is a local isometry if $\mathbb{H}^n\times\R$ is endowed with the metric $g_H-\mu^2 dt^2$ with $\mu=X_{n+1}.$ We may consider the Poincar\'e model of hyperbolic geometry instead of the hyperboloid model $\mathbb{H}^n$, that is the unit ball $B^n=\{x\in\R^n|\ |x|<1\}$ of $\R^n$ endowed with the hyperbolic metric 
$$g_H=\frac{4}{(1-|x|^2)^2}\sum_{i=1}^ndx_i^2.$$
By composing the map (\ref{intro map HnxR Q}) with the stereographic projection
\begin{eqnarray}
B^n&\rightarrow&\mathbb{H}^n\\
x&\mapsto&\left(\frac{2}{1-|x|^2}x_1,\ldots,\frac{2}{1-|x|^2}x_n,\frac{1+|x|^2}{1-|x|^2}\right)\nonumber
\end{eqnarray}
we get a local isometry $B^n\times\R\rightarrow\mathbb{H}^{n,1}$ if we equip $B^n\times\R$ with the metric
$$g=g_H-\mu^2dt^2\hspace{.5cm}\mbox{where}\hspace{.2cm}\mu=\frac{1+|x|^2}{1-|x|^2}.$$
The space $\widetilde{AdS}^{n,1}:=B^n\times\R$, with the metric $g$, is known as the Poincar\'e model of Anti-de Sitter geometry. It has the following nice property, which leads to a simple description of its causal structure: if we introduce the spherical metric 
$$g_S=\frac{4}{(1+|x|^2)^2}\sum_{i=1}^ndx_i^2$$
on $B^n$ (it is the metric induced on $B^n$ by a natural stereographic projection 
$$B^n\rightarrow (\mathbb{S}^n)^+:=\{(X_1,\ldots,X_{n+1}),\ \sum_{i=1}^{n+1}X_i^2=1,\ X_{n+1}>0\}$$ 
and the standard round metric on the sphere $\mathbb{S}^n$), we have $g_H=\mu^2 g_S$ and $g=\mu^2(g_S-dt^2).$ So the causal structure of $\widetilde{AdS}^{n,1}$ coincides with the causal structure of the product metric $g_S-dt^2$ on $B^n\times\R.$ We refer to \cite{BoSe1} for further properties of the models of Anti-de Sitter geometry.
\\

We say that an hypersurface $M$ of $\widetilde{AdS}^{n,1}$ is spacelike if the restriction to $M$ of the metric $g$ is riemannian. If $M$ is spacelike, let us denote by $N$ its upward unit normal vector field, i.e. the normal vector field of $M$ such that $g(N,N)=-1$ and $dt(N)>0$. If $\widetilde{\nabla}$ stands for the Levi-Civita connection of $\widetilde{AdS}^{n,1},$ the shape operator of $M$ is
$$S=\widetilde{\nabla}N:TM\rightarrow TM.$$
It is symmetric and its eigenvalues $\lambda_1,\ldots,\lambda_n$ are the principal curvatures of $M$. Let us consider the elementary symmetric functions of the principal curvatures of $M$
$$\sigma_m=\sum_{i_1<i_2<\cdots <i_m}\lambda_{i_1}\lambda_{i_2}\ldots\lambda_{i_m},$$
for $m=1,2,\ldots,n.$ The symmetric function $\sigma_1$ is the mean curvature, $\sigma_n$ is the Gauss-Kronecker curvature and $\sigma_2$ is linked to the intrinsic scalar curvature $Scal_M$ of $M$ by
$$Scal_M=-2\sigma_2-n(n-1).$$
A spacelike hypersurface of $\widetilde{AdS}^{n,1}$ is locally the graph of a smooth function $u:\Omega\subset B^n\rightarrow\R$ such that $|\nabla^Su|<1$ on $\Omega$, where $\nabla^Su$ denotes the gradient of $u$ with respect to the metric $g_S.$ If $e_1^0,\ldots, e_n^0$ is the canonical basis of $\R^n$, the basis $e_i=1/\lambda\ e_i^0,\ i=1,\ldots,n$ with $\lambda=\frac{2}{1+|x|^2}$ is an orthonormal basis of $(B^n,g_S).$ Defining, for $x\in B^n,$ $p\in B(0,1)\subset\R^n$ and $q\in S_n(\R)$ (which is the set of $n\times n$ symmetric matrices with real coefficients), the matrices
$$A(p)=\left(\delta_{ij}-p_ip_j\right)_{i,j}$$
whose inverse is
$$A(p)^{-1}=\left(\delta_{ij}+\frac{p_ip_j}{1-|p|^2}\right)_{i,j}$$
and
\begin{equation}\label{intro def aij xpq}
(a_{ij}(x,p,q))_{ij}:=\frac{1}{\mu(x)\sqrt{1-|p|^2}}\left(A(p)^{-1}\ q\ +d(\log\mu)_x\left(\sum_{s=1}^np_s e_s\right)\ I\right),
\end{equation}
the matrix in $(e_1,\ldots,e_n)$ of the shape operator of the graph of $u$ (using the natural chart $x\mapsto (x,u(x))$ of $M=\mbox{graph}(u)$) is
\begin{equation}\label{intro def Su}
S[u]=\left(a_{ij}(x,\nabla^Su,\nabla^Sdu)\right)_{ij}.
\end{equation}
Here and below, and with a slight abuse of notation, we use $p=\nabla^S u\in B(0,1)\subset\R^n$ and $q=\nabla^Sd u\in S_n(\R)$ to denote the components of the gradient and the hessian of $u$ in the orthonormal basis $(e_i)_{1\leq i\leq n}.$ The computations are carried out in Appendix \ref{appendix computations}. Setting
\begin{equation}\label{def Hmxpq intro}
\mathcal{H}_m(x,p,q):=\frac{m!(n-m)!}{n!}F_m((a_{ij}(x,p,q))_{ij}),
\end{equation}
where $F_m(A)=\sum_I A_{II}$ is the sum over all the multi-indices $I=(i_1,\ldots,i_m)$ with $1\leq i_1<\cdots<i_m\leq n$ of the determinants of the submatrices with rows and columns $I$ obtained from the matrix $A$, $\mathcal{H}_m(x,p,q)$ is the (normalized) $m^{th}$ symmetric function of the eigenvalues of $(a_{ij}(x,p,q))_{ij},$ and the $m^{th}$ curvature of the graph of $u$ is 
\begin{equation}\label{intro Hm=H}
\mathcal{H}_m[u]:=\mathcal{H}_m(x,\nabla^S u,\nabla^Sdu).
\end{equation}

Motivated by the recent classification of entire hypersurfaces with constant scalar curvature in Minkowski space \cite{BS} and its possible extension to Anti-de Sitter geometry, we are interested here in the Dirichlet problem for the equation of the prescribed scalar curvature in $\widetilde{AdS}^{n,1}.$ Let $\Omega$ be an open subset with compact closure in $B^n.$ We consider the Dirichlet problem
\begin{eqnarray}
\mathcal{H}_m[u]&=&H\hspace{.3cm}\mbox{ in }\Omega,\label{Dirichlet problem}\\
u&=&\varphi\hspace{.3cm}\mbox{ on }\partial\Omega\nonumber
\end{eqnarray}
where $H:\overline{\Omega}\rightarrow\R$ positive and $\varphi:\overline{\Omega}\rightarrow\R$ spacelike (i.e. such that $\sup_{\overline{\Omega}}|\nabla^S\varphi|<1$) are given. In order to consider an elliptic problem, we seek a solution in the class of admissible functions
$$K_m=\{u:\overline{\Omega}\rightarrow\R\ C^2,\ \mbox{spacelike such that}\ \mathcal{H}_k[u]>0\mbox{ for }k=1,\ldots, m\}.$$
It means that at every point of the graph of an admissible function $u$ the principal curvatures $\lambda_1,\ldots,\lambda_n$ belong to the cone
$$\Gamma_m=\{(\lambda_1,\ldots,\lambda_n)\in\R^n:\ \sigma_k>0,\ k=1,\ldots,m\}.$$
On $K_m$ the linearized operator of $\mathcal{H}_m$ is elliptic and the operator $\mathcal{H}_m^{\frac{1}{m}}$ is concave with respect to the second derivatives, i.e. the properties
\begin{equation}\label{intro Hm elliptic Km}
\sum_{i,j}\frac{\partial\mathcal{H}_m}{\partial q_{ij}}[u]\ \xi_i\ \xi_j>0
\end{equation}
for all $\xi\in\R^n\backslash\{0\}$ and
\begin{equation}\label{intro Hm concave Km}
\sum_{i,j,\ k,l}\frac{\partial^2\mathcal{H}_m^{\frac{1}{m}}}{\partial q_{ij}\partial q_{kl}}[u]\ \xi_{ij}\ \xi_{kl}\leq 0
\end{equation}
for all $(\xi_{ij})_{ij}\in S_n(\R)$ hold; this is a consequence of the algebraic properties of the operator $\mathcal{H}_m$ recalled in Appendix \ref{appendix properties Hm}. 

We will need to restrict our study to the case $m=2,$ which corresponds to the prescription of the scalar curvature; the main reason is the lack of a maximum principle for the second derivatives of a solution if $2<m<n.$ This is also an obstacle to the solution of the general problem of the prescription of $\mathcal{H}_m$ in Minkowski space \cite{Bay1,Ur}. We will nevertheless consider a general value of $m$ when we derive the required a priori estimates, when it is possible, and restrict to the case $m=2$ otherwise.

The next proposition shows that it is necessary to suppose that $\Omega$ is convex with respect to the spherical metric $g_{S}$ and the hyperbolic metric $g_H:$
\begin{prop}\label{prop intro necessary cond}
If there exists an admissible function which is constant on $\partial\Omega,$ then, at every point of $\partial\Omega,$ the principal curvatures $\kappa_1,\ldots,\kappa_{n-1}$ of $\partial\Omega$ with respect to the hyperbolic metric $g_H$ of $B^n$ belong to $\Gamma_{m-1},$ that is satisfy
\begin{equation}\label{dp bord admissible}
\sigma_k(\kappa_1,\ldots,\kappa_{n-1})>0\ \ \mbox{for } k=1,\ldots,m-1.
\end{equation}
If every spacelike totally geodesic boundary data extends to an admissible function, then $\Omega$ is necessarily convex with respect to the metric $g_S$. If moreover the center $0$ of $B^n$ belongs to $\Omega$, then $\Omega$ is also necessarily convex with respect to the metric $g_H.$
\end{prop}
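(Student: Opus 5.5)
The plan is to compute, at a boundary point, the principal curvatures of the graph of an admissible $u$ with constant boundary value, and then compare them with the curvatures of $\partial\Omega$. Fix $x_0\in\partial\Omega$ and let $\nu$ be the $g_S$-unit inner normal of $\partial\Omega$ at $x_0$. Since $u\equiv c$ on $\partial\Omega$, its tangential derivatives vanish there, so $\nabla^S u=-u_\nu\,\nu$ at $x_0$.

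\textbf{Step 1: the boundary hessian.} Take an orthonormal frame $e_1,\ldots,e_{n-1}$ tangent to $\partial\Omega$ and $e_n=\nu$ (for $g_S$). Differentiating $u=c$ twice along $\partial\Omega$ gives, for $\alpha,\beta<n$,
$$\nabla^Sdu(e_\alpha,e_\beta)=-u_\nu\,\mathrm{II}^S_{\alpha\beta},$$
where $\mathrm{II}^S$ is the second fundamental form of $\partial\Omega$ in $(B^n,g_S)$, taken with respect to the inner normal.

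\textbf{Step 2: the tangential block of $S[u]$.} Since $p=\nabla^Su$ is parallel to $e_n$, the matrix $A(p)^{-1}$ is the identity on the tangential block. Hence, by (\ref{intro def aij xpq}), the $(n-1)\times(n-1)$ tangential block of $S[u]$ is
$$\frac{-u_\nu}{\mu\sqrt{1-u_\nu^2}}\Big(\mathrm{II}^S_{\alpha\beta}+d(\log\mu)(\nu)\,\delta_{\alpha\beta}\Big).$$
Under the conformal change $g_H=\mu^2g_S$, the second fundamental form transforms as $\kappa^H_\alpha=\mu^{-1}\big(\kappa^S_\alpha+d(\log\mu)(\nu)\big)$, up to the sign convention on the normal, which is fixed by consistency. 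So this block equals $\frac{-u_\nu}{\sqrt{1-u_\nu^2}}\,\mathrm{diag}(\kappa_\alpha)$, where $\kappa_\alpha$ are the hyperbolic principal curvatures.

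\textbf{Step 3: ruling out $u_\nu=0$.} Suppose $u_\nu=0$ at $x_0$. Then $p=0$, $S[u]=\mu^{-1}\nabla^Sdu$, and the tangential block vanishes. On the other hand, $S[u]$ has eigenvalues in $\Gamma_m\subset\Gamma_1$. By the standard fact that a principal submatrix of a $\Gamma_m$-matrix has eigenvalues in $\Gamma_{m-1}$ (Appendix \ref{appendix properties Hm}), the zero tangential block would lie in $\Gamma_{m-1}$, which is a contradiction since $m\geq1$. (For $m=1$ the cone condition is vacuous, so this step is only needed when $m\geq2$.) Therefore $u_\nu\neq0$.

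\textbf{Step 4: fixing the sign.} If $u_\nu>0$ (so $u$ increases inward), applying the same submatrix property to $\frac{-u_\nu}{\sqrt{1-u_\nu^2}}\kappa$ gives $-\kappa\in\Gamma_{m-1}$, i.e. $\partial\Omega$ is curved the wrong way. I would exclude this by a maximum principle argument: $\mathcal{H}_1[u]>0$ is an elliptic inequality, so $u$ attains its maximum on $\partial\Omega$, giving $u\leq c$ in $\Omega$ and hence $u_\nu\leq0$. With $u_\nu<0$, the tangential block is a positive multiple of $\mathrm{diag}(\kappa)$, and the submatrix property yields $\kappa\in\Gamma_{m-1}$. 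This proves (\ref{dp bord admissible}).

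\textbf{Convexity statements.} For the second statement, take boundary data $\varphi$ whose graph is a spacelike totally geodesic hypersurface. In the Poincaré model these are, up to isometry, graphs that intersect $\{t=c\}$ along $g_S$-totally geodesic spheres. Consider a supporting $g_S$-geodesic hyperplane direction at a hypothetical nonconvex boundary point. Let $P$ be a totally geodesic spacelike graph whose level set through $x_0$ is tangent to $\partial\Omega$ there. Then $u-P$ vanishes on $\partial\Omega$, and comparing with the Step 1–4 computation (where the tangential block now involves $\mathrm{II}^S$ minus that of the totally geodesic level set, which is zero) forces $\mathrm{II}^S\geq0$, i.e. $g_S$-convexity. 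When $0\in\Omega$, the data $t=c$ is totally geodesic and centered at $0$, so the first part with $m\geq2$ gives $\sigma_1(\kappa)>0$. Upgrading this to full $g_H$-convexity would use the family of totally geodesic slices through $0$ tilted in every direction, together with connectedness.

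\textbf{Main obstacle.} I expect the main obstacle to be precisely the last step: identifying which totally geodesic spacelike slices produce which tangential curvature term ($g_S$ versus $g_H$), and correctly handling the term $d(\log\mu)(\nu)$.
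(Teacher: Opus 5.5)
\textbf{Verdict: genuine gap.} Your proof of the first claim is correct and is the paper's argument specialized to constant data, which is totally geodesic. The convexity claims are not proved, and the mechanism you sketch cannot prove them.

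\textbf{The first claim.} The tangential block of $S[u]$ at $x_0$ is $-u_\nu/\sqrt{1-u_\nu^2}$ times $\mathrm{diag}(\kappa_\alpha)$. You get $u_\nu\neq 0$ from the $\Gamma_{m-1}$ condition and $u_\nu\le 0$ from the maximum principle for $\mathcal{H}_1>0$, and the compression property then gives $\kappa\in\Gamma_{m-1}$. One slip: the block is really $\frac{-u_\nu}{\mu\sqrt{1-u_\nu^2}}\big(II^S-(\log\mu)_\nu\,\delta\big)$, and the conformal relation is $\kappa_\alpha=\mu^{-1}(\kappa'_\alpha-(\log\mu)_\nu)$. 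Both of your signs are wrong and they cancel; this has nothing to do with the orientation of the normal.

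\textbf{Why your mechanism fails for convexity.} There are two separate problems.
\begin{enumerate}
\item Level sets of a totally geodesic spacelike graph are not $g_S$-totally geodesic. They are intersections with the totally geodesic slices $\{t=c\}\cong(B^n,g_H)$, so they are $g_H$-totally geodesic, with $II^S=(\log\mu)_\nu\,g_S$. Take data $P$ with $\nabla^SP(x_0)$ normal to $\partial\Omega$. Then $P_{\alpha\beta}=-P_\nu(\log\mu)_\nu\delta_{\alpha\beta}$, and the tangential block becomes a positive multiple of $-(u_\nu-P_\nu)\big(II^S-(\log\mu)_\nu\delta\big)=-(u_\nu-P_\nu)\,\mu\, II^H$. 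So you only recover $\kappa\in\Gamma_{m-1}$ again.
\item Any argument whose conclusion is that some compressed matrix lies in $\Gamma_{m-1}$ controls only $\sigma_1,\dots,\sigma_{m-1}$. For $m=2$ this is mean convexity. It never gives the sign of each principal curvature, so ``forces $II^S\ge 0$'' does not follow.
\end{enumerate}
The $g_H$ statement has the same defect: ``tilted slices plus connectedness'' is not an argument.

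\textbf{The missing idea: an anisotropic amplification.} The paper takes totally geodesic data $\varphi$ with $\nabla^S\varphi(x_0)=\varphi_1e_1$ tangent to $\partial\Omega$, along a principal direction $e_1$. Now $p'=\varphi_1e_1\ne 0$, so $A(p)^{-1}$ is no longer the identity on the tangential block. Instead you use the coefficient of $q_{nn}$ in $F_k(A^{-1}(p)q)$, which equals $\frac{1-|p'|^2}{1-|p|^2}\mathcal{F}_{k-1}(p',q')$. At $x_0$ the matrix $A^{-1}(p')q'$ is diagonal. Its first entry is $-u_n\big(\frac{\kappa'_1}{1-\varphi_1^2}-(\log\mu)_n\big)$ and the others are $-u_n\mu\kappa_i$. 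Since $\mathcal{H}_1[\varphi]=0$, comparison gives $u\le\varphi$, hence $-u_n>0$. Then $\mathcal{F}_1(p',q')>0$ yields
\[
\frac{\kappa'_1}{1-\varphi_1^2}+O(1)>0 ,
\]
where $O(1)$ depends only on $\partial\Omega$ and $\mu$ at $x_0$, not on $\varphi_1$. Letting $|\varphi_1|\to 1$ forces $\kappa'_1\ge 0$. This isolates a single spherical principal curvature, because the slope amplifies $\kappa'_1$ but not the conformal term.

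\textbf{The $g_H$ claim.} After $g_S$-convexity, no tilting is needed. If $0\in\Omega$, Lemma \ref{app mun negatif} gives $\mu_n\le 0$ because $\nabla^H\mu=x$. Then $\kappa_i=\mu^{-1}(\kappa'_i-(\log\mu)_n)\ge 0$.
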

The proof will be carried out in Section \ref{section necessary}. In view of that result it is natural to suppose that $\Omega\subset B^n$ is strictly convex with respect to the metric $g_{S}$ and contains the center 0 of $B^n.$ We will also consider a Dirichlet data $\varphi:\overline{\Omega}\rightarrow\R$ whose graph $M_{\varphi}$ is a spacelike hypersurface with boundary in $\widetilde{AdS}^{n,1}=B^n\times\R$ satisfying the following convexity assumptions:
\\
\\(H1) $M_{\varphi}$ is strictly convex: all its principal curvatures in $\widetilde{AdS}^{n,1}$ with respect to the upward unit normal are positive;
\\
\\(H2) $\partial M_{\varphi}$ is strictly convex: denoting by $II_{\partial M_{\varphi}}$ the second fundamental form of $\partial M_{\varphi}$ in $\widetilde{AdS}^{n,1}$, we suppose that $\langle II_{\partial M_{\varphi}},n\rangle$ is positive definite for every \emph{inward-directed} spacelike vector $n$ normal to $\partial M_{\varphi}$. 
\\

Inward-directed spacelike vectors normal to $\partial M_{\varphi}$ are defined as follows: at every point of $\partial M_{\varphi}$, the normal space $(T\partial M_{\varphi})^{\perp}$ is two-dimensional and equipped with the lorentzian metric $\langle.,.\rangle=g_{|(T\partial M_{\varphi})^{\perp}}$, and its set of spacelike vectors has two components; we say that a spacelike vector $n\in(T\partial M_{\varphi})^{\perp}$ is inward-directed if it belongs to the component of the inner unit vector $N_1$ normal to $\partial M_{\varphi}$ and tangent to $M_{\varphi}$ at that point, that is if it satisfies $\langle n,N_1\rangle>0$. 
\\

The main result of the paper is the following:
\begin{thm} \label{dp main thm}
Let $\Omega$ be an open subset with compact closure in $B^n$, with $\partial\Omega\in C^{2,\alpha},$ $\alpha\in (0,1)$. Assume moreover that $\Omega$ is strictly convex with respect to the metric $g_S$ and contains the center 0 of $B^n.$ Let $H:\overline{\Omega}\rightarrow\R$ be a positive function of class $C^{2,\alpha}$. Then, for every spacelike function $\varphi:\overline{\Omega}\rightarrow\R$ belonging to $C^{4,\alpha}$ and whose graph satisfies the convexity conditions (H1) and (H2) above, the Dirichlet problem (\ref{Dirichlet problem}) for $m=2$ admits a unique admissible solution $u\in C^{4,\alpha}.$ 
\end{thm}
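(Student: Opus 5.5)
The plan is to follow the classical continuity method of Caffarelli--Nirenberg--Spruck, in the form used for the Minkowski case by Bayard and Urbas. The point is that existence in $C^{4,\alpha}$ reduces to a priori $C^{2,\alpha}$ bounds, together with uniform spacelikeness, for admissible solutions of a family of problems connecting the data $\varphi$ to the prescribed $H$.

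\textbf{Continuity path.} By (H1) the graph of $\varphi$ is strictly convex, so $\varphi\in K_n\subset K_2$ and $H_0:=\mathcal{H}_2[\varphi]>0$. I would consider
$$\mathcal{H}_2[u_t]=H_t:=\big((1-t)H_0^{1/2}+tH^{1/2}\big)^2,\qquad u_t=\varphi \mbox{ on }\partial\Omega,$$
with $u_0=\varphi$. The set of $t\in[0,1]$ for which an admissible solution in $C^{4,\alpha}$ exists is open: the linearized operator is elliptic by (\ref{intro Hm elliptic Km}), and it has no zeroth-order term because $\mathcal{H}_2$ does not depend on $u$. So the implicit function theorem applies in $C^{2,\alpha}$, and Schauder bootstrap upgrades the solution to $C^{4,\alpha}$ since $H\in C^{2,\alpha}$. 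Uniqueness, and the fact that the solution stays in $K_2$, follow from the comparison principle for elliptic operators independent of $u$. Closedness requires bounds on $\|u_t\|_{C^{2,\alpha}}$, on $\sup|\nabla^S u_t|<1-\delta$, and on the ellipticity, all uniform in $t$. Once $C^2$ bounds and uniform spacelikeness are available, concavity of $\mathcal{H}_2^{1/2}$, i.e. (\ref{intro Hm concave Km}), lets Evans--Krylov and Krylov's boundary estimate give $C^{2,\alpha}$.

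\textbf{$C^0$ and gradient bounds.} Since $u_t\in K_2$ gives $\mathcal{H}_1[u_t]>0$, comparing with the solution of the maximal-type problem gives the lower barrier. I would take $\varphi$ itself as an upper barrier, or use a comparison with $\mathcal{H}_1$-subsolutions built from totally geodesic or umbilic slices of $\widetilde{AdS}^{n,1}$. The $C^0$ bound is then automatic, because spacelike graphs over compact $\Omega$ with fixed boundary values are bounded.

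The real issue is the strict gradient bound $|\nabla^S u|\le 1-\delta$. At the boundary I would obtain it from barriers. Here hypothesis (H2) and the $g_S$-convexity of $\Omega$ are used, following Proposition~\ref{prop intro necessary cond}: for each boundary point one constructs a spacelike totally geodesic hypersurface, or a slightly bent one, lying below $u$ and touching $\partial M_\varphi$. Strict convexity of $\partial M_\varphi$ along inward spacelike normals guarantees a uniformly spacelike such support from below. From above, $\varphi$ serves, since $\mathcal{H}_2[\varphi]$ can be arranged larger on the path, or a suitable convex barrier does. In the interior I would apply the maximum principle to $\nu=-g(N,\partial_t)\mu^{-1}$, the tilt function, multiplied by a function of the height. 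I would use the fact that $\mathcal{H}_2$ has no $u$-dependence and that $0\in\Omega$ allows comparison with the time-like Killing field; this is presumably where that hypothesis enters.

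\textbf{$C^2$ estimates (main obstacle).} For the interior second-derivative bound, the plan is to apply the maximum principle to $\log\lambda_{\max}+\beta\nu+\gamma t$ along the graph. I would use concavity of $\mathcal{H}_2^{1/2}$ and the special structure for $m=2$, namely that $\sigma_2$-admissible matrices control the largest eigenvalue via $\sum F^{ii}\lambda_i^2$; this is exactly the point where $m=2$ is needed. The hardest step I expect is the boundary $C^2$ estimate. The tangential second derivatives come from $\varphi$, and the mixed normal-tangential ones come from a barrier $w=A(\varphi-u)+B\,d-Nd^2$ applied to tangential derivatives of $u-\varphi$. The double normal derivative requires showing that $\mathcal{H}_1$ of the tangential part of the boundary Hessian is uniformly positive, i.e. that boundary principal curvatures stay in $\Gamma_1$ uniformly, in the style of Trudinger's argument. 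This is where (H2) is used decisively. I would first attempt a contradiction argument minimizing the relevant tangential quantity over $\partial\Omega$ and comparing with the strictly convex $\partial M_\varphi$.
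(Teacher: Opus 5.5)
\textbf{The gap: the boundary gradient estimate.} Your overall architecture matches the paper: continuity method, $C^0$ bound from spacelikeness, maximum-principle reductions to the boundary, and Trudinger's minimization trick for the double normal derivative when $m=2$. The step that fails as described is the boundary gradient estimate, which is exactly where (H2) is needed.

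First, your barrier directions are reversed. By the comparison principle, the admissible function with the larger curvature lies below. Since $\mathcal{H}_1[u]>0$, the maximal or constant-$\mathcal{H}_1$ solution with data $\varphi$ is an \emph{upper} barrier. This is how the paper gets the upper bound, using Bartnik's solution of $\mathcal{H}_1=c$ with $c$ from Maclaurin's inequality. A totally geodesic hypersurface above $\partial M_\varphi$ is likewise an upper barrier. Conversely, $\varphi$ is a \emph{lower} barrier only where $\mathcal{H}_2[\varphi]\ge H$, which nothing guarantees along the path.

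Second, and decisively, no totally geodesic or ``slightly bent'' hypersurface can serve as the lower barrier. It has $\mathcal{H}_2\approx 0<H$, so comparison does not place it below $u$. In fact no lower barrier independent of $\sup H$ can exist: for fixed boundary data, solutions become nearly null at $\partial\Omega$ as $\sup H\to\infty$. Think of umbilic pieces of very large curvature over a ball, which approach a light cone.

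\textbf{The missing idea.} You need an admissible lower barrier $\psi$ with $\mathcal{H}_2[\psi]=c_0\ge\sup H$, $\psi\le\varphi$ on $\partial\Omega$, touching at the boundary point with slope uniformly less than $1$. The paper builds it as follows.
\begin{itemize}
\item An isometry sends the boundary point to $(0,0)$, $T_{x_0}M_\varphi$ to $\R^n$ and $T_{x_0}\partial M_\varphi$ to $\R^{n-1}\times\{0\}$.
\item $\psi$ is an umbilic equidistant hypersurface of curvature $c_0$, boosted (large $|\beta|$) so that $|\nabla^S\psi|$ is close to $1$ on the relevant compact set.
\item With $L$ the operator satisfying $(Lu)_{ij}=\mu\sqrt{1-|\nabla^S u|^2}(\delta_{ik}-u_iu_k)h^k_j$, one gets $L\psi=c_0^{1/2}\mu\sqrt{1-|\nabla^S\psi|^2}\,(\delta_{ij}-\psi_i\psi_j)$. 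This is smaller than $L\varphi$, which is uniformly positive by (H1).
\item $\psi\le\varphi$ on $\partial\Omega$ then follows from a one-dimensional maximum principle along $g_H$-geodesics from the center. These geodesics are rays, and $\nabla^H\mu(x)=x$, so the lower-order terms of $L$ coincide for $\varphi$ and $\psi$ at a critical point.
\end{itemize}
This argument needs the projected domain in every such chart to be convex and to contain $0$, and that is precisely what (H2) provides. It does so through $II^H_{\partial\Omega}(X,X)\ge\langle II_{\partial M}(F_*X,F_*X),n'\rangle$ and a connectedness argument over the adapted isometries, started from the vertical translation.

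That starting point is where $0\in\Omega$ first enters. It enters again through $\mu_\gamma\le 0$ in the boundary $C^2$ estimates, which gives the positivity of $II^S_{\partial\Omega}-(\log\mu)_\gamma g_{\partial\Omega}$. It does not enter through a Killing field in the interior gradient estimate, which in the paper also needs the factor $\mu^A$ in $\nu\mu^Ae^{Ku}$.

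\textbf{Secondary differences in the $C^2$ step.} Your $C^2$ sketches differ from what the paper uses:
\begin{itemize}
\item For the interior, Urbas's $\eta^\beta h_{11}$ with $\eta=\mu\sin(t-a)$, which satisfies $\widetilde\nabla d\eta=\eta g$.
\item For the mixed derivatives, an Ivochkina-type function $g(\cdot,\nabla^S u)-\frac K2\sum_{s<n}(u_s-u_s(x_0))^2$. It absorbs the terms $\sum_\alpha\sigma_{1,\alpha}|u_{\alpha\alpha}|$ produced by the $p$-dependence of the operator, which your plain barrier does not address.
\end{itemize}
Still, the lower gradient barrier is the piece whose absence blocks the argument.
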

\begin{rem}
(1) If $\Omega$ is strictly convex with respect to $g_S$ and contains the center $0$ of $B^n$ then (H2) holds for every totally geodesic boundary data $M_{\varphi}$. This is proved in Lemma \ref{app lem tt geod H2} in the appendix. In particular, if $\varphi:\overline{\Omega}\rightarrow\R$ is spacelike, strictly convex and constant on the boundary, its graph $M_{\varphi}$ satisfies (H1) and (H2).  
\\(2) Uniqueness is a direct consequence of the following comparison principle. We omit the proof, which is classical.
\begin{prop}
Let $\Omega$ be an open subset with compact closure in $B^n$ and let $u,v\in C^2(\overline{\Omega})$ be two spacelike functions. Let us assume that $v$ is admissible, $\mathcal{H}_m[v]\geq \mathcal{H}_m[u]$ in $\Omega$ and $v\leq u$ on $\partial\Omega.$ Then $v\leq u$ on $\overline{\Omega}.$ Moreover, if $v(x_0)=u(x_0)$ at an interior point $x_0\in\Omega$ then $v\equiv u$ in $\overline{\Omega}$ (the strong comparison principle).
\end{prop}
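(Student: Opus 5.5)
The statement to prove is the comparison principle (the proposition inside the remark), and I would prove it by the classical linearization argument. Set $w=v-u$ and suppose, for contradiction, that $\max_{\overline{\Omega}} w>0$. Since $w\leq 0$ on $\partial\Omega$, this maximum is attained at an interior point. For $t\in[0,1]$ put $u_t=tv+(1-t)u$. Each $u_t$ is spacelike: at every point the set of admissible gradients $\{|p|<1\}$ is convex, and $|\nabla^S u|<1$, $|\nabla^S v|<1$.

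Then write
$$0\leq \mathcal{H}_m[v]-\mathcal{H}_m[u]=\int_0^1\frac{d}{dt}\mathcal{H}_m(x,\nabla^Su_t,\nabla^Sdu_t)\,dt= a^{ij}(x)\,w_{ij}+b^i(x)\,w_i,$$
where $a^{ij}=\int_0^1\partial_{q_{ij}}\mathcal{H}_m(x,\nabla^Su_t,\nabla^Sdu_t)\,dt$ and $b^i$ is the corresponding average of $\partial_{p_i}\mathcal{H}_m$. The catch is ellipticity: (\ref{intro Hm elliptic Km}) holds only for admissible functions, and $u$ need not be admissible. I would handle this at the points where it matters, namely near a maximum point of $w$.

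At an interior maximum $x_0$ of $w$ we have $\nabla^S u=\nabla^S v=:p$ and $\nabla^Sdv\leq\nabla^Sdu$. By (\ref{intro def aij xpq}), for fixed $(x,p)$ the map $q\mapsto (a_{ij})$ is an increasing affine map: it multiplies by the positive definite $A(p)^{-1}$ (the product $A(p)^{-1}q$ is similar to the symmetric $A(p)^{-1/2}qA(p)^{-1/2}$) and adds a multiple of the identity. Hence the eigenvalues of $S[u](x_0)$ dominate those of $S[v](x_0)$ in the natural ordering. Since $\Gamma_m$ is a convex cone with $\Gamma_m+\Gamma_n\subset\Gamma_m$, the eigenvalues of $S[u](x_0)$ lie in $\Gamma_m$. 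The same holds for every $u_t$ at $x_0$, and by continuity also on a neighbourhood of $x_0$.

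To get a strict contradiction I would first perturb. Replace $v$ by $v_\varepsilon=v+\varepsilon(\psi-\sup\psi)$, with $\psi$ chosen so that $\mathcal{H}_m[v_\varepsilon]>\mathcal{H}_m[v]$. A convenient choice is $\psi=e^{Kx_1}$, which for large $K$ has hessian large in a fixed direction; then for small $\varepsilon$ the function $v_\varepsilon$ is still spacelike and admissible, the increase coming from strict ellipticity at $v$. Now $\mathcal{H}_m[v_\varepsilon]>\mathcal{H}_m[u]$ and $v_\varepsilon\leq u$ on $\partial\Omega$. At an interior maximum of $v_\varepsilon-u$, the monotonicity above together with $u$ being in $\Gamma_m$ there gives $\mathcal{H}_m[u]\geq\mathcal{H}_m[v_\varepsilon]$ at $x_0$, because $\mathcal{H}_m$ is increasing on $\Gamma_m$ for fixed $(x,p)$. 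This is a contradiction. Letting $\varepsilon\to0$ yields $v\leq u$.

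For the strong comparison principle, suppose $v(x_0)=u(x_0)$ at an interior point. Then $w\leq 0$ attains its maximum $0$ at $x_0$. By the pointwise argument, $u_t$ is admissible near any point of the contact set $\{w=0\}$. There the linear operator $L=a^{ij}\partial_{ij}+b^i\partial_i$ is uniformly elliptic with continuous coefficients and $Lw\geq0$. Hopf's strong maximum principle then gives $w\equiv0$ near each contact point, so the contact set is open. It is also closed, so by connectedness $v\equiv u$. I expect the main obstacle to be justifying ellipticity of the averaged operator on a whole neighbourhood when $u$ is not a priori admissible, i.e. the continuity argument around the contact set. If it is cleaner, one can instead apply the strong maximum principle on a small ball around a contact point, where all $u_t$ lie in $\Gamma_m$ by openness.
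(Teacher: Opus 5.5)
Your proof is correct, and since the paper omits this proof as classical, it is essentially the intended argument. For the weak principle you use a strict perturbation $v_\varepsilon=v+\varepsilon(e^{Kx_1}-\sup_{\overline{\Omega}}e^{Kx_1})$, which relies on uniform ellipticity of $v$ on the compact set $\overline{\Omega}$, together with Weyl monotonicity and $\Gamma_m+\overline{\Gamma_n}\subset\Gamma_m$ at an interior maximum; this needs no admissibility of $u$. For the strong principle, linearizing on small balls around contact points, where every $u_t$ is admissible, reduces it to Hopf's maximum principle. The one caveat is that this last step needs $\Omega$ to be connected, which you use and which the statement tacitly assumes.
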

\noindent (3) From elliptic regularity theory, if $\partial\Omega\in C^{\infty},$ $\varphi\in C^{\infty}(\overline{\Omega})$ and $H\in C^{\infty}(\overline{\Omega})$, a solution $u$ belongs to $C^{\infty}(\overline{\Omega}).$
\\(4) The method of resolution relies on the obtention of a priori estimates of a solution, see Section \ref{section method C0}. In the paper, the global gradient estimate and the mixed second derivatives estimates at the boundary are in fact obtained for all the values of $m$; the hypothesis $m=2$ is required for the maximum principle for the second derivatives and also for the estimate of the double normal derivatives at the boundary. The same restrictions appear in previous works concerning the Dirichlet problem in Minkowski space, see \cite{Bay1} and \cite{Ur}.
\\(5) In Minkowski space $\R^{n,1}$ it is necessary to suppose that the open set $\Omega$ is convex in $\R^n$. It is also natural to suppose that $M_{\varphi}$ is strictly convex; nevertheless, the  hypothesis (H2) on the convexity of $\partial M _{\varphi}$ is not required, see \cite{Bay1}. In the present paper (H2) is needed for the gradient estimate on the boundary, see Section \ref{section C1}. There is indeed an important difference between the two Dirichlet problems: in $\widetilde{AdS}^{n,1}$ the vertical curves $t\mapsto (x,t)\in\ \widetilde{AdS}^{n,1}=B^n\times\R$ are not geodesics if $x\in B^n$ and $x\neq 0,$ and the center $0$ of $B^n$ plays a special role (as it also appears in the statement of Theorem \ref{dp main thm}). To overcome that difficulty, a more geometric Dirichlet problem in Anti-de Sitter geometry is possible, for the following graphs: if $u:\Omega\rightarrow\R$ is a function defined on an open subset $\Omega$ of a totally geodesic hypersurface $\mathbb{H}^n,$ its graph in $\widetilde{AdS}^{n,1}$ may be defined following the unit speed geodesics normal to $\mathbb{H}^n,$ on a proper time given by $u(x)$ for every $x\in\Omega.$ Nevertheless, entire hypersurfaces of $\widetilde{AdS}^{n,1}$ are not always entire graphs in that sense, what would restrict the applicability of the Dirichlet problem to the construction of entire hypersurfaces with prescribed scalar curvature: this is due to the fact that $\widetilde{AdS}^{n,1}$ is not globally geodesically convex, see for instance \cite{BoSe1}. 
\end{rem}
Let us quote some papers related to the Dirichlet problem, in lorentzian geometry. In Minkowski space, the Dirichlet problem for the equation of prescribed mean curvature was studied in \cite{BaSi}, for the equation of prescribed Gauss curvature in \cite{De}, \cite{BGuan} and \cite{AMLi}, and for the equation of prescribed scalar curvature in \cite{Bay1} and \cite{Ur}. The Dirichlet problem for the equation of prescribed mean curvature in spacetimes was studied in \cite{Bar}. An important motivation of the paper is the construction (and the classification) of entire hypersurfaces with constant scalar curvature in Anti-de Sitter space. Let us mention the following complete classifications (and refer to the bibliographies of these papers for other important contributions): in Minkowski space, entire hypersurfaces of constant mean curvature were classified in \cite{BoSeSmi2}, of constant Gauss curvature in dimension 3 in \cite{BoSeSmi1} and of constant scalar curvature in \cite{BS}. In Anti-de Sitter space, entire hypersurfaces of constant mean curvature were classified in \cite{Tre} and entire surfaces with constant Gauss curvature in dimension 3 in \cite{BoSe2}. Let us finally mention that convex entire hypersurfaces with prescribed curvature in de Sitter space were studied in \cite{SX}.
\\

The paper is organized as follows: we prove in Section \ref{section necessary} the necessity of the convexity assumption on the domain and we describe in Section \ref{section method C0} the method of resolution and the $C^0$ estimate. We then obtain the other required a priori estimates: the $C^1$ estimate is obtained in Section \ref{section C1} and the $C^2$ estimate in Section \ref{section C2}. We finally gather useful auxiliary results at the end of the paper: we prove basic formulas on second fundamental forms in Appendix \ref{appendix computations}, we gather algebraic properties of the operator $\mathcal{H}_m$ in Appendix \ref{appendix properties Hm} and we prove two elementary lemmas on spacelike and equidistant hypersurfaces of $\widetilde{AdS}^{n,1}$ in Appendix \ref{appendix equidistant}.


\section{Necessary conditions on $\Omega$: proof of Proposition \ref{prop intro necessary cond}}\label{section necessary}
We show here that it is necessary to suppose that $\Omega$ is convex with respect to the metric $g_{S},$ and also with respect to the hyperbolic metric $g_H$ if moreover $0$ belongs to $\Omega,$ proving Proposition \ref{prop intro necessary cond}. We will denote by $\nabla^S$ (resp. $\nabla^H$) the Levi-Civita connection of $B^n$ equipped with the metric $g_S$ (resp. $g_H$). We fix a boundary-point $x_0\in\partial\Omega$ and, considering the metric $g_S$, an orthonormal basis $e_1,\ldots,e_{n-1},e_n$ of $T_{x_0}B^n$ such that $e_1,\ldots,e_{n-1}$ are principal directions of $\partial\Omega$ and $e_n$ is the inner unit normal vector at $x_0$. Let us first observe that the principal curvatures $\kappa_1,\ldots,\kappa_{n-1}$ of $\partial\Omega$ with respect to the hyperbolic metric $g_{H}$ are given in terms of the principal curvatures $\kappa'_1,\ldots,\kappa'_{n-1}$ with respect to $g_{S}$ by the formulas
\begin{equation}\label{relation courbures principales Sn Hn}
\kappa_i=\frac{1}{\mu}\left(\kappa'_i-(\log\mu)_n\right),\ i=1,\ldots, n-1,
\end{equation}
which imply the relation
\begin{equation}\label{II partial Omega gS gH}
II_{\partial\Omega}^H=\mu\left(II_{\partial\Omega}^S-(\log\mu)_n\ g_S\right)
\end{equation}
where $II_{\partial\Omega}^H$ and $II_{\partial\Omega}^S$ stand for the second fundamental forms of $\partial\Omega$ with respect to the metrics $g_H$ and $g_S$ respectively. Let us prove (\ref{relation courbures principales Sn Hn}): $\partial\Omega$ is locally the graph $x_n=f(x')$ of a function $f:T_{x_0}\partial\Omega\rightarrow\R$ such that $f(0)=0$ and $df_0=0,$ and we set $\rho(x',x_n)=f(x')-x_n.$ Since the second fundamental form of $\partial\Omega$ with respect to $g_{S}$ is given by the restriction of $\frac{1}{|\nabla^S\rho|_S}\nabla^S d\rho$ to the boundary, and since $|\nabla^S\rho|_S=1$ at $x_0,$ we have $\kappa'_i=\nabla^S d\rho\ (e_i,e_i)$ for $i=1,\ldots,n-1.$ We may easily compare $\nabla^{H}$ and $\nabla^S$ since $g_H=\mu^2g_S,$ and then compare the hessians of $\rho$ with respect to the two metrics. We obtain: for all $X,Y\in T_{x_0}(\partial\Omega),$
$$\nabla^Hd\rho(X,Y)=\nabla^S d\rho(X,Y)+\frac{1}{\mu}d\rho(\nabla^S\mu)\ g_S(X,Y).$$
This implies that, for all $i,j=1,\ldots,n-1,$ $\nabla^{H}d\rho(e_i,e_j)=0$ if $i\neq j$, so that, since $II_{\partial\Omega}^H=\frac{1}{|\nabla^H\rho|_H}\ \nabla^Hd\rho$, $e_1,\ldots,e_{n-1}$ are also principal directions of $\partial\Omega$ with respect to $g_{H}.$ Moreover, since $\nabla^H\rho=1/\mu^2\ \nabla^S\rho$ with $|\nabla^S\rho|_S=1$ we have $|\nabla^H\rho|_H=1/\mu$ and
$$II_{\partial\Omega}^H=\mu\ \nabla^Hd\rho=\mu\nabla^Sd\rho+d\rho(\nabla^S\mu)\ g_S.$$ 
Since $g_{H}(e_i,e_i)=\mu^2$, we deduce that
$$\kappa_i=II_{\partial\Omega}^H\left(\frac{e_i}{\mu},\frac{e_i}{\mu}\right)=\frac{1}{\mu}\left(\kappa'_i+\frac{1}{\mu}\rho_n\mu_n\right)=\frac{1}{\mu}\left(\kappa'_i-\frac{1}{\mu}\mu_n\right),$$
which is (\ref{relation courbures principales Sn Hn}). Let $\varphi:\overline{\Omega}\rightarrow\R$ be a spacelike function whose graph is totally geodesic in $\widetilde{AdS}^{n,1}.$ By (\ref{lem expr h u}) in Appendix \ref{appendix computations} it satisfies in every orthonormal basis of $(B^n,g_S)$
$$\left(\varphi_{ij}\right)_{1\leq i,j\leq n}=-d(\log\mu)(\nabla^S\varphi)\ \left(\delta_{ij}-\varphi_i\varphi_j\right)_{1\leq i,j\leq n}$$
where the gradient and the second covariant derivative of $\varphi$ are with respect to the metric $g_S.$ We moreover assume that $\varphi$ is such that  $\nabla^S \varphi(x_0)=\varphi_1e_1$ (i.e. $\varphi_i=0$ for $i\geq 2$) and that there exists an admissible function $u:\overline{\Omega}\rightarrow\R$ such that $u=\varphi$ on $\partial\Omega.$ We have, at $x_0$,
$u_i=\varphi_i=\varphi_1\delta_{1i}$ for $i=1,\ldots,n-1$ and 
\begin{eqnarray}
\left(u_{ij}\right)_{1\leq i,j\leq n-1}&=&-u_n\left(\rho_{ij}\right)_{1\leq i,j\leq n-1}+\left(\varphi_{ij}\right)_{1\leq i,j\leq n-1}\nonumber\label{expr uij diag}\\
&=&-u_n\ \mbox{diag}(\kappa'_1,\ldots,\kappa'_{n-1})-d(\log\mu)(\nabla^S\varphi)\ \mbox{diag}(1-\varphi_1^2,1,\ldots,1)\label{expr uij diag}
\end{eqnarray}
where $\kappa'_1,\ldots,\kappa'_{n-1}$ are the principal curvatures of $\partial\Omega$ with respect to the metric $g_{S}.$ Note in particular that the matrix $\left(u_{ij}\right)_{1\leq i,j\leq n-1}$ is diagonal. We will use the following result: let $F_k(A)$ be the sum of the $k\times k$ principal minors of a matrix $A\in M_n(\R)$ and suppose that $p=p'+p_n e_n\in B(0,1)\subset\R^n$ with $p'\in\R^{n-1}\times\{0\}$, and $q\in S_n(\R);$ then the coefficient $A_{k}$ of $q_{nn}$ in the expression $\mathcal{F}_k(p,q):=F_k(A^{-1}(p)q)$ is given by
$$A_k=\frac{1-|p'|^2}{1-|p|^2}\mathcal{F}_{k-1}(p',q')$$
where $q'$ is the restriction of the quadratic form $q$ to $\R^{n-1}\times\{0\}$. Since ellipticity implies that $A_k=\frac{\partial\mathcal{F}_k}{\partial q_{nn}}(p,q)>0$ if $q\in\Gamma_k(p):=\{q\in S_n(\R):\ \mathcal{F}_i(p,q)>0,\ i=1,\ldots,k\},$ we deduce that $\mathcal{F}_{k-1}(p',q')>0$ in that case. Since $u$ is admissible, 
$$p=(u_i)_i=(\varphi_1,0,\ldots,0,u_n)\hspace{.3cm}\mbox{and}\hspace{.3cm}q=\left(u_{ij}+d(\log\mu)(\nabla^S u)\ (\delta_{ij}-u_iu_j)\right)_{ij}$$
are such that $q\in \Gamma_k(p)$ for $k=1,\ldots,m,$ which implies that $\mathcal{F}_{k-1}(p',q')>0$ for $k=1,\ldots,m.$ Now at $x_0$ the matrix $A^{-1}(p')q'$ is diagonal with diagonal entries
$$\frac{1}{1-\varphi_1^2}u_{11}+d(\log\mu)(\nabla^S u),\hspace{.5cm}u_{ii}+d(\log\mu)(\nabla^S u),\ 2\leq i\leq n-1.$$
Replacing $u_{ii}$ using (\ref{expr uij diag}) and since $d(\log\mu)(\nabla^S u)-d(\log\mu)(\nabla^S\varphi)=u_n\ (\log\mu)_n$ the condition $\mathcal{F}_{k-1}(p',q')>0$ readily implies (using (\ref{relation courbures principales Sn Hn}) again) that
$$(-u_n)^{k-1}\mu^{k-1}\left(\frac{1}{\mu}\left(\frac{1}{1-\varphi_1^2}\kappa'_1-(\log\mu)_n\right)\sigma_{k-2}(\kappa_2,\ldots,\kappa_{n-1})+\sigma_{k-1}(\kappa_2,\ldots,\kappa_{n-1})\right)>0$$
for $k=2,\ldots,m$. Now, since $\mathcal{H}_1[u]>0$ and $\mathcal{H}_1[\varphi]=0,$ we have $u\leq\varphi$ by comparison, and $u_n\leq\varphi_n=0$ at $x_0,$ so $u_n<0$ and
\begin{equation}\label{ineg phi1 kappa i}
\frac{1}{\mu}\left(\frac{1}{1-\varphi_1^2}\kappa'_1-(\log\mu)_n\right)\sigma_{k-2}(\kappa_2,\ldots,\kappa_{n-1})+\sigma_{k-1}(\kappa_2,\ldots,\kappa_{n-1})>0.
\end{equation}
For $\varphi_1=0$ we deduce from (\ref{relation courbures principales Sn Hn}) that $\sigma_{k-1}(\kappa_1,\ldots,\kappa_{n-1})>0$ for $k=2,\ldots,m,$ that is $\partial\Omega\in\Gamma_{m-1}.$ Since $\frac{\partial\sigma_{k-1}}{\partial\kappa_1}$ is positive on $\Gamma_{k-1},$ it implies that $\sigma_{k-2}(\kappa_2,\ldots,\kappa_{n-1})>0$ for $k=2,\ldots,m$ and, using (\ref{ineg phi1 kappa i}) again, we deduce that $\kappa'_1\geq 0$: if $\kappa'_1$ were strictly negative, (\ref{ineg phi1 kappa i}) would be impossible if $1/(1-\varphi_1^2)$ is chosen large. Since the principal direction $e_1$ was arbitrary, we deduce that $\Omega$ is convex with respect to $g_S.$ Finally, if $0\in\Omega$ then $\mu_n\leq 0$ by Lemma \ref{app mun negatif} in the appendix, and (\ref{relation courbures principales Sn Hn}) implies that $\Omega$ is also convex with respect to $g_H.$


\section{Method of resolution and the $C^0$ estimate}\label{section method C0}
We consider the family of Dirichlet problems
\begin{eqnarray}
\mathcal{H}_m[u]&=&sH+(1-s)\mathcal{H}_m[\varphi]\ \ \mbox{in}\ \Omega\label{eqn Hm s}\\
u&=&\varphi\ \ \mbox{on}\ \partial\Omega\nonumber
\end{eqnarray}
for $s\in [0,1].$ Let
$$K_m=\{u\in C^{2,\alpha}(\overline{\Omega}):\ \sup_{\overline{\Omega}}|\nabla^Su|<1,\ \mathcal{H}_k[u]>0,\ k=1,\ldots,m\}$$
be the set of admissible functions of class $C^{2,\alpha}$ and consider the set
$$S=\{s\in[0,1]:\ (\ref{eqn Hm s})\mbox{ admits a solution } u\in K_m\}.$$
The aim is to show that $1\in S.$ It will rely on the following simple observations:
\\\textbf{a.} $0\in S$ since $\varphi$ belongs to $K_m.$
\\\textbf{b.} $S$ is open in $[0,1]$ by the implicit functions theorem and the theory of linear elliptic equations. 
\\\textbf{c.} $S$ is closed in $[0,1]$ if the following estimates hold: there exist $\theta\in(0,1)$ and $M\geq 0$ such that, for all $s\in [0,1]$ and all $u\in K_m$ solution of (\ref{eqn Hm s}), we have 
\begin{equation}\label{est grad meth res}
\sup_{\overline{\Omega}}|\nabla^Su|\leq 1-\theta
\end{equation}
and
\begin{equation}\label{est C2alpha meth res}
\|u\|_{2,\alpha,\overline{\Omega}}\leq M.
\end{equation}
Here $\|.\|_{2,\alpha,\overline{\Omega}}$ denotes the natural $C^{2,\alpha}$ norm with respect to the metric $g_S$ on $\overline{\Omega}.$ By the Evans-Krylov and the Krylov-Safonov $C^{2,\alpha}$ estimates, (\ref{est C2alpha meth res}) follows from the gradient estimate (\ref{est grad meth res}) and the $C^0$ and $C^2$ estimates
\begin{equation}\label{est C0 C2 meth res}
\sup_{\overline{\Omega}}|u|\leq M_0\hspace{.5cm}\mbox{and}\hspace{.5cm}\sup_{\overline{\Omega}}|\nabla^Sdu|\leq M_2.
\end{equation}
So $S=[0,1]$ and 1 belongs to $S,$ i.e. the Dirichlet problem (\ref{Dirichlet problem}) is solvable, if the estimates (\ref{est grad meth res}) and (\ref{est C0 C2 meth res}) hold.
\\

Let us finally note that the $C^0$ estimate is straightforward. Let us consider $x\in\Omega,$ $y\in\partial\Omega$ and a geodesic $\gamma:[0,d]\rightarrow\overline{\Omega}$ with respect to $g_S$ such that $\gamma(0)=x,$ $\gamma(d)=y
$ and $g_S(\gamma',\gamma')=1$, so that $d=d_S(x,y)$. Since $g_S(\nabla^Su,\nabla^Su)\leq 1,$ we have by the Cauchy-Schwarz inequality
$$|u(y)-u(x)|=\left|\int_0^d\frac{d}{dt}u(\gamma(t))dt\right|=\left|\int_0^dg_S(\nabla^Su(\gamma(t)),\gamma'(t))dt\right|\leq \int_0^d1dt=d$$
which implies that
\begin{equation}\label{C0 u phi d}
\varphi(y)-d\leq u(x)\leq\varphi(y)+d.
\end{equation}
We deduce the following $C^0$ estimate: for all $x\in\Omega,$
$$\inf_{\partial\Omega}\varphi-diam_S(\Omega)\leq u(x)\leq \sup_{\partial\Omega}\varphi+diam_S(\Omega)$$
where $diam_S(\Omega)\in[0,\pi)$ stands for the diameter of $\Omega$ with respect to the metric $g_S.$
\section{The $C^1$ estimate}\label{section C1}
The aim of the section is to obtain the gradient estimate (\ref{est grad meth res}). We first apply a maximum principle to reduce the estimate to an estimate of the gradient on the boundary, and then obtain the estimate on the boundary using barriers. We first introduce some notation.
\subsection{Notation}
We will denote by $\widetilde{\nabla}$ the covariant derivative of $\widetilde{AdS}^{n,1}=B^n\times\R$ equipped with the metric $g=\mu^2(g_S-dt^2),$ and, if $M$ is a spacelike hypersurface of $\widetilde{AdS}^{n,1},$ by $\nabla$ the covariant derivative of $M$ equipped with the metric induced by $g.$ We will denote by $g_{ij}$ and $h_{ij}$ the metric and the second fundamental form of $M$ and we will use the Einstein summation convention, and raise or lower indices of tensors using the metric $(g_{ij}).$ The components of the shape operator of $M$ are thus denoted by $h^i_j,$ and if $F(A)$ is the sum of the principal minors of order $m$ of the matrix $A,$ we set
$$F_i^j:=\frac{\partial F}{\partial h^i_j}\left((h^i_j)_{i,j}\right)\hspace{.5cm}\mbox{and}\hspace{.5cm}F^{ij}=g^{ik}F_k^j.$$
Note that if $(h^i_j)_{ij}$ is diagonal, so is $F_i^j$ and $F^i_ i=\frac{\partial\sigma_m}{\partial\lambda_i}=\sigma_{m-1,i}$, the symmetric function of order $m-1$ of the principal curvatures of the hypersurface other than $\lambda_i.$ We will use a semi-colon to denote the components of the covariant derivative of a tensor. For the sake of simplicity we will often use the symbols $\langle.,.\rangle$ and $|.|$ to denote the metric and its associated norm. Note that it may depend on the context: for instance, on $B^n,$ we will use $|\nabla^Su|$ and $|\nabla^Hu|$ to denote the norm of the gradient of $u$ with respect to the metrics $g_S$ and $g_H;$ in the first case $|.|$ is the norm associated to $g_S$, and in the second case $|.|$ is the norm associated to $g_H$. Finally, we will need the following formula, which may be obtained by direct computations using the Christoffel formulas: for $X=X'+X^{n+1}e_{n+1}\in T\widetilde{AdS}^{n,1}=\R^n\times\R,$
\begin{equation}\label{notation C1 dev cov en+1}
\widetilde{\nabla}_Xe_{n+1}=\frac{1}{\mu}(X'.\mu)\ e_{n+1}+X^{n+1}\mu\nabla^H\mu.
\end{equation}
Here and below $e_{n+1}$ is the vertical vector such that $dt(e_{n+1})=1$, i.e. $e_{n+1}$ is the last vector of the canonical basis of $\R^{n+1}$. We may deduce from that formula that if $Y=Y'+Y^{n+1}e_{n+1}\in \Gamma(T\widetilde{AdS}^{n,1})$ is a vector field such that $Y'$ is independent of $t$, then
\begin{equation}\label{notation C1 dev cov Y}
\widetilde{\nabla}_XY=\nabla^H_{X'}{Y'}+X^{n+1}Y^{n+1}\mu\nabla^H\mu+\frac{1}{\mu}\left(X^{n+1}\ Y'.\mu+Y^{n+1}\ X'.\mu\right)e_{n+1}.
\end{equation}

\subsection{Reduction to the gradient estimate on the boundary}
We consider the time-function $t=x_{n+1}:B^n\times\R\rightarrow\R$ and its gradient
$$T=\widetilde{\nabla} t=-\frac{1}{\mu^2}e_{n+1}.$$
Let $M=\mbox{graph}(u)$ be a spacelike hypersurface in $\widetilde{AdS}^{n,1},$ $N$ its upward unit normal and $\nu$ its angle function defined by $\nu=\langle T,N\rangle.$ The expression (\ref{expression N}) of $N$ implies that
\begin{equation}\label{notation est C1 expr nu}
\nu=\frac{1}{\mu}\frac{1}{\sqrt{1-|\nabla^Su|^2}}
\end{equation}
and
\begin{equation}\label{notation est C1 expr N}
N=\nu\left(\nabla^Su+e_{n+1}\right).
\end{equation}
We assume that $u$ is a solution of the Dirichlet problem (\ref{Dirichlet problem}) and we want to show that $\sup_M\nu\leq C_1$ where $C_1$ is a controlled constant which only depends on $\sup_M|u|$, $\sup_{\partial M}\nu$ and on other controlled quantities as $\mu,$ $H$ and their derivatives on the domain. We will need formulas on some geometric quantities on $M$. In the following we do intrinsic computations on the hypersurface $M,$ and we use the Levi-Civita connection $\nabla$ on $M$ in the formulas in Lemmas \ref{lem Gauss formula}, \ref{lem nablamunu} and \ref{hessianMmu} below.
\begin{lem}\label{lem Gauss formula}
The function $u=t_{|M}:M\rightarrow \R$ satisfies the Gauss formula
\begin{equation}\label{Gauss formula}
u_{ij}+\frac{1}{\mu}\left(u_i\mu_j+u_j\mu_i\right)=\nu h_{ij}
\end{equation}
where $\mu$ is regarded as a function on $M$ and $h_{ij}$ is the second fundamental form of $M.$
\end{lem}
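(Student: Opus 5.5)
We compute the Hessian on $M$ of the restriction $u=t_{|M}$ by comparing it with the Hessian of $t$ in the ambient space $\widetilde{AdS}^{n,1}$. For tangent vectors $X,Y\in TM$ the standard decomposition $\widetilde{\nabla}_XY=\nabla_XY+\langle S X,Y\rangle N$ holds, with sign chosen so that $\langle N,N\rangle=-1$ and $h(X,Y)=-\langle \widetilde{\nabla}_XY,N\rangle=\langle\widetilde{\nabla}_XN,Y\rangle$. It gives
$$\nabla du(X,Y)=\widetilde{\nabla}dt(X,Y)+dt(\widetilde\nabla_XY-\nabla_XY)=\widetilde{\nabla}dt(X,Y)-h(X,Y)\langle T,N\rangle\cdot(\pm1),$$
since $dt(V)=\langle T,V\rangle$. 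With the sign conventions $h_{ij}=\langle\widetilde\nabla_{e_i}N,e_j\rangle$ and $\nu=\langle T,N\rangle$, one gets $dt(\widetilde\nabla_XY-\nabla_XY)=-h(X,Y)\nu$ (the normal component of $\widetilde\nabla_XY$ is $-\langle\widetilde\nabla_XY,N\rangle N=\ldots$). So the first step is to fix these signs carefully, so that the term comes out as $+\nu h_{ij}$ after moving it to the right-hand side.

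The second step is to compute the ambient Hessian $\widetilde{\nabla}dt(X,Y)=\langle\widetilde{\nabla}_XT,Y\rangle$ using $T=-\mu^{-2}e_{n+1}$ and formula (\ref{notation C1 dev cov en+1}). We have
$$\widetilde\nabla_XT=\frac{2}{\mu^3}(X'.\mu)e_{n+1}-\frac{1}{\mu^2}\left(\frac1\mu(X'.\mu)e_{n+1}+X^{n+1}\mu\nabla^H\mu\right)=\frac{1}{\mu^3}(X'.\mu)e_{n+1}-\frac{X^{n+1}}{\mu}\nabla^H\mu.$$
Pairing with $Y$ and using $\langle e_{n+1},Y\rangle=-\mu^2Y^{n+1}$ and $\langle\nabla^H\mu,Y'\rangle=Y'.\mu$ gives
$$\widetilde\nabla dt(X,Y)=-\frac1\mu\left(X'.\mu\ Y^{n+1}+X^{n+1}\ Y'.\mu\right).$$
Since $\mu$ is independent of $t$, $X'.\mu=X.\mu=\mu_X$, and $Y^{n+1}=dt(Y)=u_Y$ on $M$. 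Hence $\widetilde\nabla dt(e_i,e_j)=-\frac1\mu(u_i\mu_j+u_j\mu_i)$. Substituting into the first step yields (\ref{Gauss formula}).

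I expect the main obstacle to be purely bookkeeping: the sign of the normal component given $\langle N,N\rangle=-1$, and the sign convention for $h$ relative to $S=\widetilde\nabla N$. I would check the final sign against a consistency test. At $x=0$, where $\nabla\mu=0$, for the graph of a convex $u$ with $\nu>0$, the formula should give $u_{ij}=\nu h_{ij}$ with $h$ positive. This matches (\ref{intro def aij xpq}), where $S[u]$ has the same sign as $\nabla^Sdu$.
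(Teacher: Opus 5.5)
Your route is the same as the paper's. The paper takes the $e_{n+1}$-component of $\widetilde{\nabla}d\xi(X,Y)=h(X,Y)N$, and since $\langle\cdot,e_{n+1}\rangle=-\mu^2\,dt$ this is exactly your comparison of $\nabla du$ with $\widetilde{\nabla}dt$. Your ambient Hessian $\widetilde{\nabla}dt(X,Y)=-\frac{1}{\mu}(X.\mu\,Y.u+X.u\,Y.\mu)$ is correct and agrees, up to the factor $\mu^2$, with the paper's computation of $\sum_\alpha Y^\alpha\langle\widetilde{\nabla}_Xe_\alpha,e_{n+1}\rangle$.

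However, the one sign you commit to is wrong, and as written the argument produces the formula with $-\nu h_{ij}$. Use your own conventions: $\langle N,N\rangle=-1$ and $h(X,Y)=\langle\widetilde{\nabla}_XN,Y\rangle=-\langle\widetilde{\nabla}_XY,N\rangle$. Then the normal component of $\widetilde{\nabla}_XY$ is $-\langle\widetilde{\nabla}_XY,N\rangle N=+h(X,Y)N$, exactly as in your opening decomposition $\widetilde{\nabla}_XY=\nabla_XY+\langle SX,Y\rangle N$. Hence $dt(\widetilde{\nabla}_XY-\nabla_XY)=h(X,Y)\langle T,N\rangle=+\nu h(X,Y)$, not $-\nu h(X,Y)$. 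This gives $\nabla du=\widetilde{\nabla}dt+\nu h$, and inserting $\widetilde{\nabla}dt(e_i,e_j)=-\frac{1}{\mu}(u_i\mu_j+u_j\mu_i)$ yields the Gauss formula directly. With your $-\nu h$ you would instead get $u_{ij}+\frac{1}{\mu}(u_i\mu_j+u_j\mu_i)=-\nu h_{ij}$. No rearrangement repairs this, because the $\mu$-terms come from $\widetilde{\nabla}dt$ and must end up on the left. Your proposed check at the origin would expose the slip: at a critical point of $u$ at $x=0$ one has $\nu=1$ and $h=\nabla^Sdu=\nabla du$. Once that line is corrected, the proof is complete.
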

\begin{proof}
If $e_\alpha^0,$ $\alpha=1,\ldots,n$ is the canonical basis of $\R^{n}$, the basis $e_{\alpha}=1/\lambda\ e_{\alpha}^0,$ $\alpha=1,\ldots, n$ is a natural orthonormal basis of $(B^n,g_S)$. Let $\xi=\sum_{\alpha=1}^{n+1}\xi^\alpha e_\alpha:M\rightarrow\widetilde{AdS}^{n,1}$ be the position function. By definition, the second fundamental form of $M$ is given by $\widetilde{\nabla}d\xi(X,Y)=h(X,Y)N$ for all $X,Y\in TM.$ Here $d\xi:TM\rightarrow T\widetilde{AdS}^{n,1}$ is a 1-form and $\widetilde{\nabla}$ is the natural connection on $\Gamma(T^*M\otimes T\widetilde{AdS}^{n,1}).$ Writing $d\xi=\sum_{\alpha}d\xi^{\alpha}e_{\alpha}$ we have
\begin{equation*}
\widetilde{\nabla}d\xi(X,Y)=\sum_\alpha\nabla d\xi^\alpha(X,Y)e_\alpha+\sum_\alpha d\xi^\alpha(Y)\widetilde{\nabla}_Xe_\alpha.
\end{equation*}
Taking the scalar product with $e_{n+1}$ and since $\langle N,e_{n+1}\rangle=-\mu^2\nu$ (by (\ref{notation est C1 expr N})) and $\xi^{n+1}=u$ we get
\begin{equation}\label{eqn pf Gauss formula}
\mu^2\nu h(X,Y)=\mu^2\nabla du(X,Y)-\sum_\alpha Y^\alpha\langle\widetilde{\nabla}_X e_\alpha,e_{n+1}\rangle.
\end{equation}
Now, we obtain from (\ref{notation C1 dev cov en+1}) the formulas 
\begin{equation}\label{C1 christoffel 1}
\langle\widetilde{\nabla}_Xe_{\alpha},e_{n+1}\rangle=-X^{n+1}\ \mu\ e_\alpha.\mu=-X.u\ \mu\ e_\alpha.\mu,\ \alpha=1,\ldots,n,
\end{equation}
and
\begin{equation}\label{C1 christoffel 2}
\langle\widetilde{\nabla}_Xe_{n+1},e_{n+1}\rangle=-\mu\ X.\mu,
\end{equation}
where we have used in (\ref{C1 christoffel 1}) that a tangent vector $X\in TM$ satisfies $X^{n+1}=dx_{n+1}(X)=X.u$. So the last term of (\ref{eqn pf Gauss formula}) is
\begin{eqnarray*}
\sum_\alpha Y^\alpha\langle\widetilde{\nabla}_X e_\alpha,e_{n+1}\rangle&=&\sum_{\alpha=1}^n Y^\alpha\langle\widetilde{\nabla}_X e_\alpha,e_{n+1}\rangle+Y^{n+1}\langle\widetilde{\nabla}_X e_{n+1},e_{n+1}\rangle\\
&=&-\sum_{\alpha=1}^n Y^\alpha X.u\ \mu\ e_\alpha.\mu-Y^{n+1}\mu\ X.\mu\\
&=&-\mu X.u\ Y.\mu-\mu Y.u\ X.\mu
\end{eqnarray*}
and the result follows.
\end{proof}
\begin{lem}\label{lem nablamunu}
The gradient of the function $\nu$ is given by
\begin{equation}\label{nablamunu}
\nabla(\mu\nu)=-\frac{1}{\nu}\langle\nabla\mu,\nabla u\rangle\nabla u+\mu S(\nabla u)
\end{equation}
where $S:TM\rightarrow TM$ is the shape operator of $M.$
\end{lem}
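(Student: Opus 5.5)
We compute $\nabla(\mu\nu)$ by differentiating the identity $\mu\nu=-\mu\langle e_{n+1},N\rangle/\mu^2\cdot\mu=-\frac{1}{\mu}\langle e_{n+1},N\rangle$. This identity comes from $T=-\frac{1}{\mu^2}e_{n+1}$ and $\nu=\langle T,N\rangle$, so that
$$\mu\nu=-\frac{1}{\mu}\langle e_{n+1},N\rangle.$$
Equivalently, by (\ref{notation est C1 expr N}) we have $\langle N,e_{n+1}\rangle=-\mu^2\nu$. For $X\in TM$ we differentiate:
$$X.(\mu\nu)=\frac{1}{\mu^2}(X.\mu)\langle e_{n+1},N\rangle-\frac{1}{\mu}\langle\widetilde{\nabla}_Xe_{n+1},N\rangle-\frac{1}{\mu}\langle e_{n+1},\widetilde{\nabla}_XN\rangle.$$

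The first term equals $-\nu\,X.\mu$. For the last term we use $\widetilde{\nabla}_XN=S(X)\in TM$. We then need the tangential part of $e_{n+1}$. Along $M$, the vector $e_{n+1}$ decomposes as $e_{n+1}=e_{n+1}^\top+\frac{\langle e_{n+1},N\rangle}{\langle N,N\rangle}N=e_{n+1}^\top+\mu^2\nu N$. Since $t=x_{n+1}$ and $\widetilde\nabla t=T=-e_{n+1}/\mu^2$, the gradient on $M$ of $u=t_{|M}$ is $\nabla u=T^\top=-e_{n+1}^\top/\mu^2$. Hence $e_{n+1}^\top=-\mu^2\nabla u$, and the last term becomes
$$-\frac{1}{\mu}\langle e_{n+1},S(X)\rangle=\mu\langle\nabla u,S(X)\rangle=\langle \mu S(\nabla u),X\rangle,$$
using the symmetry of $S$. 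This produces the term $\mu S(\nabla u)$ of (\ref{nablamunu}).

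For the middle term we use (\ref{notation C1 dev cov en+1}):
$$\widetilde{\nabla}_Xe_{n+1}=\frac{1}{\mu}(X'.\mu)e_{n+1}+X^{n+1}\mu\nabla^H\mu,$$
with $X^{n+1}=X.u$ and $X'.\mu=X.\mu$, since $\mu$ does not depend on $t$. Taking the product with $N=\nu(\nabla^Su+e_{n+1})$ gives
$$\langle\widetilde\nabla_Xe_{n+1},N\rangle=\frac{1}{\mu}(X.\mu)(-\mu^2\nu)+(X.u)\,\mu\,\nu\,\mu^2 g_S(\nabla^H\mu,\nabla^Su).$$
Here $g=\mu^2g_S$ on horizontal vectors and $\langle\nabla^H\mu,\nabla^Su\rangle_g=\mu^2 g_S(\nabla^H\mu,\nabla^Su)=d\mu(\nabla^Su)$, because $\nabla^H\mu=\mu^{-2}\nabla^S\mu$. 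So the middle term contributes $+\nu X.\mu-\nu(X.u)\,d\mu(\nabla^Su)$. The $\nu X.\mu$ pieces from the first and middle terms cancel, and what remains is
$$\nabla(\mu\nu)=-\nu\,d\mu(\nabla^Su)\,\nabla u+\mu S(\nabla u).$$

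The main obstacle is the final identification $\nu\,d\mu(\nabla^Su)=\frac{1}{\nu}\langle\nabla\mu,\nabla u\rangle$, where the right-hand side is computed intrinsically on $M$. To prove it, note that $\nabla\mu$ is the tangential projection of $\widetilde\nabla\mu=\mu^{-2}\nabla^S\mu$ (the horizontal part only). Therefore
$$\langle\nabla\mu,\nabla u\rangle=\langle\widetilde\nabla\mu,T^\top\rangle=\langle\widetilde\nabla\mu,T\rangle+\langle\widetilde\nabla\mu,N\rangle\langle T,N\rangle.$$
The first product vanishes, since $\widetilde\nabla\mu$ is horizontal and $T$ is vertical. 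In the second, $\langle\widetilde\nabla\mu,N\rangle=\nu\,d\mu(\nabla^Su)$ and $\langle T,N\rangle=\nu$, so $\langle\nabla\mu,\nabla u\rangle=\nu^2d\mu(\nabla^Su)$. This gives exactly (\ref{nablamunu}).

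The care needed is in the sign conventions: the normal projection uses the factor $1/\langle N,N\rangle=-1$, and the sign of $\nabla u=T^\top$ must be tracked consistently. These I would check against the analogous steps in the proof of Lemma \ref{lem Gauss formula}.
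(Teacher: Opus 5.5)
Your proposal is correct and is essentially the paper's computation: differentiate $\langle T,N\rangle$ with $T=-\mu^{-2}e_{n+1}$, use (\ref{notation C1 dev cov en+1}), $T^\top=\nabla u$ and $\widetilde{\nabla}_XN=S(X)$. Working with $\mu\nu$ directly makes the $\nu\,X.\mu$ terms cancel at once. The one real difference is the identity $\langle\nabla\mu,\nabla u\rangle=\nu^2\,d\mu(\nabla^Su)$, i.e. the paper's (\ref{pf lem grad nu eqn interm}). You get it in one line from the decomposition $A^\top=A+\langle A,N\rangle N$, whereas the paper parametrizes $TM$ explicitly as $X'+\langle\nabla^Hu,X'\rangle e_{n+1}$ and solves for $(\nabla f)'$, so your route is shorter and also yields the paper's formula (\ref{pf lem grad nu eqn nabla f nabla H}) immediately.
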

\begin{proof}
Since $\nu=\langle T,N\rangle,$ we have
\begin{equation}\label{eqn dnu pf nablamunu}
d\nu(X)=\langle\widetilde{\nabla}_XT,N\rangle+\langle T,\widetilde{\nabla}_XN\rangle
\end{equation}
for all $X\in TM$. We compute the first term on the right-hand side of (\ref{eqn dnu pf nablamunu}): since $T=-1/\mu^2\ e_{n+1}$ we easily get using (\ref{C1 christoffel 1}) and (\ref{C1 christoffel 2}) that
$$\widetilde{\nabla}_XT=-\frac{1}{\mu}X.u\ \nabla^H\mu+\frac{1}{\mu^3}X.\mu\ e_{n+1}$$
and using (\ref{notation est C1 expr N}) with $\nabla^Su=\mu^2\nabla^Hu$ we deduce that
\begin{eqnarray}
\langle\widetilde{\nabla}_XT,N\rangle&=&-\mu\nu X.u\ \langle\nabla^H\mu,\nabla^H u\rangle-\frac{\nu}{\mu}X.\mu\nonumber\\
&=&-\frac{1}{\mu\nu} X.u\ \langle\nabla\mu,\nabla u\rangle-\frac{\nu}{\mu}X.\mu.\label{dnu interm1}
\end{eqnarray}
We used in the last step the formula
\begin{equation}\label{pf lem grad nu eqn interm} 
\langle\nabla\mu,\nabla u\rangle=\mu^2\nu^2\langle\nabla^H\mu,\nabla^H u\rangle;
\end{equation}
on the left-hand side $\nabla\mu$ and $\nabla u$ belong to $TM$, and on the right-hand side $\nabla^H\mu$ and $\nabla^H u$ belong to $TB^n$ and are gradients computed with respect to the hyperbolic metric $g_H.$ We will prove that formula below, but we first compute the second term of the right-hand side of (\ref{eqn dnu pf nablamunu}) and conclude the proof of (\ref{nablamunu}): noticing that the orthogonal projection of $T$ on $TM$ is $T^{||}=\nabla u,$ we have
\begin{equation}\label{dnu interm2}
\langle T,\widetilde{\nabla}_XN\rangle=\langle T^{||},S(X)\rangle=\langle\nabla u,S(X)\rangle=\langle S(\nabla u),X\rangle,
\end{equation}
and (\ref{eqn dnu pf nablamunu}), together with (\ref{dnu interm1}) and (\ref{dnu interm2}), yields
$$\nabla\nu=-\frac{1}{\mu\nu}\langle\nabla\mu,\nabla u\rangle\nabla u-\frac{\nu}{\mu}\nabla\mu+S(\nabla u),$$
which gives (\ref{nablamunu}). We finally prove (\ref{pf lem grad nu eqn interm}). We first note that a vector $X\in TM$ is of the form $X=X'+\langle\nabla^Hu,X'\rangle\ e_{n+1}$ with $X'\in TB^n$. By composition with the natural projection $M\rightarrow B^n,$ a function $f:B^n\rightarrow\R$ may be considered as a function $f:M\rightarrow\R,$ and by the observation above its gradient may be written $\nabla f=(\nabla f)'+\langle\nabla ^H u,(\nabla f)'\rangle\ e_{n+1}$ where $(\nabla f)'$ belongs to $TB^n.$ We have, for all $X=X'+\langle\nabla^Hu,X'\rangle\ e_{n+1}\in TM,$
$$df(X')=df(X)=\langle\nabla f,X\rangle=\langle (\nabla f)',X'\rangle-\mu^2\langle\nabla ^H u,(\nabla f)'\rangle\langle\nabla^Hu,X'\rangle$$
which implies that 
$$\nabla^Hf=(\nabla f)'-\mu^2\langle\nabla^Hu,(\nabla f)'\rangle\nabla^H u.$$ 
Taking the scalar product with $\nabla^Hu$ and using that 
\begin{equation}\label{est C1 expr gradH mu}
\mu^2|\nabla^Hu|^2=1-1/{\mu^2\nu^2}
\end{equation} 
(as a consequence of (\ref{notation est C1 expr nu}), using that $\nabla^Su=\mu^2\nabla^Hu$ and where the norm of $\nabla^Hu$ is with respect to $g_H$) we deduce that $\langle(\nabla f)',\nabla^Hu\rangle=\mu^2\nu^2\langle\nabla^Hf,\nabla^Hu\rangle.$ So we have
$$(\nabla f)'=\nabla^Hf+\mu^4\nu^2\langle\nabla^Hf,\nabla^Hu\rangle\nabla^Hu$$
and deduce that 
\begin{equation}\label{pf lem grad nu eqn nabla f nabla H}
\nabla f=\nabla^Hf+\mu^2\nu^2\langle\nabla^Hf,\nabla^Hu\rangle\left(\mu^2\nabla^Hu+e_{n+1}\right)=\nabla^Hf+\mu^2\nu\langle\nabla^Hf,\nabla^Hu\rangle N.
\end{equation}
Using (\ref{pf lem grad nu eqn nabla f nabla H}) for $f=u$ and $\mu$, and computing using (\ref{notation est C1 expr N}) with $\nabla^Su=\mu^2\nabla^Hu$ we easily obtain
$$\langle\nabla u,\nabla\mu\rangle=\langle\nabla^H u,\nabla^H\mu\rangle\left(1+\mu^4\nu^2|\nabla^Hu|^2\right),$$
which implies (\ref{pf lem grad nu eqn interm}) and ends the proof of the lemma.
\end{proof}
\begin{rem}
Let us note the following estimate that we will use later: by (\ref{pf lem grad nu eqn nabla f nabla H}) we have
\begin{equation}\label{nabla f leq nablaH f}
|\nabla f|=\left(|\nabla^Hf|^2+\mu^4\nu^2|\langle\nabla^Hf,\nabla^Hu\rangle|^2\right)^{\frac{1}{2}}\leq \mu\nu|\nabla^Hf|;
\end{equation}
it is a consequence of the Schwarz inequality and (\ref{est C1 expr gradH mu}). In particular, the equality on the left-hand side with $f=u$ and formula (\ref{est C1 expr gradH mu}) imply that
\begin{equation}\label{C1 norm nabla u leq mu nu}
\nu^2-1\ \leq\ |\nabla u|^2=\nu^2-\frac{1}{\mu^2}\ \leq\ \nu^2.
\end{equation}

\end{rem}
\begin{lem}\label{hessianMmu}
The hessian of $\mu:M\rightarrow\R$ is given by 
\begin{equation}
\mu_{ij}=\mu (g_{ij}+u_i u_j)+\frac{1}{\nu}\langle\nabla\mu,\nabla u\rangle h_{ij}. 
\end{equation}
\end{lem}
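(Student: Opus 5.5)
The plan is to regard $\mu$ as a function on $\widetilde{AdS}^{n,1}=B^n\times\R$ that does not depend on $t$. Its hessian on $M$ is then compared with its ambient hessian through the second fundamental form, and the ambient hessian is computed from the covariant derivative formula (\ref{notation C1 dev cov Y}).

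\textbf{Step 1: Gauss decomposition.} As in the proof of Lemma \ref{lem Gauss formula}, the convention is $\widetilde{\nabla}_XY-\nabla_XY=h(X,Y)N$ for $X,Y$ tangent to $M$. Hence, for $X,Y\in TM$,
$$\nabla d\mu(X,Y)=X.(Y.\mu)-(\nabla_XY).\mu=\widetilde{\nabla}d\mu(X,Y)+h(X,Y)\,d\mu(N).$$

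\textbf{Step 2: the normal term.} By (\ref{notation est C1 expr N}) we have $N=\nu(\nabla^Su+e_{n+1})$, and $\mu$ does not depend on $t$. Therefore
$$d\mu(N)=\nu\, d\mu(\nabla^Su)=\nu\mu^2\langle\nabla^H\mu,\nabla^Hu\rangle,$$
using $g_H=\mu^2g_S$ and $\nabla^Su=\mu^2\nabla^Hu$. By formula (\ref{pf lem grad nu eqn interm}) established in the proof of Lemma \ref{lem nablamunu}, this equals $\frac{1}{\nu}\langle\nabla\mu,\nabla u\rangle$. This produces exactly the term $\frac{1}{\nu}\langle\nabla\mu,\nabla u\rangle h_{ij}$ in the statement.

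\textbf{Step 3: the ambient hessian.} I extend $X,Y$ to vector fields whose horizontal parts $X',Y'$ are independent of $t$. By (\ref{notation C1 dev cov Y}), the horizontal part of $\widetilde{\nabla}_XY$ is $\nabla^H_{X'}Y'+X^{n+1}Y^{n+1}\mu\nabla^H\mu$, and vertical components are annihilated by $d\mu$. Hence
$$\widetilde{\nabla}d\mu(X,Y)=\nabla^Hd\mu(X',Y')-X^{n+1}Y^{n+1}\mu|\nabla^H\mu|^2_H .$$
Next I use two facts about $\mu=X_{n+1}$, viewed as the height function on the hyperboloid $\mathbb{H}^n$: first, $\nabla^Hd\mu=\mu g_H$; second, $|\nabla^H\mu|_H^2=\mu^2-1$. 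Both follow from the hyperboloid model, or can be checked directly in the Poincar\'e ball. Since $g(X,Y)=g_H(X',Y')-\mu^2X^{n+1}Y^{n+1}$, we obtain
$$\widetilde{\nabla}d\mu(X,Y)=\mu g(X,Y)+\mu X^{n+1}Y^{n+1}.$$
Finally, $X^{n+1}=X.u$ for tangent vectors to the graph, so this is $\mu(g_{ij}+u_iu_j)$ in components. Summing with Step 2 gives the formula of Lemma \ref{hessianMmu}.

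\textbf{Main obstacle.} The only delicate point is bookkeeping: the sign convention for $h$, which is fixed by the Gauss formula proof, and the correct identities $\nabla^Hd\mu=\mu g_H$ and $|\nabla^H\mu|^2=\mu^2-1$. If in doubt, I would verify these identities at $x=0$ and by the symmetry of $\mathbb{H}^n$.
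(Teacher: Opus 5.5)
Your proof is correct and is essentially the paper's argument. The paper likewise writes $\nabla d\mu(X,Y)-\widetilde{\nabla}d\mu(X,Y)=h(X,Y)\,d\mu(N)$. It computes the ambient hessian from (\ref{notation C1 dev cov Y}) using $\nabla^Hd\mu=\mu g_H$ and $|\nabla^H\mu|^2=\mu^2-1$, and it identifies $d\mu(N)=\frac{1}{\nu}\langle\nabla\mu,\nabla u\rangle$ via (\ref{notation est C1 expr N}) and (\ref{pf lem grad nu eqn interm}).
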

\begin{proof}
For all $X,Y\in TM,$ since
$$\nabla_Xd\mu(Y)=X.d\mu(Y)-d\mu(\nabla_XY)\hspace{.5cm}\mbox{and}\hspace{.5cm}\widetilde{\nabla}_Xd\mu(Y)=X.d\mu(Y)-d\mu(\widetilde{\nabla}_XY)$$
we have
\begin{equation}\label{nabla2mueq1}
\nabla_Xd\mu(Y)-\widetilde{\nabla}_Xd\mu(Y)=-d\mu(\nabla_XY-\widetilde{\nabla}_XY)=h(X,Y)d\mu(N).
\end{equation}
We compute $\widetilde{\nabla}_Xd\mu(Y)$: we write $X=X'+X^{n+1}e_{n+1},$ $Y=Y'+Y^{n+1}e_{n+1},$ and assume for the computation that $e_{n+1}.Y'=0.$ Since $d\mu(Y)=d\mu(Y'),$ which is a function which does not depend on $x_{n+1}$, we have $X.(d\mu(Y))=X'.(d\mu(Y'))$ and get
$$\widetilde{\nabla}_Xd\mu(Y)-\nabla^H_{X'}d\mu(Y')=-d\mu\left((\widetilde{\nabla}_XY)'-\nabla^H_{X'}Y'\right);$$
since $(\widetilde{\nabla}_XY)'=\nabla^H_{X'}Y'+X^{n+1}Y^{n+1}\mu\nabla^H\mu$ (a consequence of (\ref{notation C1 dev cov Y})) we deduce that
\begin{eqnarray}
\widetilde{\nabla}_Xd\mu(Y)&=&\nabla^H_{X'}d\mu(Y')-X^{n+1}\ Y^{n+1}\ \mu|\nabla^H\mu|^2\nonumber\\
&=&\mu\langle X',Y'\rangle-X^{n+1}\ Y^{n+1}\ \mu(\mu^2-1)\nonumber\\
&=&\mu\langle X,Y\rangle+\mu X.u\ Y.u,\label{nabla2mueq2}
\end{eqnarray}
where we have also used the following properties of $\mu$ (obtained by direct computations in $(B^n,g_H)$, or using that $\mu=X_{n+1}$ in the model (\ref{intro hyperboloid model}) of $\mathbb{H}^n$): $\nabla^H\mu=x$, $|\nabla^H\mu|^2=\mu^2-1$ and $\nabla^H_{X'}d\mu(Y')=\mu\langle X',Y'\rangle.$ Since $d\mu(N)=\frac{1}{\nu}\langle\nabla\mu,\nabla u\rangle$ (by (\ref{notation est C1 expr N}) with $\nabla^Su=\mu^2\nabla^Hu$ and using (\ref{pf lem grad nu eqn interm})), (\ref{nabla2mueq1}) and (\ref{nabla2mueq2}) yield the result.
\end{proof}
\begin{prop}
Let $M=\mbox{graph}(u)$, where $u:\overline{\Omega}\rightarrow\R$ is an admissible solution of the Dirichlet problem (\ref{Dirichlet problem}), with prescribed curvature function $H:\overline{\Omega}\rightarrow\R.$ Then there exists a constant $C_1$ which only depends on $\sup_M|u|$, $\sup_{\partial M}\nu$, $|\mu|_{1,\overline{\Omega}},$ $\inf_{\overline{\Omega}}H$ and $|H|_{1,\overline{\Omega}}$ such that $\sup_M\nu\leq C_1.$
\end{prop}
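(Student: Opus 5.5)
We apply the maximum principle to an auxiliary function of the form
$$W=\mu\nu\, e^{\Phi(u)}$$
(or $\log W=\log(\mu\nu)+\Phi(u)$), where $\Phi$ is a function of one variable to be chosen. We will take $\Phi(u)=Ku$ or $\Phi(u)=K(u-\inf u)^2$-type with $K$ large; the $C^0$ estimate of Section \ref{section method C0} makes such a $\Phi$ controlled. If the maximum of $W$ is attained on $\partial M$, we are done, since $\mu$ and $u$ are controlled. So suppose it is attained at an interior point $x_0$, where $\nu$ may be assumed large. We choose an orthonormal frame at $x_0$ diagonalizing the shape operator, with $h_{ij}=\lambda_i\delta_{ij}$.

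The first-order condition $\nabla\log W=0$ is handled with Lemma \ref{lem nablamunu}:
$$\frac{1}{\mu\nu}\Big(-\frac1\nu\langle\nabla\mu,\nabla u\rangle u_i+\mu\lambda_i u_i\Big)+\Phi' u_i=0.$$
This gives, for each $i$ with $u_i\ne0$,
$$\lambda_i=-\nu\Phi'+O(1/\nu)\,\langle\nabla\mu,\nabla u\rangle/(\mu\nu).$$
Using (\ref{nabla f leq nablaH f}), $|\langle\nabla\mu,\nabla u\rangle|\le C\nu^2$, so $\lambda_i=-\nu\Phi'+O(1)$. Rotating the frame so that $\nabla u$ is parallel to $e_1$, we get $|\nabla u|^2=u_1^2\ge\nu^2-1$ by (\ref{C1 norm nabla u leq mu nu}), and $\lambda_1=-\Phi'\nu+O(1)$.

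For the second-order condition we differentiate once more and contract with $F^{ij}$, which is positive definite by (\ref{intro Hm elliptic Km}). The hessian of $\mu\nu$ is computed from Lemma \ref{lem nablamunu}. Its main term is $\mu h_{ik;j}u_k+\mu h_{ik}u_{kj}$, and we substitute $u_{kj}=\nu h_{kj}-\frac1\mu(u_k\mu_j+u_j\mu_k)$ from the Gauss formula (Lemma \ref{lem Gauss formula}). The term $F^{ij}h_{ij;k}u_k$ equals $\nabla_{\nabla u}F=\langle\nabla H,\nabla u\rangle$ up to the normalizing constant, after commuting derivatives via Codazzi (the ambient space has constant curvature, so Codazzi holds exactly). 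This term is $O(\nu^2)$ by (\ref{nabla f leq nablaH f}). The good term is $\mu\nu F^{ij}h_{ik}h_{kj}=\mu\nu\sum F^{ii}\lambda_i^2$. The term $\Phi'F^{ij}u_{ij}$ contributes $\Phi'\nu F^{ij}h_{ij}=m\Phi'\nu\,\sigma_m$ plus lower order terms, using Lemma \ref{hessianMmu} where $\mu$ is differentiated. The term $\Phi''F^{ij}u_iu_j$ has sign given by $\Phi''$, which we choose $\ge0$.

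All error terms are bounded by $C\nu^2\sum F^{ii}+C\nu^3 F^{11}|\lambda_1|/\nu$-type expressions. The inequality to reach is
$$0\ge \mu\nu F^{11}\lambda_1^2+\Phi''\nu^2F^{11}-C\nu^2\sum_iF^{ii}-C\nu^2.$$
With $\lambda_1\approx-\Phi'\nu$, the first term is about $\Phi'^2\nu^3F^{11}$.

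The main obstacle is controlling $\sum F^{ii}$ by $F^{11}$. For this we use the algebraic facts of Appendix \ref{appendix properties Hm}. Since $\lambda_1$ is very negative (we take $\Phi'>0$), the standard $\Gamma_m$-cone inequality applies: when $\lambda_1<0$, $F^{11}=\sigma_{m-1,1}\ge c(n,m)\sum_iF^{ii}$. Moreover $\sum F^{ii}\ge c\,\sigma_m^{(m-1)/m}\ge c(\inf H)$. With these, the inequality reads $\nu^3F^{11}\Phi'^2\lesssim C\nu^2F^{11}$, which forces $\nu\le C$.

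We will need to check the signs of the terms carrying $\langle\nabla\mu,\nabla u\rangle$ carefully: they are quadratic in $\nu$ and multiply $h_{11}$. If they cannot be absorbed, we will replace $\Phi$ by $Ku$ with $K$ large, so that $\Phi'^2\nu^3$ dominates.
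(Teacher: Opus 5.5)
The gap is in your second-order step. The term you keep as dominant, $\mu\nu F^{11}\lambda_1^2\approx\Phi'^2\nu^3F^{11}$, is cancelled at leading order by a term you omitted. At the critical point $(\mu\nu)_1=-\Phi'\mu\nu\,u_1$. In the non-log form, substituting this into the cross terms $\Phi'\big((\mu\nu)_iu_j+(\mu\nu)_ju_i\big)$ leaves $\mu\nu(\Phi''-\Phi'^2)F^{ij}u_iu_j$, not $\mu\nu\Phi''F^{ij}u_iu_j$. Since $u_1^2=\nu^2-\mu^{-2}$, only $F^{11}\lambda_1^2(1-u_1^2/\nu^2)=F^{11}\lambda_1^2/(\mu^2\nu^2)$ survives (in the log normalization). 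Also, the first-order condition gives $\lambda_1=\nu(a-\Phi')$ with $a:=\langle\nabla\mu,\nabla u\rangle/(\mu\nu^2)$ bounded but not small, so the correction is $O(\nu)$, not $O(1)$. For $\Phi=Ku$ your maximum condition is exactly (\ref{gmct5}) with $A=1$. In your frame ($e_1\parallel\nabla u$, which is legitimate since $\nabla u$ is an eigenvector at the critical point) it reads
\[
(a^2-1-2aK)\,F^{11}u_1^2+\frac{(K-a)^2}{\mu^2}F^{11}+\sum_{i\geq 2}F^{ii}\lambda_i^2+mHK\nu\leq C\nu .
\]
Whenever $a\geq 0$ the first term is negative: it equals $-F^{11}u_1^2\approx-\nu^2F^{11}$ when $a=0$, and is of order $K\nu^2F^{11}$ when $a>0$. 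It comes from the term $\mu u_iu_j$ in $\mu_{ij}$ (Lemma \ref{hessianMmu}) and from $\frac{2}{\mu\nu^3}\langle\nabla\mu,\nabla u\rangle F^{11}\lambda_1u_1^2$. The good terms you retain, $K^2F^{11}/\mu^2$ and $mHK\nu$, are of lower order in $\nu$. So the inequality $\nu^3F^{11}\Phi'^2\lesssim C\nu^2F^{11}$ never arises, and enlarging $K$ does not help. Nor does $\Phi''$: it vanishes for $Ku$, and for $K(u-\inf u)^2$ one has $\Phi''=2K$, which need not dominate $2a\Phi'=4aK(u-\inf u)$.

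What is missing is a genuine source of a positive multiple of $F^{11}u_1^2$. The paper obtains it by using $\nu\mu^Ae^{Ku}$ with $A$ large. By Lemma \ref{hessianMmu}, $(A-1)F^{ij}(\log\mu)_{ij}$ contributes $(A-1)F^{ii}u_i^2$, which absorbs all the bad $F^{ii}u_i^2$ terms, and the contradiction then comes from $mHK\nu>C\nu$. The price is that $\nabla u$ is no longer an eigenvector, so the term $\frac{2}{\mu\nu^3}\langle\nabla\mu,\nabla u\rangle\sum_{j\in J}F^{jj}\lambda_ju_j^2$ has to be handled through Lemma \ref{lem epsilon C1 max dirichlet}.

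Alternatively, your test function can be rescued with the term you dropped. For $K>\sup|a|$ one has $\lambda_1<0$, and the Ivochkina--Lin--Trudinger inequality (\ref{est C2b ineg ILT1}) with $k=1$ gives $\sum_{i\geq2}F^{ii}\lambda_i^2\geq C_0^{-1}(K-a)^2\nu^2F^{11}$. This is quadratic in $K$ and beats $(1+2|a|K+a^2)\nu^2F^{11}$ for $K$ large, after which $mHK\nu>C\nu$ gives the contradiction. The cone inequality $F^{11}\geq c\sum_iF^{ii}$ that you invoke is correct, but it does not address this issue.
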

The rest of the section is dedicated to the proof of that proposition. We will prove that there exist two constants $A$ and $K$ such that the function
$$\varphi=\nu\mu^A e^{Ku}$$
reaches its maximum at the boundary $\partial M.$ We still do computations on $M$, using the Levi-Civita connection on $M.$ We suppose by contradiction that $\varphi$ reaches its maximum at an interior point $x_0\in M.$ We have, at $x_0,$ $(\log\varphi)_i=0,$ that is
\begin{equation}\label{gradient extr condition}
(A-1)(\log\mu)_i+(\log(\mu\nu))_i+Ku_i=0
\end{equation}
for $i=1,\ldots,n$ (the extremum condition) and $F^{ij}(\log\varphi)_{ij}\leq 0,$ that is
\begin{equation}\label{gradient max condition}
(A-1)F^{ij}(\log\mu)_{ij}+F^{ij}(\log(\mu\nu))_{ij}+KF^{ij}u_{ij}\leq 0
\end{equation}
(the maximum condition). We compute successively each one of the three terms of (\ref{gradient max condition}). Let us fix an orthonormal basis of principal directions of $M$ at $x_0$ and note that $(F^{ij})_{ij}=(\sigma_{m-1,i}\delta_{ij})_{ij}$ is diagonal in that basis.
\begin{lem}
The first term of (\ref{gradient max condition}) is given by
\begin{equation}\label{gmct1}
F^{ii}(\log \mu)_{ii}=(n-m+1)\sigma_{m-1}+F^{ii}u_i^2+\frac{m}{\mu\nu}H\langle\nabla\mu,\nabla u\rangle-\frac{1}{\mu^2}F^{ii}\mu_i^2.
\end{equation}
\end{lem}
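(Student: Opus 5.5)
We compute $(\log\mu)_{ii}=\frac{\mu_{ii}}{\mu}-\frac{\mu_i^2}{\mu^2}$ and contract with the diagonal matrix $(F^{ii})=(\sigma_{m-1,i})$ in the orthonormal basis of principal directions at $x_0$. The second piece already gives the last term $-\frac{1}{\mu^2}F^{ii}\mu_i^2$ of (\ref{gmct1}). For the first piece we use Lemma \ref{hessianMmu}, which in an orthonormal frame gives
$$\frac{\mu_{ii}}{\mu}=1+u_i^2+\frac{1}{\mu\nu}\langle\nabla\mu,\nabla u\rangle\,h_{ii}.$$
Contracting with $F^{ii}$ produces three sums:
$$\sum_iF^{ii},\qquad \sum_iF^{ii}u_i^2,\qquad \frac{1}{\mu\nu}\langle\nabla\mu,\nabla u\rangle\sum_iF^{ii}h_{ii}.$$

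These are handled with the algebraic identities for elementary symmetric functions recalled in Appendix \ref{appendix properties Hm}. The first is the trace identity $\sum_i\sigma_{m-1,i}=(n-m+1)\sigma_{m-1}$. The second sum stays as it is, giving the term $F^{ii}u_i^2$. For the third we use the Euler relation $\sum_i\sigma_{m-1,i}\lambda_i=m\,\sigma_m$, which holds because $\sigma_m$ is homogeneous of degree $m$, together with $h_{ii}=\lambda_i$.

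It remains to replace $\sigma_m$ by the prescribed curvature using the equation. Here the only delicate point is the normalization. $F$ was defined as the (unnormalized) sum of principal minors of order $m$, so $F=\sigma_m$. The equation (\ref{Dirichlet problem}) prescribes the normalized quantity $\mathcal{H}_m=\frac{m!(n-m)!}{n!}\sigma_m$. So to obtain exactly the coefficient $\frac{m}{\mu\nu}H$ claimed in (\ref{gmct1}), the symbol $H$ in the displayed formula must denote $\sigma_m$ at $x_0$, i.e.\ the prescribed function up to the constant $\binom{n}{m}$. If the paper's convention makes the constant visible, it is harmless for the subsequent estimate, since only the boundedness of $H$ matters there.

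I will also check the sign conventions of $h_{ij}$ against Lemma \ref{lem Gauss formula} and Lemma \ref{hessianMmu}. Both lemmas use the same $h$, built from the upward normal $N$, so no sign discrepancy arises.

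Collecting the terms gives (\ref{gmct1}). The proof is thus a direct substitution, and the only step requiring care is the bookkeeping of the normalization constant in $H$.
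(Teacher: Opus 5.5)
Your proposal is correct and is exactly the paper's argument: the paper only says the formula is a direct consequence of Lemma \ref{hessianMmu}. Your computation of $(\log\mu)_{ii}=\mu_{ii}/\mu-\mu_i^2/\mu^2$, combined with $\sum_i\sigma_{m-1,i}=(n-m+1)\sigma_{m-1}$ and $\sum_i\sigma_{m-1,i}\lambda_i=m\sigma_m$, is precisely what that one-line proof leaves implicit. Your normalization remark is also accurate: in this section the paper tacitly identifies $F=\sigma_m$ with $H$ (compare (\ref{gmct4})), and the two symmetric-function identities you use are standard facts rather than ones recalled in Appendix \ref{appendix properties Hm}, which only treats ellipticity and concavity.
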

\begin{proof}
This is a direct consequence of Lemma \ref{hessianMmu}.
\end{proof}
\begin{lem}
The second term of (\ref{gradient max condition}) is computed thanks to the following formulas:
\begin{equation}\label{gmct2}\frac{1}{\mu\nu}F^{ij}(\mu\nu)_i(\mu\nu)_j=\frac{1}{\mu\nu^3}\langle\nabla\mu,\nabla u\rangle^2F^{ii}u_i^2-\frac{2}{\nu^2}\langle\nabla\mu,\nabla u\rangle F^{ii}\lambda_iu_i^2+\frac{\mu}{\nu}F^{ii}\lambda_i^2u_i^2;
\end{equation}
\begin{eqnarray}
F^{ij}(\mu\nu)_{ij}&=&\left(-\frac{1}{\mu\nu^3}\langle\nabla\mu,\nabla u\rangle^2-\frac{\mu}{\nu}(1+|\nabla u|^2)+\frac{1}{\mu\nu}|\nabla \mu|^2\right)F^{ii}u_i^2\label{gmct3}\\
&&+\frac{2}{\mu\nu}\langle\nabla\mu,\nabla u\rangle F^{ii}u_i\mu_i-2F^{ii}\lambda_iu_i\mu_i+\mu\nu F^{ii}\lambda_i^2\nonumber\\
&&-m\langle\nabla\mu,\nabla u\rangle H+\mu\langle\nabla H,\nabla u\rangle.\nonumber
\end{eqnarray}
\end{lem}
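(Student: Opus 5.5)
The plan is to compute directly from Lemma \ref{lem nablamunu}, working in the orthonormal frame of principal directions at $x_0$, where $S(\nabla u)=\sum_i\lambda_iu_ie_i$ and $F^{ij}=\sigma_{m-1,i}\delta_{ij}$. Set $a:=\langle\nabla\mu,\nabla u\rangle$. Formula (\ref{nablamunu}) then reads
\begin{equation*}
(\mu\nu)_i=\left(-\frac{a}{\nu}+\mu\lambda_i\right)u_i .
\end{equation*}

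For (\ref{gmct2}) I only need to square this, multiply by $F^{ii}$, sum, and divide by $\mu\nu$. Expanding $\bigl(-\frac{a}{\nu}+\mu\lambda_i\bigr)^2u_i^2=\frac{a^2}{\nu^2}u_i^2-\frac{2a\mu}{\nu}\lambda_iu_i^2+\mu^2\lambda_i^2u_i^2$ gives the three terms of (\ref{gmct2}) directly.

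For (\ref{gmct3}) I will differentiate the covariant identity $\nabla(\mu\nu)=-\frac{1}{\nu}a\nabla u+\mu S(\nabla u)$ once more. This gives
\begin{equation*}
(\mu\nu)_{ij}=\frac{a\,\nu_j}{\nu^2}u_i-\frac{a_j}{\nu}u_i-\frac{a}{\nu}u_{ij}+\mu_jh_{ik}u^k+\mu h_{ik;j}u^k+\mu h_{ik}u^k_{\ j}.
\end{equation*}
Then I substitute the earlier results term by term.
\begin{itemize}
\item[(i)] Lemma \ref{lem Gauss formula} gives $u_{ij}=\nu h_{ij}-\frac{1}{\mu}(u_i\mu_j+u_j\mu_i)$.
\item[(ii)] Lemma \ref{hessianMmu} together with (i) gives $a_j=\mu_{jk}u^k+\mu_ku^k_{\ j}$. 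In the principal frame this is $\mu u_j(1+|\nabla u|^2)+\frac{a}{\nu}\lambda_ju_j+\nu\lambda_j\mu_j-\frac{1}{\mu}(\mu_j a+u_j|\nabla\mu|^2)$.
\item[(iii)] $\nu_j$ comes from $\nu=(\mu\nu)/\mu$, so $\nu_j=\frac{1}{\mu}(\mu\nu)_j-\frac{\nu}{\mu}\mu_j$.
\item[(iv)] For the term with $h_{ik;j}$, the Codazzi equation applies because $\widetilde{AdS}^{n,1}$ has constant curvature, so $h_{ik;j}=h_{ij;k}$. Contracting with $F^{ij}$ gives $F^{ij}h_{ij;k}=(\sigma_m)_{;k}=(\binom{n}{m}H)_k$. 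Up to the normalization in (\ref{def Hmxpq intro}), this produces $\mu\langle\nabla H,\nabla u\rangle$.
\end{itemize}

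After contracting with $F^{ij}$, I will use the standard identities $F^{ii}\lambda_i=m\sigma_m$ (which gives $mH$ in the paper's normalization) and the diagonal form of $F$.

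The cross terms should then collect as follows.
\begin{itemize}
\item The $\lambda_i^2$ term is $\mu\nu F^{ii}\lambda_i^2$, coming from $\mu h_{ik}u^k_{\ i}$ through (i).
\item The $\lambda_iu_i\mu_i$ terms combine to $-2F^{ii}\lambda_iu_i\mu_i$.
\item The terms in $a\,u_i\mu_i$ come from (ii) and (iii) and give $\frac{2}{\mu\nu}aF^{ii}u_i\mu_i$.
\item The coefficient of $F^{ii}u_i^2$ collects $-\frac{a^2}{\mu\nu^3}$ from (iii), $-\frac{\mu}{\nu}(1+|\nabla u|^2)$ and $\frac{1}{\mu\nu}|\nabla\mu|^2$ from (ii).
\item The term $-\frac{a}{\nu}F^{ij}\nu h_{ij}=-maH$ comes from (i).
\end{itemize}

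The main obstacle is bookkeeping. Several $\lambda_iu_i^2$ and $a\lambda_iu_i^2$ contributions must cancel: one comes from $\nu_j$, one from $a_j$, and one from $\mu h_{ik}u^k_{\ j}$. I will track each coefficient carefully, checking that the terms $\pm\frac{a}{\nu}F^{ii}\lambda_iu_i^2$ and $\frac{a^2}{\nu^2}\cdot\frac{1}{\mu\nu}$ balance to leave exactly the stated expression. I will also make sure to use the symmetry of $F^{ij}$ when symmetrizing $\mu_ju_i$ against $u_j\mu_i$.
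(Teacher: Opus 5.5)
Your proposal is correct and is essentially the paper's proof: differentiate the identity of Lemma \ref{lem nablamunu} once more, then substitute the Gauss formula (\ref{Gauss formula}), Lemma \ref{hessianMmu}, Lemma \ref{lem nablamunu} again for $\nu_j$, and Codazzi for $F^{ij}h_{ik;j}u^k$, using the same normalization convention $F^{ij}h_{ij;k}=H_k$ and $F^{ii}\lambda_i=mH$ that the paper uses. One bookkeeping correction for the write-up: the $a\,F^{ii}u_i\mu_i$ terms from (ii) and (iii) cancel each other, as do their $a\,F^{ii}\lambda_iu_i^2$ terms, so the surviving $\frac{2}{\mu\nu}aF^{ii}u_i\mu_i$ comes entirely from inserting (i) into $-\frac{a}{\nu}F^{ij}u_{ij}$, while $\mu h_{ik}u^k_{\ j}$ contributes only $\mu\nu F^{ii}\lambda_i^2-2F^{ii}\lambda_iu_i\mu_i$ and the term $-\frac{a^2}{\mu\nu^3}F^{ii}u_i^2$ survives rather than cancelling.
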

\begin{proof}
(\ref{gmct2}) is a direct consequence of Lemma \ref{lem nablamunu}. Let us prove (\ref{gmct3}). We have
$$(\mu\nu)_i=-\frac{1}{\nu}\langle\nabla\mu,\nabla u\rangle u_i+\mu h_{ik}u^k$$
and setting
$$B_{ij}=\left(-\frac{1}{\nu}\langle\nabla\mu,\nabla u\rangle u_i\right)_{;j}+\mu_j h_{ik}u^k$$
we have 
\begin{equation}\label{Fijmunupf}
F^{ij}(\mu\nu)_{ij}=F^{ij}B_{ij}+\mu(F^{ij}h_{ik;j}u^k+F^{ij}h_{ik}u^k_{j}).
\end{equation}
We first compute using (\ref{Gauss formula})
\begin{eqnarray*}
F^{ij}B_{ij}&=&\frac{1}{\nu^2}\langle\nabla\mu,\nabla u\rangle F^{ij}u_i\nu_j-\frac{1}{\nu}g_{kl}F^{ii}\left(\mu_i^ku^l+\mu^ku_i^l\right)u_i\\
&&+\frac{2}{\mu\nu}\langle\nabla\mu,\nabla u\rangle F^{ii}u_i\mu_i-m\langle\nabla\mu,\nabla u\rangle H+F^{ii}\lambda_iu_i\mu_i.
\end{eqnarray*}
By Lemma \ref{lem nablamunu} we have 
$$F^{ij}u_i\nu_j=-\frac{1}{\mu\nu}\langle\nabla\mu,\nabla u\rangle F^{ii}u_i^2+F^{ii}\lambda_iu_i^2-\frac{\nu}{\mu}F^{ii}u_i\mu_i,$$
and by Lemmas \ref{lem Gauss formula} and \ref{hessianMmu}
\begin{eqnarray*}
-\frac{1}{\nu}g_{kl}F^{ii}\left(\mu_i^ku^l+\mu^ku_i^l\right)u_i&=&\left(-\frac{\mu}{\nu}(1+|\nabla u|^2)+\frac{1}{\mu\nu}|\nabla\mu|^2\right)F^{ii}u_i^2-\frac{1}{\nu^2}\langle\nabla\mu,\nabla u\rangle F^{ii}\lambda_iu_i^2\\
&&+\frac{1}{\mu\nu}\langle\nabla\mu,\nabla u\rangle F^{ii}\mu_iu_i-F^{ii}\lambda_iu_i\mu_i.
\end{eqnarray*}
We finally get
\begin{eqnarray}
F^{ij}B_{ij}&=&\left(-\frac{1}{\mu\nu^3}\langle\nabla\mu,\nabla u\rangle^2-\frac{\mu}{\nu}(1+|\nabla u|^2)+\frac{1}{\mu\nu}|\nabla\mu|^2\right)F^{ii}u_i^2\nonumber\\
&&+\frac{2}{\mu\nu}\langle\nabla\mu,\nabla u\rangle F^{ii}u_i\mu_i-m\langle\nabla\mu,\nabla u\rangle H.\label{FijBijpf}
\end{eqnarray}
We then compute the last two terms in (\ref{Fijmunupf}): by the Codazzi equation we have
\begin{equation}\label{pf lem munuij eq1}
F^{ij}h_{ik;j}u^k=F^{ij}h_{ij;k}u^k=H_ku^k=\langle\nabla H,\nabla u\rangle,
\end{equation}
and (\ref{Gauss formula}) yields
\begin{equation}\label{pf lem munuij eq2}
\mu F^{ij}h_{ik}u^k_{j}=-2F^{ii}\lambda_iu_i\mu_i+\mu\nu F^{ii}\lambda_i^2.
\end{equation}
Inserting (\ref{FijBijpf}), (\ref{pf lem munuij eq1}) and (\ref{pf lem munuij eq2}) into (\ref{Fijmunupf}) we obtain the result (\ref{gmct3}). 
\end{proof}
\begin{lem}
The last term of (\ref{gradient max condition}) is given by
\begin{equation}\label{gmct4}
F^{ij}u_{ij}=-\frac{2}{\mu}F^{ii}u_i\mu_i+mH\nu.
\end{equation}
\end{lem}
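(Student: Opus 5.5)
The identity follows by contracting the Gauss formula of Lemma \ref{lem Gauss formula} with $F^{ij}$. No maximum principle or extremum condition at $x_0$ is needed: it holds pointwise on $M$. I will work at the point $x_0$ in the orthonormal basis of principal directions already fixed. There $(h_{ij})=\mathrm{diag}(\lambda_1,\ldots,\lambda_n)$ and $F^{ij}=\sigma_{m-1,i}\delta_{ij}$ is diagonal, so every contraction reduces to a sum over $i$ of $F^{ii}$ times the diagonal entry.

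\textbf{Step 1.} By (\ref{Gauss formula}) we have $u_{ij}=\nu h_{ij}-\frac{1}{\mu}(u_i\mu_j+u_j\mu_i)$, hence
$$F^{ij}u_{ij}=\nu F^{ij}h_{ij}-\frac{1}{\mu}F^{ij}(u_i\mu_j+u_j\mu_i).$$
Since $F^{ij}$ is symmetric (and diagonal in the chosen basis), the second contraction equals $\frac{2}{\mu}F^{ii}u_i\mu_i$. This gives the first term of (\ref{gmct4}).

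\textbf{Step 2.} For the term $F^{ij}h_{ij}$, $F$ is the sum of principal minors of order $m$, i.e.\ $\sigma_m$ of the shape operator, which is homogeneous of degree $m$ in $(h^i_j)$. Euler's relation then gives
$$F^{ij}h_{ij}=F_i^jh^i_j=\sum_i\sigma_{m-1,i}\lambda_i=m\,\sigma_m.$$
Using the equation $\mathcal H_m[u]=H$, with the same normalization convention as in (\ref{gmct1}) and (\ref{gmct3}) where the terms $mH$ appear (so that $F$ evaluated on $M$ is identified with $H$), this equals $mH$. Multiplying by $\nu$ yields the term $mH\nu$.

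The only point requiring care is this normalization bookkeeping in Step 2. Throughout the section $\sigma_m(\lambda)$ is identified with $H$; if the factor $\frac{m!(n-m)!}{n!}$ from (\ref{def Hmxpq intro}) were retained, it would only change $H$ by a fixed positive constant, which is harmless for the subsequent estimate. Everything else is a direct substitution.
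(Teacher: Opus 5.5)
Your proof is correct and is exactly the paper's argument, which it calls a direct consequence of the Gauss formula (\ref{Gauss formula}). You contract with $F^{ij}$, use symmetry for the cross terms, and apply Euler's relation $F^{ij}h_{ij}=m\sigma_m$, reading $m\sigma_m$ as $mH$. Your remark on the factor $\frac{m!(n-m)!}{n!}$ is accurate: the paper tacitly identifies $\sigma_m$ with $H$ throughout this section.
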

\begin{proof}
This is a direct consequence of Gauss formula (\ref{Gauss formula}).
\end{proof}
Inserting formulas (\ref{gmct1}), (\ref{gmct2}), (\ref{gmct3}) and (\ref{gmct4}) in (\ref{gradient max condition}) we obtain
\begin{eqnarray}
\left(-\frac{2}{\mu^2\nu^4}\langle\nabla\mu,\nabla u\rangle^2-\frac{1}{\nu^2}(1+|\nabla u|^2)+\frac{1}{\mu^2\nu^2}|\nabla \mu|^2+ (A-1)\right)F^{ii}u_i^2&&\label{gmct5}\\
+(A-1)F^{ii}\frac{\mu_i^2}{\mu^2}+F^{ii}\lambda_i^2\left(1-\frac{u_i^2}{\nu^2}\right)+\frac{2}{\mu\nu^3}\langle\nabla\mu,\nabla u\rangle F^{ii}\lambda_iu_i^2&&\nonumber\\
+mHK\nu+(A-1)(n-m+1)\sigma_{m-1}+m\frac{(A-1)}{\mu\nu}H\langle\nabla\mu,\nabla u\rangle&\leq&C\nu\nonumber
\end{eqnarray}
where we have used the formula (which is a direct consequence of (\ref{nablamunu}) and (\ref{gradient extr condition}))
$$2\left(\frac{1}{\mu^2\nu^2}\langle\nabla\mu,\nabla u\rangle-\frac{K}{\mu}\right)F^{ii}u_i\mu_i-\frac{2}{\mu\nu}F^{ii}\lambda_iu_i\mu_i=2(A-1)F^{ii}\frac{\mu_i^2}{\mu^2}$$
and the bound (consequence of (\ref{nabla f leq nablaH f}))
$$\left|-\frac{m}{\mu\nu}\langle\nabla\mu,\nabla u\rangle H+\frac{1}{\nu}\langle \nabla H,\nabla u\rangle\right|\leq C\nu.$$
Let us see how to balance the bad term
\begin{equation}\label{gradient bad term}
\frac{2}{\mu\nu^3}\langle\nabla\mu,\nabla u\rangle \sum_iF^{ii}\lambda_iu_i^2
\end{equation}
of the second line of (\ref{gmct5}). Let us first note that we may assume that $\sum_iF^{ii}\lambda_iu_i^2<0$ and $\langle\nabla\mu,\nabla u\rangle>0$: we have
\begin{equation}\label{gmct5-6}
\sum_iF^{ii}\lambda_iu_i^2=\sum_i(\sigma_{m-1}-\lambda_i\sigma_{m-2,i})\lambda_iu_i^2=\sigma_{m-1}\sum_i\lambda_iu_i^2-\sum_i\sigma_{m-2,i}\lambda_i^2u_i^2
\end{equation}
with, by (\ref{nablamunu}) and the maximum condition (\ref{gradient extr condition}), 
\begin{equation*}
\frac{1}{\nu}\sum_i\lambda_iu_i^2=\frac{1}{\nu}\langle S(\nabla u),\nabla u\rangle=-\frac{(A-1)}{\mu}\langle\nabla\mu,\nabla u\rangle-\left(K-\frac{1}{\mu\nu^2}\langle\nabla\mu,\nabla u\rangle\right)|\nabla u|^2;
\end{equation*}
that last expression may be assumed to be negative if $K=K(A)$ is chosen sufficiently large (noticing that $|\langle\nabla\mu,\nabla u\rangle|\leq C\nu^2$ and $|\nabla u|^2\geq \nu^2-1$ by (\ref{C1 norm nabla u leq mu nu})), which implies that (\ref{gmct5-6}) is negative. We moreover consequently assume that $\langle\nabla\mu,\nabla u\rangle>0$ (if not, the term (\ref{gradient bad term}) would be non-negative and could be dismissed in (\ref{gmct5})). So, setting $J=\{j|\ \lambda_j<0\}$ we only have to balance the negative contribution
$$\frac{2}{\mu\nu^3}\langle\nabla\mu,\nabla u\rangle\sum_{j\in J}F^{jj}\lambda_ju_j^2.$$
Using that $|u_j|\langle\nabla\mu,\nabla u\rangle\leq\mu^2\nu^3$ (since $|\nabla u|\leq \nu$ by (\ref{C1 norm nabla u leq mu nu}) and $|\nabla\mu|\leq \mu\nu|\nabla ^H\mu|$ with $|\nabla^H\mu|=|x|_H\leq\mu$ by (\ref{nabla f leq nablaH f})) and the inequality $2ab\leq \varepsilon a^2+\frac{1}{\varepsilon}b^2$ for some $\varepsilon>0$ that will be fixed later, we get
\begin{eqnarray}
\frac{2}{\mu\nu^3}\langle\nabla\mu,\nabla u\rangle \sum_{j\in J}F^{jj}|\lambda_j|u_j^2&\leq&2\mu\sum_{j\in J}F^{jj}|\lambda_j||u_j|\nonumber\\
&\leq& \varepsilon\sum_{j\in J}F^{jj}\lambda_j^2+\frac{1}{\varepsilon}\mu^2\sum_{j\in J}F^{jj}u_j^2.\label{ineq bad term epsilon C1 max dirichlet}
\end{eqnarray}
The last term on the right-hand side will be balanced by the term $(A-1)F^{ii}u_i^2$ in (\ref{gmct5}) if $A$ is chosen sufficiently large. The first term on the right-hand side will be balanced by the term $F^{ii}\lambda_i^2\left(1-\frac{u_i^2}{\nu^2}\right)$ in (\ref{gmct5}) thanks to the following lemma:
\begin{lem}\label{lem epsilon C1 max dirichlet}
There exists $\varepsilon=\varepsilon(n)>0$ such that
\begin{equation}\label{eqn lem epsilon C1 max dirichlet}
\sum_iF^{ii}\lambda_i^2\left(1-\frac{u_i^2}{\nu^2}\right)\geq\varepsilon\ \sum_{j\in J}F^{jj}\lambda_j^2.
\end{equation}
\end{lem}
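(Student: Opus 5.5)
The inequality (\ref{eqn lem epsilon C1 max dirichlet}) is purely algebraic at the point $x_0$. We have $F^{ii}=\sigma_{m-1,i}>0$ because $\lambda\in\Gamma_m$. Write $w_i:=1-\frac{u_i^2}{\nu^2}$. Since $\sum_iu_i^2=|\nabla u|^2\le\nu^2$ by (\ref{C1 norm nabla u leq mu nu}), every $w_i$ lies in $[0,1]$ and $\sum_i(1-w_i)\le 1$. The only property of the weights I will use is therefore:
\begin{quote}
\emph{at most one index $k$ can satisfy $w_k<\frac12$.}
\end{quote}
I plan a case distinction based on this, with $\varepsilon$ of the order of $\frac12$ times a constant depending only on $n$ (and $m$).

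\textbf{Easy cases.} If every $j\in J$ has $w_j\ge\frac12$, then, dropping the nonnegative terms with $i\notin J$,
$$\sum_iF^{ii}\lambda_i^2w_i\ \ge\ \frac12\sum_{j\in J}F^{jj}\lambda_j^2,$$
and we are done. This also covers the case where the exceptional index $k$ does not belong to $J$.

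\textbf{Remaining case.} There is a unique $k\in J$ with $w_k<\frac12$. All other indices have weight $\ge\frac12$, so
$$\sum_iF^{ii}\lambda_i^2w_i\ \ge\ \frac12\sum_{i\neq k}F^{ii}\lambda_i^2.$$
The terms with $j\in J\setminus\{k\}$ are already controlled. It remains to prove the purely algebraic estimate
\begin{equation*}
F^{kk}\lambda_k^2\ \le\ C(n,m)\sum_{i\neq k}F^{ii}\lambda_i^2\qquad\mbox{for }\lambda\in\Gamma_m,\ \lambda_k<0.
\end{equation*}
Then $\varepsilon=\frac{1}{2(1+C)}$ works.

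\textbf{Plan for the algebraic estimate.} Order the eigenvalues so that $\lambda_1=\max_i\lambda_i>0$; since $\lambda_k<0$, we have $1\neq k$. I will compare $F^{kk}\lambda_k^2$ with $F^{11}\lambda_1^2$ alone, in three steps.
\begin{enumerate}
\item Since $\sigma_1>0$, we get $|\lambda_k|\le(n-1)\lambda_1$.
\item Split both derivatives through the $(m-2)$-nd and $(m-1)$-st functions of $\lambda$ with $\lambda_1,\lambda_k$ removed:
$$F^{kk}=\sigma_{m-1,1k}+\lambda_1\sigma_{m-2,1k},\qquad F^{11}=\sigma_{m-1,1k}+\lambda_k\sigma_{m-2,1k}.$$
\item Use the admissibility condition
$$0<\sigma_m=\lambda_1\lambda_k\sigma_{m-2,1k}+(\lambda_1+\lambda_k)\sigma_{m-1,1k}+\sigma_{m,1k},$$
together with the fact that $(\lambda_i)_{i\neq 1,k}\in\Gamma_{m-2}$. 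The aim is to show that $\sigma_{m-1,1k}$ dominates $|\lambda_k|\sigma_{m-2,1k}$ up to a constant. This would give $F^{11}\ge c\,F^{kk}$ with $c=c(n,m)>0$.
\end{enumerate}
Combining step 1 with $F^{11}\ge cF^{kk}$ gives $F^{kk}\lambda_k^2\le C\,F^{11}\lambda_1^2$, which is the required estimate.

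As a first check I will treat $m=2$ by hand, where $F^{ii}=\sigma_1-\lambda_i$. There the inequality should follow from $\sigma_2>0$, which forces
$$\sigma_1-\lambda_1\ \ge\ c\,(\sigma_1+|\lambda_k|)$$
when $\lambda_k<0$.

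\textbf{Main obstacle.} The step I expect to be hardest is the uniform bound $F^{11}\ge c\,F^{kk}$, i.e. that the derivative in the largest direction is not degenerate relative to the derivative in a negative direction. If the direct manipulation in step 3 fails, I would fall back on the standard $\Gamma_m$ inequalities recalled in Appendix \ref{appendix properties Hm}:
$$\lambda_1\sigma_{m-1,1}\ \ge\ \frac{m}{n}\,\sigma_m,\qquad \sigma_{m-1,i}\ \le\ C\,\sigma_{m-1},$$
together with the identity
$$\sum_iF^{ii}\lambda_i^2=\sigma_1\sigma_m-(m+1)\sigma_{m+1},$$
to bound $F^{kk}\lambda_k^2$ by the sum $\sum_{i\neq k}F^{ii}\lambda_i^2$ directly, rather than by $F^{11}\lambda_1^2$ alone.
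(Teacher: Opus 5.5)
\textbf{Verdict: the reduction is right, but the algebraic core is not proved.} Your reduction is correct and is the same as the paper's. The paper cites \cite[Lemma 4.6]{Bay2}, which uses the same observation: $\sum_i u_i^2\le\nu^2$, so at most one weight $1-u_i^2/\nu^2$ is below $\tfrac12$. This reduces everything to
$$F^{kk}\lambda_k^2\le C\sum_{i\neq k}F^{ii}\lambda_i^2\qquad(\lambda\in\Gamma_m,\ \lambda_k<0),$$
which is exactly the Ivochkina--Lin--Trudinger inequality the paper invokes. Your handling of all of $J$ at once, giving $\varepsilon=\frac{1}{2(1+C)}$, is fine; the paper sums per-$j$ inequalities and gets $c/|J|$. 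The gap is your own proof of that algebraic inequality.

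\textbf{The intermediate claim $F^{11}\ge c\,F^{kk}$ is false.} Take $n=3$, $m=2$, $\lambda=(R,1,-\tfrac12)$ with $R>1$. Then $\sigma_1=R+\tfrac12>0$ and $\sigma_2=\tfrac{R-1}{2}>0$, so $\lambda\in\Gamma_2$ and $\lambda_1=R$ is the largest eigenvalue. But $F^{11}=\sigma_1-\lambda_1=\tfrac12$, while $F^{33}=\sigma_1-\lambda_3=R+1$, so no uniform $c>0$ exists.

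Your ``$m=2$ check'' $\sigma_1-\lambda_1\ge c(\sigma_1+|\lambda_k|)$ fails on the same example. Step 3's target does hold there: $\sigma_{m-1,13}=1\ge 2\cdot|\lambda_3|\sigma_{m-2,13}$. The implication ``this would give $F^{11}\ge cF^{kk}$'' is the non sequitur. Since $F^{kk}=\sigma_{m-1,1k}+\lambda_1\sigma_{m-2,1k}$ contains $\lambda_1$ rather than $\lambda_k$, dominating $|\lambda_k|\sigma_{m-2,1k}$ only yields $F^{11}\gtrsim\sigma_{m-1,1k}$.

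In this example the product estimate $F^{33}\lambda_3^2\le C F^{11}\lambda_1^2$ still holds ($\tfrac{R+1}{4}$ versus $\tfrac{R^2}{2}$). It holds only because $\lambda_1^2\gg\lambda_3^2$ compensates for $F^{11}\ll F^{33}$. So the two factors cannot be bounded separately, as your steps 1 and 3 try to do.

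\textbf{The fallback is not an argument.} It lists standard tools but does not say how they combine. Appendix~\ref{appendix properties Hm} also does not contain those inequalities; it only records ellipticity and concavity.

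\textbf{How to close the gap.} Cite (\ref{est C2b ineg ILT1}), i.e.\ \cite[ineq.~(26)]{LT}:
$$\sigma_{m-1,k}\lambda_k^2\le\lambda_k\sigma_m+C_0\sum_{i\ne k}\sigma_{m-1,i}\lambda_i^2 .$$
Since $\lambda_k<0$ and $\sigma_m>0$, the first term is negative, which gives your required estimate with $C=C_0(n,m)$. With that citation your proof is complete and coincides with the paper's.
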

\begin{proof}
It relies on the following inequality of Ivochkina, Lin and Trudinger \cite[ineq. (26)]{LT} (see also (\ref{est C2b ineg ILT1}) below): 
if $\lambda_{j}\leq 0,$
$$\sum_{i\neq j}\sigma_{m-1,i}\lambda_i^2\geq c\sigma_{m-1,j}\lambda_j^2$$
where $c=c(n)$ is a small constant. That inequality in turn implies that, for all $j$ such that $\lambda_j\leq 0,$
$$\sum_iF^{ii}\lambda_i^2\left(1-\frac{u_i^2}{\nu^2}\right)\geq c F^{jj}\lambda_j^2.$$
See \cite[Lemma 4.6]{Bay2} for the proof. The sum of these inequalities for $j\in J$ gives the result with $\varepsilon=c/k$ where $k\geq 1$ is the cardinal of $J.$
\end{proof}
So the inequalities (\ref{ineq bad term epsilon C1 max dirichlet}) and (\ref{eqn lem epsilon C1 max dirichlet}) inserted in (\ref{gmct5}) yield
\begin{eqnarray}
\left(-\frac{2}{\mu^2\nu^4}\langle\nabla\mu,\nabla u\rangle^2-\frac{1}{\nu^2}(1+|\nabla u|^2)+\frac{1}{\mu^2\nu^2}|\nabla \mu|^2+ (A-1)-\frac{1}{\varepsilon}\mu^2\right)F^{ii}u_i^2&&\label{gmct6}\\
+(A-1)F^{ii}\frac{\mu_i^2}{\mu^2}+mHK\nu+(A-1)(n-m+1)\sigma_{m-1}+m\frac{(A-1)}{\mu\nu}H\langle\nabla\mu,\nabla u\rangle&\leq&C\nu.\nonumber
\end{eqnarray}
Finally, using that $1\leq \mu\leq c_1$ and $\left|\langle\nabla\mu,\nabla u\rangle\right|$, $|\nabla u|^2,$ $|\nabla \mu|^2\leq c_2\nu^2$ for some constants $c_1,c_2,$ we see that (\ref{gmct6}) is impossible if $A$ first, and then $K=K(A)$, are chosen sufficiently large.

\subsection{The gradient estimate on the boundary}
The lower gradient estimate on the boundary relies on the construction of a convenient lower barrier function. It is inspired by the construction in \cite{De}. We need to suppose here that $\Omega\subset B^n$ is strictly convex with respect to $g_{S}$ and contains the center $0$ of $B^n$ (and is therefore also strictly convex with respect to $g_H$), and that the Dirichlet data $\varphi:\overline{\Omega}\rightarrow\R$ is a spacelike function whose graph $M_{\varphi}=\{(x,\varphi(x)),\ x\in\overline{\Omega}\}$ satisfies the convexity assumptions (H1) and (H2). If $\mbox{Isom}^o(\widetilde{AdS}^{n,1})$ stands for the identity component of the isometry group of $\widetilde{AdS}^{n,1}=B^n\times\R,$ we say that $f\in\mbox{Isom}^o(\widetilde{AdS}^{n,1})$ is \emph{a chart adapted to an interior point $x_0\in M_{\varphi}\backslash\partial M_{\varphi}$} if $f(x_0)=(0,0)\in B^n\times\R$ and $df_{x_0}(T_{x_0}M_{\varphi})=T_0B^n\times\{0\}\simeq\R^n,$ and that it is \emph{a chart adapted to a boundary-point $x_0\in\partial M_{\varphi}$} if
\begin{equation}\label{cond f chart adapted boundary point}
f(x_0)=(0,0),\ df_{x_0}(T_{x_0}M_{\varphi})=\R^n\hspace{.3cm}\mbox{and}\hspace{.3cm}df_{x_0}(T_{x_0}\partial M_{\varphi})=\R^{n-1}\times\{0\}.
\end{equation}
The convexity assumptions on $\Omega,$ $M_{\varphi}$ and $\partial M_{\varphi}$ above guarantee the following result:
\begin{lem}\label{est C1 lem Omega convex}
For every chart $f$ adapted to a point of $M_{\varphi}$, the hypersurface $f(M_{\varphi})$ is a graph above a strictly convex subset $\overline{\Omega}_f$ of $B^n,$ with respect to the hyperbolic metric $g_H$; moreover $0$ belongs to $\overline{\Omega}_f$ and the graph is strictly convex with respect to the metric of $\widetilde{AdS}^{n,1}.$ 
\end{lem}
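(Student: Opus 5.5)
The plan is to split the statement into three claims: $f(M_{\varphi})$ is a graph over some $\overline{\Omega}_f$; $\overline{\Omega}_f$ is strictly convex for $g_H$ and contains $0$; and the graph is strictly convex in $\widetilde{AdS}^{n,1}$.

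The last claim is immediate. $f\in\mbox{Isom}^o(\widetilde{AdS}^{n,1})$ preserves the time orientation, so it sends the upward normal of $M_{\varphi}$ to the upward normal of $f(M_{\varphi})$. It also preserves second fundamental forms, so (H1) transfers directly. In the same way, the normal bundle of $f(\partial M_{\varphi})$, the inward-directed spacelike normals and $II_{\partial}$ are all carried by $df$, so (H2) holds for $f(M_{\varphi})$ as well.

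For the graph property, I use the causal structure described in the introduction. Since $g=\mu^2(g_S-dt^2)$, a connected compact spacelike hypersurface with boundary is the graph of a function over its projection to $B^n$ exactly when every vertical line meets it at most once. I would deduce this from acausality of $f(M_{\varphi})$, which holds because $M_{\varphi}$ is an acausal spacelike graph and $f$ preserves causality (it is a conformal-causal diffeomorphism). Any two points on a common vertical line are timelike related, which is impossible in an acausal set. So the vertical projection is injective on $f(M_{\varphi})$, and it is a local diffeomorphism because $f(M_{\varphi})$ is spacelike. Hence $f(M_{\varphi})$ is the graph of a spacelike function over a compact set $\overline{\Omega}_f$ with $C^{2}$ boundary. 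In each of the two cases of the definition $f(x_0)=(0,0)$, so $0\in\overline{\Omega}_f$.

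The main obstacle is the convexity of $\overline{\Omega}_f$ with respect to $g_H$. I would argue pointwise at each boundary point $y\in\partial\Omega_f$, using the argument of Proposition \ref{prop intro necessary cond} in reverse. Let $\psi$ be the function whose graph is $f(M_{\varphi})$. I would write $\psi_{ij}$ tangentially, in a $g_S$-orthonormal frame adapted to $\partial\Omega_f$ as in (\ref{expr uij diag}), as
\[
(\psi_{ij})_{i,j\leq n-1}=-\psi_n\,\mbox{diag}(\kappa'_i)+(\mbox{hessian of }\psi|_{\partial\Omega_f}).
\]
The $(n-1)\times(n-1)$ tangential block of $A^{-1}(p)q$ then encodes the second fundamental form of $\partial M$ paired with the spacelike normal direction obtained by moving horizontally outward. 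More precisely, $\langle II_{\partial M},n\rangle$ for the horizontal inward normal $n$ (the normal to $\partial M$ lying in $T\partial\Omega_f^{\perp}\times\{0\}$, corrected by a multiple of the time direction) equals a positive multiple of $\mu\bigl(II^S_{\partial\Omega_f}-(\log\mu)_n g_S\bigr)=II^H_{\partial\Omega_f}$ by (\ref{II partial Omega gS gH}). This uses the computation of the second fundamental form in Appendix \ref{appendix computations}.

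I then need to check that this horizontal normal is inward-directed, i.e. $\langle n,N_1\rangle>0$. The inner conormal $N_1$ of $\partial M$ in $M$ projects to the inner normal of $\partial\Omega_f$, which should give this. By (H2), $II^H_{\partial\Omega_f}>0$, so $\overline{\Omega}_f$ is strictly $g_H$-convex. I expect the technical difficulty to be verifying that the spacelike normal in $(T\partial M)^{\perp}$ orthogonal to $e_{n+1}$, which is purely horizontal in the normal plane, is indeed spacelike and inward. Global convexity then follows from local strict convexity of the boundary together with connectedness of $\overline{\Omega}_f$, which holds because $\overline{\Omega}_f$ is homeomorphic to $M_{\varphi}\simeq\overline{\Omega}$.
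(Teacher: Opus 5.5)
There is a genuine gap in the step you yourself single out. Checking the horizontal normal is the easy part. Since $g=g_H-\mu^2dt^2$ has no cross terms, the $g_H$-unit inward normal $n'$ of $\partial\Omega_f$ is $g$-orthogonal to $T\partial M$ with no time correction. It is spacelike, and $\langle n',N_1\rangle>0$ because $N_1$ projects to an inward vector.

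What is false is your claim that $\langle II_{\partial M},n'\rangle$ is a positive multiple of $II^H_{\partial\Omega_f}$. Write $\psi$ for the function whose graph is $f(M_{\varphi})$, as you do, and $F_*X=X+d\psi(X)e_{n+1}$. Vertical lines are not geodesics: $\widetilde{\nabla}_{e_{n+1}}e_{n+1}=\mu\nabla^H\mu$. Using this, one finds, as in the appendix,
\[
\langle II_{\partial M}(F_*X,F_*X),n'\rangle=II^H_{\partial\Omega_f}(X,X)+\mu\,\mu_{n'}\,d\psi(X)^2 .
\]
So (H2) only yields $II^H_{\partial\Omega_f}(X,X)>-\mu\,\mu_{n'}\,d\psi(X)^2$. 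You therefore need $\mu_{n'}=g_H(\nabla^H\mu,n')\leq 0$ on $\partial\Omega_f$, i.e.\ that $0$ lies on the inner side of every tangent hyperplane of $\partial\Omega_f$. Curvature conditions at a boundary point say nothing about this sign. Lemma \ref{app mun negatif} provides it only once $\overline{\Omega}_f$ is known to be convex and to contain $0$, which is what you are proving, so the pointwise argument is circular. A symptom is that your proof never uses the hypotheses that $\Omega$ is strictly $g_S$-convex and contains $0$.

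The missing idea is a continuity argument over charts, which is what the paper does. Let $\mathcal{C}$ be the set of $f\in\mbox{Isom}^o(\widetilde{AdS}^{n,1})$ with $f(x_0)=(0,0)$ for some $x_0\in M_{\varphi}$; it is connected. Let $\mathcal{C}_0\subset\mathcal{C}$ be the subset where $\overline{\Omega}_f$ is strictly convex.
\begin{itemize}
\item $\mathcal{C}_0$ is nonempty: the vertical translation by $-\varphi(0)$ gives $\overline{\Omega}_f=\overline{\Omega}$, and this is where the hypotheses on $\Omega$ enter.
\item $\mathcal{C}_0$ is open, since strict convexity is an open condition.
\item $\mathcal{C}_0$ is closed: a limit $\overline{\Omega}_f$ of strictly convex sets is convex and contains $0$, hence $\mu_{n'}\leq 0$. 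The identity above combined with (H2) (Lemma \ref{app lem ff}) then upgrades convexity to strict convexity.
\end{itemize}
Your other steps are fine: the transfer of (H1)--(H2) by $f$, the graph property via acausality, and $0\in\overline{\Omega}_f$. With this argument your local-to-global convexity step becomes unnecessary.
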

\begin{proof}
Since a chart $f$ adapted to a point $x_0\in M_{\varphi}$ is by definition an isometry of $\widetilde{AdS}^{n,1}$ such that $f(x_0)=(0,0)$, the facts that $0$ belongs to $\overline{\Omega}_f$ and that $f(M_{\varphi})$ is strictly convex with respect to the metric of $\widetilde{AdS}^{n,1}$ are obvious. We now prove that $\overline{\Omega}_f$ is strictly convex. Let us consider the set 
$$\mathcal{C}:=\left\{f\in \mbox{Isom}^o(\widetilde{AdS}^{n,1}) \mbox{ such that }f(x_0)=(0,0)\mbox{ for some }x_0\in M_{\varphi}\right\}$$
and its subset
$$\mathcal{C}_0:=\left\{f\in\mathcal{C}:\ \overline{\Omega}_f \mbox{ is strictly convex}\right\}.$$
$\mathcal{C}$ is a connected set of isometries of $\widetilde{AdS}^{n,1}$. This is a consequence of the following two observations: for every path $\gamma:[0,1]\rightarrow M_{\varphi}\subset\widetilde{AdS}^{n,1}$ there is a path $(f_t)_{t\in[0,1]}$ in $\mbox{Isom}^o(\widetilde{AdS}^{n,1})$ such that $f_t(\gamma(t))=(0,0)$ for all $t\in[0,1]$, and the subset of isometries of $\mbox{Isom}^o(\widetilde{AdS}^{n,1})$ which fix a given point is connected. Its subset $\mathcal{C}_0$ is:
\\- \emph{not empty:} since $\Omega$ is strictly convex and contains $0,$ for $x_0=(0,\varphi(0)),$ the isometry $f=id-\varphi(0)e_{n+1}$ is such that $f(x_0)=(0,0)$ and belongs to $\mathcal{C}_0$; 
\\- \emph{open in $\mathcal{C}$:} the strict convexity of $\Omega_f$ is an open condition; 
\\- \emph{closed in $\mathcal{C}$:} let us note that if $\overline{\Omega}$ is convex and $0\in\overline{\Omega},$ and if $n'\in\R^n$ stands for the unit vector field normal to $\partial\Omega$ and inward-directed, the inequality
$$II_{\partial\Omega}^{H}(X,X)\geq \langle II_{\partial M}(F_*X,F_*X),n'\rangle$$
holds for all $X\in T\partial\Omega$, where $II_{\partial M}$ is the second fundamental form of $\partial M$ in $\widetilde{AdS}^{n,1}$ and $F:\partial\Omega\rightarrow\partial M$ is the natural parametrization; this is proved in Lemma \ref{app lem ff} in the appendix. That proves that $\overline{\Omega}$ is in fact strictly convex (by the assumption (H2)), and implies that $f\in\overline{\mathcal{C}_0}$ in fact belongs to $\mathcal{C}_0$, that is that $\mathcal{C}_0$ is closed in $\mathcal{C}$.

So we have $\mathcal{C}_0=\mathcal{C},$ which ends the proof of the lemma. 
\end{proof}
Recalling (\ref{cond f chart adapted boundary point}), we see that the set of charts adapted to some boundary-point of $M_{\varphi}$ is a bounded subset of isometries of $B^n\times\R,$ so that
$$\mathcal{K}:=\{f(M_{\varphi}),\ f \mbox{ is a chart adapted to some boundary-point of }M_{\varphi}\}$$
belongs to a compact subset of $B^n\times\R;$ we deduce that there exists a controlled constant $r\in (0,1)$ such that 
$$\mathcal{K}\subset \overline{B}(0,r)\times [-\pi/2+\delta,\pi/2-\delta]$$
with $\delta=\pi/2-2\arctan(r);$ see Lemma \ref{app lem estim u} in the appendix for the expression of $\delta$.

Let us fix $x_0\in\partial M_{\varphi},$ and, using a chart adapted to $x_0$, assume that $x_0=0\in B^n$, $T_{x_0}M_{\varphi}=\R^n,$ $T_{x_0}\partial M_{\varphi}=\R^{n-1}\times\{0\}$ and $e_n$ is normal to $\partial M_{\varphi}$ and tangent to $M_{\varphi}$ at $x_0,$ that is, $M_{\varphi}$ is the graph of $\varphi:\overline{\Omega}\subset B^n\rightarrow\R$ with $0\in\partial\Omega,$ $T_0(\partial\Omega)=\{x_n=0\}$, $\varphi(0)=0$ and $\partial_i\varphi(0)=0$ for $i=1,\ldots,n.$ By Lemma \ref{est C1 lem Omega convex}, $\overline{\Omega}$ is strictly convex with respect to the hyperbolic metric; it moreover belongs to a ball $\overline{B}(0,r)$ for some controlled constant $r\in (0,1)$. Let us consider a function $\psi: B^n\rightarrow\R$ whose graph is an equidistant hypersurface of curvature $\mathcal{H}_m[\psi]=c_0\geq\sup_{\overline{\Omega}}H$ and such that
$$\psi(0)=\varphi(0)=0,\ \partial_i\psi(0)=\partial_i\varphi(0)=0,\ i=1,\ldots,n-1\ \mbox{and}\ \partial_n\psi(0)<0.$$
An equidistant hypersurface is an hypersurface at a fixed lorentzian distance to some totally geodesic hypersurface of $\widetilde{AdS}^{n,1}.$ It is umbilical, and has therefore constant curvature $\mathcal{H}_m$ (which may be chosen arbitrarily in $(0,+\infty)$). See Appendix \ref{appendix equidistant} for an explicit description of these hypersurfaces. Let us consider the operator
\begin{equation}\label{def L C1 bord}
Lu(X,Y)=\nabla^{H}du(X,Y)+\frac{1}{\mu}X.\mu\ Y.u+\frac{1}{\mu}Y.\mu\ X.u-\mu d\mu(\nabla^{H}u)du(X)du(Y);
\end{equation}
it is such that
\begin{equation}\label{rel L h C1 bord}
(Lu)_{ij}=\mu\sqrt{1-|\nabla^S u|^2}(\delta_{ik}-u_iu_k)h^k_j
\end{equation}
where $(h^k_j)_{kj}$ is the shape operator of $\mbox{graph}(u)$. This is a consequence of (\ref{intro def aij xpq}) and (\ref{intro def Su}), together with the relation $g_H=\mu^2g_S$ which implies
$$\nabla^Hdu(X,Y)=\nabla^S du(X,Y)-\frac{1}{\mu}\left(d\mu(X)du(Y)+d\mu(Y)du(X)-g_S(X,Y)du(\nabla^S\mu)\right)$$
with $\nabla^H\mu=1/\mu^2\ \nabla^S\mu.$
We have the following properties:
\begin{enumerate}
\item $L\varphi$ is positive definite on $\overline{\Omega}$ by (\ref{rel L h C1 bord}) since $\mbox{graph}(\varphi)$ is strictly convex with respect to the metric of $\widetilde{AdS}^{n,1},$ i.e. $L\varphi\geq c\ g_S$ for some controlled constant $c>0;$
\item $(L\psi)_{ij}=c_0^{1/m}\mu\sqrt{1-|\nabla^S \psi|^2}(\delta_{ij}-\psi_i\psi_j)$ since the graph of $\psi$ is an equidistant hypersurface of curvature $\mathcal{H}_m[\psi]=c_0$ and is therefore umbilical with $h^k_j=c_0^{1/m}\delta^k_j$.
\end{enumerate}
So, in the sense of quadratic forms, we have
\begin{equation}\label{C1 bord L psi phi}
L\psi< L\varphi\hspace{.3cm}\mbox{on}\hspace{.2cm}\overline{\Omega}
\end{equation}
if $|\nabla^S\psi|$ is chosen close to 1 on $\overline{B}(0,r),$ which may be obtained by Lemma \ref{app norm nabla u tends 1} in the appendix (taking $|\beta|$ large in that lemma, with $\beta<0$ so that $\partial_n\psi(0)<0$). Let us show that $\psi\leq\varphi$ on $\partial\Omega,$ so that $\psi$ is a lower barrier for the Dirichlet problem. We fix another point $x_1\in\partial\Omega$ and we want to show that $\psi(x_1)\leq\varphi(x_1).$ We assume by contradiction that $\psi(x_1)>\varphi(x_1)$. We consider a geodesic $\gamma:[0,t_1]\rightarrow\overline{\Omega}$ of $B^n$ such that $\gamma(0)=x_0=0$ and $\gamma(t_1)=x_1$ (geodesic with respect to the hyperbolic metric), and the function $f(t)=\varphi(\gamma(t))-\psi(\gamma(t)).$ We have $f(0)=0,$ $f'(0)>0$ and $f(t_1)<0,$ so that $f$ admits a positive interior maximum at some $t\in (0,t_1).$ We have $f'(t)=0$ and $f''(t)\leq 0,$ which yield, setting $\gamma_t=\gamma(t)$ and $\gamma'_t=\gamma'(t),$ 
\begin{equation}\label{C1 bord extr f}
d\varphi_{\gamma_t}(\gamma'_t)=d\psi_{\gamma_t}(\gamma'_t)
\end{equation}
and
\begin{equation}\label{C1 bord max f}
0\geq \nabla^H d\varphi_{\gamma_t}(\gamma'_t,\gamma'_t)- \nabla^H d\psi_{\gamma_t}(\gamma'_t,\gamma'_t)=L\varphi_{\gamma_t} (\gamma'_t,\gamma'_t)-L\psi_{\gamma_t} (\gamma'_t,\gamma'_t).
\end{equation}
For the last equality we used (\ref{def L C1 bord}), (\ref{C1 bord extr f}) and the fact that $\gamma'_t=\lambda_t\gamma_t$ for some $\lambda_t>0$ (geodesics from $x_0=0$ are pieces of straight lines) which implies
$$\nabla^H\mu(\gamma_t)=\gamma_t=\frac{1}{\lambda_t}\gamma'_t$$
(for all $x\in B^n,$ $\nabla^H\mu(x)=x,$ by a direct computation) and
\begin{eqnarray*}
d\mu_{\gamma_t}(\nabla^H\varphi(\gamma_t))=d\varphi_{\gamma_t}(\nabla^H\mu(\gamma_t))&=&\frac{1}{\lambda_t}d\varphi_{\gamma_t}(\gamma'_t)\\&=&\frac{1}{\lambda_t}d\psi_{\gamma_t}(\gamma'_t)=d\psi_{\gamma_t}(\nabla^H\mu(\gamma_t))=d\mu_{\gamma_t}(\nabla^H\psi(\gamma_t)).
\end{eqnarray*}
Inequality (\ref{C1 bord max f}) is a contradiction with (\ref{C1 bord L psi phi}).

We now conclude the estimate: since $\psi\leq\varphi$ on $\partial\Omega$ and $\mathcal{H}_m[\psi]=c_0\geq H$ in $\Omega,$ with $\psi$ admissible, every solution $u$ of the Dirichlet problem (\ref{Dirichlet problem}) satisfies $u\geq\psi$ on $\overline{\Omega}.$ Since $u(0)=\varphi(0)=\psi(0)$ we conclude that $u_n(0)\geq\psi_n(0),$ which yields the lower estimate on the boundary for the gradient of a solution. The upper estimate is straightforward: by the Maclaurin's inequality we have $\mathcal{H}_1[u]\geq c$ for some controlled positive constant $c$; if $\psi'$ is a spacelike function such that $\mathcal{H}_1[\psi']=c$ in $\Omega$ and $\psi'=\varphi$ on $\partial\Omega$ (obtained in \cite{Bar}), we have $\psi'\geq u$ on $\overline{\Omega}$, which yields the upper estimate of the gradient of $u$ on $\partial\Omega.$ That concludes the gradient estimate on the boundary, and therefore the $C^1$ estimate of a solution.

\section{The $C^2$ estimate}\label{section C2}
The aim of the section is to obtain the second estimate in (\ref{est C0 C2 meth res}). We first apply a maximum principle to reduce the estimate to an estimate of the second derivatives on the boundary, and then obtain the boundary estimates using barriers. The maximum principle for the second derivatives relies on the estimate of Urbas \cite{Ur}, and the estimates of the second derivatives on the boundary are adaptations of estimates of Ivochkina \cite{Iv1,Iv2} and Trudinger \cite{Tr95}, as in \cite{Bay1}. 

\subsection{Reduction to the $C^2$ estimate on the boundary} We follow Urbas \cite{Ur} very closely and do computations on the hypersurface $M.$ To stick to the notation used in \cite{Ur}, we write the equation of prescribed curvature in the form $\widetilde{F}=\psi$ with $\widetilde{F}=F^{\frac{1}{m}}$ and $\psi=H^{\frac{1}{m}},$ and take $m=2.$ Let us consider, for $x\in M$ and $\xi\in T_xM$ with $|\xi|=1,$
$$\widetilde{W}(x,\xi):=\eta(x)^\beta h_{ij}\xi^i\xi^j$$
where $\eta:M\rightarrow\R$ is a positive function and $\beta$ is a positive parameter that we will determine later. The function $\widetilde{W}$ attains its maximum at $x_0\in M,$ in the unit direction $\xi_0\in T_{x_0}M.$ We suppose that $x_0$ is an interior point of $M.$ Let us consider orthonormal vector fields $\widehat{e_1},\ldots,\widehat{e_n}\in\Gamma(TM)$ in a neighbourhood of $x_0$ such that $\nabla\widehat{e_j}(x_0)=0$ and $\widehat{e_1},\ldots,\widehat{e_n}$ is a basis of principal directions of $M$ at $x_0.$ We moreover assume that $\widehat{e_1}(x_0)=\xi_0.$ Note that $(h_{ij})_{ij}$ is diagonal at $x_0$, and assume that its eigenvalues satisfy $\lambda_1\geq\cdots\geq\lambda_n.$ Taking $\zeta=\widehat{e_1}$ the function $W(x)=\eta(x)^\beta h_{ab}\zeta^a\zeta^b(x)$ has an interior maximum at $x_0.$ Since the function $Z=h_{ab}\zeta^a\zeta^b$ is such that $Z_i=h_{11;i}$ and $Z_{ij}=h_{11;ij}$ at $x_0,$ the functions $Z$ and $h_{11}$ satisfy the same first and second order equations at that point. Therefore we have at $x_0$
\begin{equation}\label{ppemax C2 cond extr}
\frac{W_i}{W}=\beta\frac{\eta_i}{\eta}+\frac{h_{11;i}}{h_{11}} 
\end{equation}
for all $i=1,\ldots,n$, and, since $0\geq \widetilde{F}^{ij}(\log W)_{ij},$
\begin{equation}\label{ppemax C2 cond max}
0\geq \beta \widetilde{F}^{ij}\left(\frac{\eta_{ij}}{\eta}-\frac{\eta_i\eta_j}{\eta^2}\right)+\frac{1}{h_{11}}\widetilde{F}^{ij}h_{11;ij}-\frac{1}{h_{11}^2}\widetilde{F}^{ij}h_{11;i}h_{11;j}.
\end{equation}
The equation of prescribed curvature $\widetilde{F}=\psi$ differentiated twice reads
\begin{equation}\label{C2 max eq der 2}
\widetilde{F}^{kl,pq}h_{kl;i}h_{pq;j}+\widetilde{F}^{kl}h_{kl;ij}=\psi_{ij}.
\end{equation}
We will need the Ricci formula to exchange the indices of the second covariant derivative of the second fundamental form. It reads 
$$\nabla^2_{X,Y}h(Z,T)-\nabla^2_{Y,X}h(Z,T)=-h(R(X,Y)Z,T)-h(Z,R(X,Y)T)$$
where $R$ is the curvature tensor of $M$ given by
$$R(X,Y,Z,T)=R^0(X,Y,Z,T)-h(X,T)h(Y,Z)+h(X,Z)h(Y,T);$$
here $R^0$ is the curvature tensor of $\widetilde{AdS}^{n,1}$, given in terms of the metric $g$ of $\widetilde{AdS}^{n,1}$ by
$$R^0(X,Y,Z,T)=-g(X,T)g(Y,Z)+g(X,Z)g(Y,T).$$
We deduce from that formula and the Codazzi equations ($h_{ij;k}$ is symmetric in all the indices) that
\begin{eqnarray*}
h_{kl;ij}&=&h_{ij;kl}-g_{ik}h_{jl}+g_{lk}h_{ij}+g_{jl}h_{ik}-g_{ij}h_{lk}\\
&&-h_{ik}h^m_lh_{mj}+h_{lk}h_i^mh_{mj}+h_{jl}h_i^mh_{mk}-h_{ij}h^m_lh_{mk}.
\end{eqnarray*}
We deduce from (\ref{C2 max eq der 2}) that
\begin{eqnarray*}
\widetilde{F}^{kl}h_{ij;kl}&=&\left(\widetilde{F}-\lambda_i\sum_k\widetilde{F}^{kk}\right)\delta_{ij}-\widetilde{F}h_i^mh_{mj}\\
&&+\widetilde{F}^{kl}h^m_lh_{mk}h_{ij}-\widetilde{F}^{kl,pq}h_{pq;i}h_{kl;j}+\psi_{ij}
\end{eqnarray*}
and use that equation to replace the term $\widetilde{F}^{ij}h_{11;ij}$ in (\ref{ppemax C2 cond max}) (relabeling indices). We estimate $\frac{1}{\lambda_1}\widetilde{F},$ $-(n-1)\sigma_1/\widetilde{F},$ $-\widetilde{F}\lambda_1$ and $\psi_{11}/\lambda_1$ below by $-C(1+\lambda_1),$ and obtain 
\begin{eqnarray}
0&\geq& \beta \widetilde{F}^{ij}\left(\frac{\eta_{ij}}{\eta}-\frac{\eta_i\eta_j}{\eta^2}\right)+\sum_k\widetilde{F}^{kk}\lambda_k^2-C(1+\lambda_1)\label{ineg fin ppe max C2}\\
&&-\frac{1}{\lambda_1}\widetilde{F}^{kl,pq}h_{pq;1}h_{kl;1}-\frac{1}{\lambda_1^2}\widetilde{F}^{ij}h_{11;i}h_{11;j}.\nonumber
\end{eqnarray}
Note that this is exactly the key-inequality (2.8) of \cite{Ur}. It is known that a spacelike function $u:B^n\rightarrow\R$ in $\widetilde{AdS}^{n,1}$ satisfies $\sup_{B^n}u-\inf_{B^n}u<\pi$. In the context of the Dirichlet problem, we claim that there exist two constants $\alpha,\beta\in\R$ such that the graph of every solution of (\ref{Dirichlet problem}) belongs to $\overline{\Omega}\times[\alpha,\beta]$ with $\beta-\alpha\leq diam_S(\Omega)$ (the diameter of $\Omega$ with respect to the metric $g_S$). Recalling (\ref{C0 u phi d}), setting
$$\alpha=\inf_{x\in\overline{\Omega}}\sup_{y\in\partial\Omega}\left(\varphi(y)-d_S(x,y)\right)\hspace{.3cm}\mbox{and}\hspace{.3cm}\beta=\sup_{x\in\overline{\Omega}}\inf_{y\in\partial\Omega}\left(\varphi(y)+d_S(x,y)\right)$$
we have $\alpha\leq u(x)\leq\beta.$ The inequality $\beta-\alpha\leq diam_S(\Omega)$ will be obtained if we prove that for all $x,x'\in\overline{\Omega}$
$$\inf_{y\in\partial\Omega}\left(\varphi(y)+d_S(x,y)\right)-\sup_{y\in\partial\Omega}\left(\varphi(y)-d_S(x',y)\right)\leq diam_S(\Omega).$$
Since the term on the left-hand side is smaller than $d_S(x,y)+d_S(x',y)$ for all $y\in\partial\Omega,$ the inequality will be proved if we show that for all $x,x'\in\overline{\Omega}$ there exists $y\in\partial\Omega$ such that 
\begin{equation}\label{dem existence a ineq dS diam}
d_S(x,y)+d_S(x',y)\leq diam_S(\Omega).
\end{equation}
Let us consider a maximal geodesic $\gamma:I\rightarrow B^n$ of $(B^n,g_S)$ containing $x$ and $x'$ and such that $g_S(\gamma',\gamma')=1,$ and the points $y,y'\in\partial\Omega\cap\gamma$ such that, along $\gamma,$ $y\leq x\leq x'\leq y'$. We may assume that $d_S(x,y)=\min\left(d_S(x,y),d_S(x',y')\right).$ We obtain
\begin{eqnarray*}
d_S(x,y)+d_S(x',y)&=& d_S(x,y)+d_S(x',x)+d_S(x,y)\\
&\leq&d_S(x,y)+d_S(x',x)+d_S(x',y')=d_S(y,y')\leq diam_S(\Omega),
\end{eqnarray*}
which implies (\ref{dem existence a ineq dS diam}) and the existence of $\alpha$ and $\beta.$ So, taking $\mathcal{K}=\overline{\Omega}\times[\alpha,\beta]$ and $a\in\R$ such that $[\alpha,\beta]\subset (a,a+\pi)$, the graph of every solution of the Dirichlet problem (\ref{Dirichlet problem}) belongs to the compact subset  $\mathcal{K}\subset B^n\times (a,a+\pi)$. Let us consider now a linear form $L:\R^{n,2}\rightarrow\R$ and its restriction $\eta=L_{|\mathbb{H}^{n,1}}:\mathbb{H}^{n,1}\rightarrow\R$ to the quadric model (\ref{quadric model}) of Anti-de Sitter geometry: it is a solution of
\begin{equation}\label{est C2 ppe max eqn eta}
\widetilde{\nabla}d\eta=\eta g
\end{equation}
on $\mathbb{H}^{n,1}.$ Taking $L=X_{n+2}$ and composing $\eta=L_{|\mathbb{H}^{n,1}}$ by the covering map $\widetilde{AdS}^{n,1}\rightarrow\mathbb{H}^{n,1}$ we obtain the function $\mu\sin t$ on $\widetilde{AdS}^{n,1}=B^n\times\R.$ Since the vertical translation $t\mapsto t-a$ is an isometry of $\widetilde{AdS}^{n,1},$ the function $\eta=\mu\sin(t-a)$ is also a solution of (\ref{est C2 ppe max eqn eta}) on $\widetilde{AdS}^{n,1}$ and is moreover strictly positive on $B^n\times(a,a+\pi).$ Especially, there exists a positive constant $c_\mathcal{K},$ which only depends on the compact set $\mathcal{K}$ introduced above, such that $\eta\geq c_\mathcal{K}$ on $\mathcal{K}.$ Since $g\geq 0$ on spacelike graphs, we deduce that $\widetilde{\nabla}d\eta\geq c_\mathcal{K}\ g$ on the tangent vectors of $M$. 
\begin{lem}
We have the estimate
\begin{equation}\label{min Fijetaij ppe max C2}
\widetilde{F}^{ij}\eta_{ij}\geq c_\mathcal{K}\tau-c
\end{equation}
where $\tau=\sum_{i}\widetilde{F}^{ii}.$
\end{lem}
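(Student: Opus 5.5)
We compare the intrinsic Hessian of $\eta$ on $M$ with the ambient Hessian, exactly as in the proof of Lemma \ref{hessianMmu}. For $X,Y\in TM$ we have
$$\nabla_Xd\eta(Y)-\widetilde{\nabla}_Xd\eta(Y)=-d\eta(\nabla_XY-\widetilde{\nabla}_XY)=h(X,Y)\,d\eta(N).$$
Using (\ref{est C2 ppe max eqn eta}), this gives
$$\eta_{ij}=\eta\, g_{ij}+\langle\widetilde{\nabla}\eta,N\rangle h_{ij}.$$

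Contracting with $\widetilde{F}^{ij}$ in the orthonormal principal frame at $x_0$ yields
$$\widetilde{F}^{ij}\eta_{ij}=\eta\sum_i\widetilde{F}^{ii}+\langle\widetilde{\nabla}\eta,N\rangle\,\widetilde{F}^{ij}h_{ij}.$$
For the first term, $\eta\geq c_\mathcal{K}$ on $\mathcal{K}$ and the graph of $u$ lies in $\mathcal{K}$, so this term is at least $c_\mathcal{K}\tau$. For the second term, $\widetilde{F}=F^{1/m}$ is homogeneous of degree one in $(h_{ij})$, so Euler's relation gives $\widetilde{F}^{ij}h_{ij}=\widetilde{F}=\psi=H^{1/m}$. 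This is bounded by a controlled constant.

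It remains to bound $|\langle\widetilde{\nabla}\eta,N\rangle|$. We have $d\eta(N)=N.\eta$ with $N=\nu(\nabla^Su+e_{n+1})$ by (\ref{notation est C1 expr N}). Since $\eta=\mu\sin(t-a)$,
$$N.\eta=\nu\big(\sin(t-a)\,d\mu(\nabla^Su)+\mu\cos(t-a)\big).$$
This is bounded by $C\nu$, where $C$ depends only on $\sup_{\overline\Omega}(\mu+|\nabla^S\mu|)$. By the $C^1$ estimate of Section \ref{section C1}, $\nu\leq C_1$, so $|\langle\widetilde{\nabla}\eta,N\rangle|\leq C$. We conclude $\widetilde{F}^{ij}\eta_{ij}\geq c_\mathcal{K}\tau-c$ with $c=C\sup H^{1/m}$.

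The only point needing care is the sign convention in the Gauss formula $\nabla_XY-\widetilde{\nabla}_XY=-h(X,Y)N$ (with $\langle N,N\rangle=-1$), which gives the sign in front of $d\eta(N)h_{ij}$. This is the same convention as in (\ref{nabla2mueq1}). Since the second term is in any case bounded in absolute value, the sign does not affect the estimate. Positivity of $\widetilde{F}^{ii}$ (ellipticity on admissible functions) is used implicitly to write $\eta\tau\geq c_\mathcal{K}\tau$.
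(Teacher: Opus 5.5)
Your proof is correct and is essentially the paper's argument: write $\eta_{ij}=\widetilde{\nabla}d\eta(\widehat{e_i},\widehat{e_j})+h_{ij}\,d\eta(N)=\eta g_{ij}+h_{ij}\,d\eta(N)$, use Euler's relation $\widetilde{F}^{ij}h_{ij}=\widetilde{F}$, bound the normal term below by a constant, and use $\eta\geq c_\mathcal{K}$ together with $\widetilde{F}^{ii}>0$ for the main term. Your version also makes explicit that the bound on $d\eta(N)$ comes from $N.\eta=\nu\big(\sin(t-a)\,d\mu(\nabla^Su)+\mu\cos(t-a)\big)$ and the gradient estimate $\nu\leq C_1$, which the paper's ``we estimate $\widetilde{F}d\eta(N)\geq -c$'' leaves implicit.
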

\begin{proof}
Since $\eta_{ij}=\widetilde{\nabla}d\eta(\widehat{e_i},\widehat{e_j})+h_{ij}d\eta(N)$ we have $\widetilde{F}^{ij}\eta_{ij}=\widetilde{F}^{ij}\widetilde{\nabla}d\eta(\widehat{e_i},\widehat{e_j})+\widetilde{F}d\eta(N).$ We estimate $\widetilde{F}d\eta(N)\geq -c$ and
$$\widetilde{F}^{ij}\widetilde{\nabla}d\eta(\widehat{e_i},\widehat{e_j})=\sum_i\widetilde{F}^{ii}\widetilde{\nabla}d\eta(\widehat{e_i},\widehat{e_i})\geq c_\mathcal{K}\sum_i\widetilde{F}^{ii}g(\widehat{e_i},\widehat{e_i})=c_\mathcal{K}\tau.$$
\end{proof}
The inequalities (\ref{ineg fin ppe max C2}) and (\ref{min Fijetaij ppe max C2}) are exactly the inequalities (2.8) and (2.12) obtained by Urbas in \cite{Ur}. We may then obtain a bound of $\eta^\beta\lambda_1$ at $x_0$ following \cite{Ur} without any modification, if $\beta$ is chosen sufficiently large, which concludes the estimate.

\subsection{The $C^2$ estimate on the boundary}
 If $e_1^0,\ldots,e_n^0$ stands for the canonical basis of $\R^n,$ the frame $e_i:=\frac{1}{\lambda} e_i^0,$ $i=1,\ldots,n$ is orthonormal with respect to $g_S.$ Let us fix $x_0\in\partial\Omega$ and assume that $e_1,\ldots,e_{n-1}$ are tangent to $\partial\Omega$ at $x_0$ and that $e_n$ is the inner normal at that point. In that section we denote by $u_i,$ $u_{ij},$ ... the components in $e_i,\ i=1,\ldots n$ of the covariant derivatives of $u$ in $(B^n,g_S)$. The purpose of the section is to obtain a bound
 $$|u_{ij}(x_0)|\leq C,\ \ \forall i,j=1,\ldots n,$$
 for a constant $C$ which only depends on the Dirichlet data and on constants $M\geq 0$ and $\theta\in (0,1]$ such that $\sup_{\overline{\Omega}}|u|\leq M$ and $\sup_{\overline{\Omega}}|\nabla^S u|\leq 1-\theta.$ Let us note that the obtention of bounds on $u_{ij}(x_0),$ $1\leq i,j\leq n-1$ is straightforward. So we focus on the obtention of the estimates of the mixed second derivatives $u_{in}(x_0),$ $1\leq i\leq n-1$ and the double normal derivatives $u_{nn}(x_0).$ The following inequality due to Ivochkina, Lin and Trudinger \cite[ineq. (26)]{LT} will be essential for the estimates: there exists a constant $C_0=C_0(n,m)$ such that in $\Gamma_m$, and for all $k=1,\ldots,n,$
 \begin{equation}\label{est C2b ineg ILT1}
 \sigma_{m-1,k}\lambda_k^2\leq\lambda_k\sigma_m+C_0\sum_{i\neq k}\sigma_{m-1,i}\lambda_i^2.
 \end{equation}
 That inequality also takes the useful form
 \begin{equation}\label{est C2b ineg ILT2}
 \sigma_{m+1,k}\leq C_0\sum_{i\neq k}\sigma_{m-1,i}\lambda_i^2.
 \end{equation}
 Let us first introduce some notation. We consider the positive definite symmetric matrix 
\begin{equation}\label{est C2b def matrix P}
P(p)=\left(\gamma_{ij}\right)_{i,j}=\left(\delta_{ij}+\frac{p_ip_j}{w(1+w)}\right)_{i,j},
\end{equation}
where $w=\sqrt{1-|p|^2}.$ It is such that $P(p)^2=A(p)^{-1}$ and setting 
\begin{equation}\label{def bij xpq}
(b_{ij}(x,p,q))_{ij}:=P(p)\ q\  P(p)+d(\log\mu)_x(\sum_sp_s e_s)\ I
\end{equation}
and
\begin{equation}\label{def Fmxpq estC2b}
\mathcal{G}_m(x,p,q):=F_m((b_{ij}(x,p,q))_{ij}),
\end{equation}
the $m^{th}$ symmetric function of the eigenvalues of $(b_{ij}(x,p,q))_{ij},$ the equation of prescribed curvature $\mathcal{H}_m[u]=H$ reads
\begin{equation}\label{estC2b eqn Gm=f}
\mathcal{G}_m(x,\nabla^S u,\nabla^Sdu)=f(x,\nabla^S u)
\end{equation}
with
$$f(x,p)=\frac{n!}{m!(n-m)!}\mu^m(x)(1-|p|^2)^{\frac{m}{2}}H(x).$$
Here and below we use the notation $p=\nabla^S u\in B(0,1)\subset\R^n$ and $q=\nabla^Sd u\in S_n(\R)$ to denote the components of $\nabla^S u$ and $\nabla^Sdu$ in the orthonormal basis $(e_i)_{1\leq i\leq n}.$ We will need the following formulas (see \cite{GS1} for similar formulas in $\mathbb{H}^n$):
\begin{lem}\label{lem expr partial bij}
The partial derivatives of $b_{ij}$ are given by
$$\frac{1}{\lambda}\frac{\partial b_{ij}}{\partial x_t}(x,p,q)=\left(\nabla^S d(\log\mu)(e_t,\sum_sp_se_s)+d(\log\mu)(\sum_sp_s\nabla^S_{e_t}e_s)\right)\delta_{ij},$$
\begin{eqnarray*}
\frac{\partial b_{ij}}{\partial p_s}(x,p,q)&=&\frac{1}{1+w}\sum_{k,l,r}q_{kl}\left(p_r\gamma_{is}+\frac{p_i}{w}\gamma_{rs}\right)\gamma_{rk}\gamma_{lj}\\
&&+\frac{1}{1+w}\sum_{k,l,r}q_{kl}\left(p_r\gamma_{js}+\frac{p_j}{w}\gamma_{rs}\right)\gamma_{rk}\gamma_{li}+d(\log\mu)(e_s)\ \delta_{ij}
\end{eqnarray*}
and 
$$\frac{\partial b_{ij}}{\partial q_{kl}}(x,p,q)=\gamma_{ik}\gamma_{lj}.$$
\end{lem}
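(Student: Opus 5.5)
This is a direct computation from the definition (\ref{def bij xpq}),
$$b_{ij}(x,p,q)=\sum_{k,l}\gamma_{ik}(p)\,q_{kl}\,\gamma_{lj}(p)+d(\log\mu)_x\Big(\sum_sp_se_s\Big)\delta_{ij},$$
and I would differentiate separately with respect to each of the three groups of variables. The $q$-derivative is immediate: the second term does not involve $q$, and the first is linear in $q$. Treating $q_{kl}$ as an independent entry, this gives $\partial b_{ij}/\partial q_{kl}=\gamma_{ik}\gamma_{lj}$. If one insists on symmetric variations, the formula is understood after symmetrization, as usual.

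For the $x$-derivative, only the term $d(\log\mu)_x(\sum_s p_s e_s)$ depends on $x$. Here the frame $e_s=\frac1\lambda e_s^0$ itself depends on $x$, while $p$ is a fixed vector of components. Since $\partial_{x_t}=\lambda e_t$, I would write
$$\partial_{x_t}\big[d(\log\mu)(\textstyle\sum_sp_se_s)\big]=\lambda\, e_t\big(d(\log\mu)(\textstyle\sum_sp_se_s)\big)$$
and then expand
$$e_t\big(d(\log\mu)(Y)\big)=\nabla^Sd(\log\mu)(e_t,Y)+d(\log\mu)(\nabla^S_{e_t}Y).$$
With $Y=\sum_sp_se_s$ and $p_s$ constant, $\nabla^S_{e_t}Y=\sum_sp_s\nabla^S_{e_t}e_s$. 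Dividing by $\lambda$ gives the stated formula.

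The $p$-derivative is the only step with real content. The key sub-step is to compute $\partial\gamma_{ij}/\partial p_s$ for $\gamma_{ij}=\delta_{ij}+\frac{p_ip_j}{w(1+w)}$. Using $\partial w/\partial p_s=-p_s/w$, I get
$$\frac{\partial}{\partial p_s}\frac{1}{w(1+w)}=\frac{(1+2w)p_s}{w^3(1+w)^2},$$
so that
$$\frac{\partial\gamma_{ij}}{\partial p_s}=\frac{\delta_{is}p_j+\delta_{js}p_i}{w(1+w)}+\frac{(1+2w)p_ip_jp_s}{w^3(1+w)^2}.$$
The aim is then to rewrite this in the compact form
$$\frac{\partial\gamma_{ij}}{\partial p_s}=\frac{1}{1+w}\sum_r\Big(p_r\gamma_{is}+\frac{p_i}{w}\gamma_{rs}\Big)\gamma_{rj}$$
(or its transpose). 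To check this, I would use the identities $\sum_r p_r\gamma_{rj}=p_j/w$ (since $Pp=p/w$), $\sum_r p_r\gamma_{rs}=p_s/w$, and $\sum_r\gamma_{rs}\gamma_{rj}=(A^{-1})_{sj}=\delta_{sj}+\frac{p_sp_j}{w^2}$. Expanding the right-hand side with these identities and comparing coefficients of $\delta_{is}p_j$, $\delta_{js}p_i$ and $p_ip_jp_s$ should reproduce the expression above; I expect this bookkeeping to be the main obstacle.

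Once the compact form is established, the product rule applied to $\sum_{k,l}\gamma_{ik}q_{kl}\gamma_{lj}$ gives two terms, one from differentiating $\gamma_{ik}$ and one from differentiating $\gamma_{lj}$. Relabeling indices and using the symmetry of $\gamma$ and of $q$, these become exactly the two displayed sums, one obtained from the other by swapping $i\leftrightarrow j$. Finally, the linear term $d(\log\mu)(\sum_sp_se_s)\delta_{ij}$ contributes $d(\log\mu)(e_s)\delta_{ij}$.
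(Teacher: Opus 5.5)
Your proposal is correct and follows the paper's proof. The $x$- and $q$-derivatives are immediate, and the $p$-derivative reduces by the product rule to the compact formula for $\partial_{p_s}\gamma_{ij}$; your coefficient comparison, using $Pp=p/w$ and $P^2=A(p)^{-1}$, does confirm that formula. The only cosmetic difference is that the paper obtains it by computing $\sum_k\partial_{p_s}\gamma_{ik}\,\gamma'_{kj}$ with the inverse $\gamma'_{kj}=\delta_{kj}-\frac{p_kp_j}{1+w}$ and then multiplying back by $\gamma$, whereas you verify it directly.
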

\begin{proof}
The first and the last formulas are straightforward. Let us mention the main steps for the obtention of $\frac{\partial b_{ij}}{\partial p_s}$. By (\ref{def bij xpq}) we have $b_{ij}=\sum_{k,l}\gamma_{ik}q_{kl}\gamma_{lj}+d(\log\mu)_x(\sum_sp_se_s)\ \delta_{ij}$ and
\begin{equation}\label{partiel bij function q mu} 
\frac{\partial b_{ij}}{\partial p_s}=\sum_{k,l}\left(\partial_{p_s}\gamma_{ik}\ q_{kl}\gamma_{lj}+\gamma_{ik}q_{kl}\ \partial_{p_s}\gamma_{lj}\right)+d(\log\mu)_x(e_s)\ \delta_{ij}.
\end{equation}
Introducing the inverse of the matrix $(\gamma_{ij})_{ij}$, 
\begin{equation*}
(\gamma'_{ij})_{ij}:=\left(\delta_{ij}-\frac{1}{1+w}p_ip_j\right)_{ij},
\end{equation*}
a direct computation using $\partial_{p_s}w=-p_s/w$ and 
\begin{equation}\label{sum k gammaik pk}
\sum_k\gamma_{ik}\ p_k=\frac{p_i}{w}
\end{equation}
yields
$$\sum_k\partial_{p_s}\gamma_{ik}\ \gamma'_{kj}=-\sum_k\gamma_{ik}\ \partial_{p_s}\gamma'_{kj}=\frac{1}{1+w}\left(p_j\gamma_{is}+\frac{p_i}{w}\gamma_{js}\right),$$
which implies
\begin{equation}\label{partiel gamma ij function q mu} 
\partial_{p_s}\gamma_{ik}=\frac{1}{1+w}\sum_r\left(p_r\gamma_{is}+\frac{p_i}{w}\gamma_{rs}\right)\gamma_{rk}.
\end{equation}
The expression of $\frac{\partial b_{ij}}{\partial p_s}$ follows from (\ref{partiel bij function q mu}) and (\ref{partiel gamma ij function q mu}).
\end{proof}
By Lemma \ref{lem expr partial bij}, setting $(f)_t:=\frac{1}{\lambda}\frac{\partial}{\partial x_t}\left(f(x,\nabla^Su(x))\right),$ the derivative of the equation of prescribed curvature (\ref{estC2b eqn Gm=f}) reads
\begin{eqnarray}\label{eq presc curv derivee interm}
(f)_t&=&
\frac{1}{\lambda}\frac{\partial\mathcal{G}_m}{\partial x_t}[u]+\sum_s\frac{\partial\mathcal{G}_m}{\partial p_s}[u]\left(u_{st}+du(\nabla^S_{e_t}e_s)\right)\nonumber\\
&&+\sum_{k,l}\frac{\partial\mathcal{G}_m}{\partial q_{kl}}[u] \left(u_{kl;t}+\nabla^S du(\nabla^S_{e_t}e_k,e_l)+\nabla^S du(e_k,\nabla^S_{e_t}e_l)\right)\label{eq presc curv derivee interm}
\end{eqnarray}
with
\begin{eqnarray*}
\frac{1}{\lambda}\frac{\partial\mathcal{G}_m}{\partial x_t}[u]&=&\frac{1}{\lambda}\sum_{i,j}F^{ij}\frac{\partial b_{ij}}{\partial x_t}(x,\nabla^S u,\nabla^S du)\\
&=&\left(\nabla^S d(\log\mu)(e_t,\nabla^S u)+d(\log\mu)(\sum_su_s\nabla^S_{e_t}e_s)\right)\sum_{i}F^{ii},
\end{eqnarray*}
\begin{eqnarray}
\frac{\partial\mathcal{G}_m}{\partial p_s}[u]&=&\sum_{i,j}F^{ij}\frac{\partial b_{ij}}{\partial p_s}(x,\nabla^S u,\nabla^S du)\nonumber\\
&=&\frac{2}{1+w}\sum_{i,j,k,l,r}F^{ij}u_{kl}\left(u_r\gamma_{is}+\frac{u_i}{w}\gamma_{rs}\right)\gamma_{rk}\gamma_{lj}+d(\log \mu)(e_s)\sum_{i}F^{ii}\label{est C2b Gm ps}
\end{eqnarray}
and 
\begin{equation}\label{est C2b Gm qkl Fii}
\frac{\partial\mathcal{G}_m}{\partial q_{kl}}[u] =\sum_{i,j}F^{ij}\frac{\partial b_{ij}}{\partial q_{kl}}(x,\nabla^S u,\nabla^S du)=\sum_{i,j}F^{ij}\gamma_{ik}\gamma_{lj}.
\end{equation}

Let $\tau_{\alpha},$ $\alpha=1,\ldots,n$ be a basis of $\R^n$ which induces by $x\mapsto (x,u(x))$ a basis of principal directions of $M=\mbox{graph}(u)$ in $\widetilde{AdS}^{n,1}$ which is orthonormal with respect to the metric $g'=g_{S}-dt^2$. Let us denote $u_\alpha=du(\tau_\alpha),$ $u_{\alpha\beta}=\nabla^S d u(\tau_\alpha,\tau_\beta),$ $u_{\alpha i}=\nabla^S d u(\tau_\alpha,e_i),$ etc... and by $\gamma_{\alpha},$ $\alpha=1,\ldots, n$ the eigenvalues of the matrix $(b_{ij}(x,\nabla^Su,\nabla^Sdu))_{ij}$. All the symmetric functions below will be symmetric functions of the $\gamma_{\alpha},$ $\alpha=1,\ldots, n$. Since the basis $\tau_{\alpha}$ induces an orthonormal basis of $TM$ for the metric $g',$ we have $g'_{\alpha\beta}=\delta_{\alpha\beta}.$ Moreover, since 
$$g'^{\alpha\beta}u_{\beta\gamma}+d(\log\mu)(\nabla^S u)\delta_{\gamma}^\alpha=\gamma_{\alpha}\delta^{\alpha}_{\gamma}$$
by (\ref{intro def aij xpq})-(\ref{intro def Su}), we deduce that the matrix $(u_{\alpha\beta})_{\alpha,\beta}$ is diagonal, given by
\begin{equation}\label{uab diagonal}
(u_{\alpha\beta})_{\alpha,\beta}= \mbox{diag}\left(\gamma_{\alpha}-d(\log\mu)(\nabla^S u),\ \alpha=1,\ldots,n\right).
\end{equation}
By using the basis $\tau_{\alpha},$ $\alpha=1,\ldots,n$ and after some computations (details will be given below) we obtain from the expressions above 
\begin{equation}\label{partiel Gm ps interm}
\sum_s\frac{\partial\mathcal{G}_m}{\partial p_s}[u]u_{st}=2\sum_{\alpha}\sigma_{m-1,\alpha}u_\alpha u_{\alpha\alpha} u_{\alpha t}+(\sum_iF^{ii})\nabla^Sdu(\nabla^S(\log \mu),e_t)
\end{equation}
and
\begin{equation}
\sum_{k,l}\frac{\partial\mathcal{G}_m}{\partial q_{kl}}[u] u_{kl;t}=\sum_{\alpha}\sigma_{m-1,\alpha}u_{\alpha\alpha; t},
\end{equation}
so that (\ref{eq presc curv derivee interm}) may be written in the form
\begin{equation}\label{eq presc curv derivee C2b}
(f)_t=\sum_{\alpha=1}^n\sigma_{m-1,\alpha}(u_{\alpha\alpha;t}+2u_\alpha u_{\alpha\alpha}u_{t\alpha})+(\sum_i F^{ii})\nabla^S du(\nabla^S(\log\mu),e_t)+\mathcal{T}
\end{equation}
with 
\begin{equation}\label{formule T est C2b}
|\mathcal{T}|\leq C(\sigma_{m-1}+\sum_{\alpha}\sigma_{m-1,\alpha}|u_{\alpha\alpha}|).
\end{equation}
We have also used the estimates 
\begin{equation}\label{estim C2b terms in T}
\frac{1}{\lambda}\left|\frac{\partial\mathcal{G}_m}{\partial x_t}[u]\right|\leq \sigma_{m-1},\hspace{.5cm}\left|\frac{\partial\mathcal{G}_m}{\partial p_s}[u]\right|\leq C\left(\sigma_{m-1}+\sum_{\alpha}\sigma_{m-1,\alpha}|u_{\alpha\alpha}|\right)
\end{equation}
and, since
\begin{equation}\label{est C2b partial Gm qjk tau}
\frac{\partial\mathcal{G}_m}{\partial q_{jk}}[u]=\sum_{\alpha}\tau^j_{\alpha}\sigma_{m-1,\alpha}\tau^k_{\alpha},
\end{equation}
that
$$\sum_{j,k}\frac{\partial\mathcal{G}_m}{\partial q_{jk}}[u]\ \nabla^Sdu(\nabla^S_{e_t}e_j,e_k)=\sum_{\alpha}\sigma_{m-1,\alpha}\ \nabla^Sdu(\sum_j\tau^j_\alpha\nabla^S_{e_t}e_j,\tau_{\alpha})$$
is bounded by $C\sum_{\alpha}\sigma_{m-1,\alpha}|u_{\alpha\alpha}|$ to see that the remaining terms of (\ref{eq presc curv derivee interm}) are included in $\mathcal{T}$ satisfying (\ref{formule T est C2b}). Let us briefly explain how to obtain (\ref{partiel Gm ps interm}) ((\ref{estim C2b terms in T}) is obtained similarly). Setting
$$\mathcal{A}=\sum_s\left(\sum_{i,j,k,l,r}F^{ij}u_{kl}u_r\gamma_{is}\gamma_{rk}\gamma_{lj}\right)u_{st}\hspace{.3cm}\mbox{and}\hspace{.3cm}\mathcal{B}=\sum_s\left(\sum_{i,j,k,l,r}F^{ij}u_{kl}u_i\gamma_{rs}\gamma_{rk}\gamma_{lj}\right)u_{st}$$
(\ref{est C2b Gm ps}) implies that
$$\sum_s\frac{\partial\mathcal{G}_m}{\partial p_s}[u]u_{st}=\frac{2}{1+w}\left(\mathcal{A}+\frac{1}{w}\mathcal{B}\right)+(\sum_iF^{ii})\nabla^Sdu(\nabla^S(\log \mu),e_t).$$
$\mathcal{A}$ is computed as follows: if the coefficients $\tau^s_{\alpha}$ are such that $\tau_{\alpha}=\sum_s\tau^s_{\alpha}e_s,$ using that $\sum_{i,j}F^{ij}\gamma_{is}\gamma_{lj}=\sum_{\alpha}\sigma_{m-1,\alpha}\tau^s_{\alpha}\tau^l_{\alpha}$ (both terms are expressions of $\frac{\partial\mathcal{G}_m}{\partial q_{sl}}[u]$, using on the left hand-side $\mathcal{G}_m[u]=F((b_{ij})_{ij})$ together with the expression of $\frac{\partial b_{ij}}{\partial q_{sl}}$ given in Lemma \ref{lem expr partial bij}, and on the right-hand side (\ref{est C2b partial Gm qjk tau})) and since $\sum_ru_r\gamma_{rk}=\frac{u_k}{w}$ (by (\ref{sum k gammaik pk})) we have
$$\mathcal{A}=\frac{1}{w}\sum_{\alpha}\sigma_{m-1,\alpha}\sum_{s,k,l}u_{kl}\tau_{\alpha}^s\tau_{\alpha}^lu_ku_{st}=\frac{1}{w}\sum_{\alpha}\sigma_{m-1,\alpha}\left(\sum_ku_{k\alpha}u_k\right)u_{\alpha t}.$$
Now, writing $\nabla^Su=\sum_{\beta}u^{\beta}\tau_{\beta}$ we have $\sum_ku_{k\alpha}u_k=\nabla^Sdu(\nabla^Su,\tau_{\alpha})=u^{\alpha}u_{\alpha\alpha}.$ Since the basis $\tau_{\alpha},$ $\alpha=1,\ldots,n$ induces an orthonormal basis of $TM$ for the metric $g'=g_S-dt^2$ we have $g_S(\tau_{\alpha},\tau_{\beta})=u_{\alpha}u_{\beta}+\delta_{\alpha\beta}$ and
$$u_{\alpha}=\sum_{\beta}g_S(\tau_{\alpha},\tau_{\beta})\ u^{\beta}=\sum_{\beta}\left(u_{\alpha}u_{\beta}+\delta_{\alpha\beta}\right)u^{\beta}=u_{\alpha}|\nabla^Su|^2+u^{\alpha},$$
that is $u^{\alpha}=w^2u_{\alpha}.$ So $\sum_ku_{k\alpha}u_k=w^2u_{\alpha}u_{\alpha\alpha}$ and $\displaystyle{\mathcal{A}=w\sum_{\alpha}\sigma_{m-1,\alpha}u_{\alpha}u_{\alpha\alpha}u_{t\alpha}.}$ Let us now compute $\mathcal{B}$. Using that $u_i=w\sum_{r'}\gamma_{ir'}u_{r'}$ (by (\ref{sum k gammaik pk})) and since $\sum_{r}\gamma_{rs}\gamma_{rk}={g'}^{sk}$ we have 
$$\displaystyle{\mathcal{B}=w\sum_{\alpha}\sigma_{m-1,\alpha}u_{\alpha}\sum_{s,k}{g'}^{sk}u_{st}u_{k\alpha}}$$ 
with
$$\sum_{s,k}{g'}^{sk}u_{st}u_{k\alpha}=\sum_{\beta,\gamma}{g'}^{\beta\gamma}u_{\beta t}u_{\gamma\alpha}=\sum_{\beta}u_{\beta t}u_{\beta\alpha}=u_{\alpha t}u_{\alpha\alpha}$$
where we have used that ${g'}^{\beta\gamma}=\delta_{\beta\gamma}$ (the basis induced by $\tau_{\alpha}$ is orthonormal w.r.t. $g'$) and that $u_{\alpha\beta}$ is diagonal; we deduce that $\mathcal{B}=w\sum_{\alpha}\sigma_{m-1,\alpha}u_{\alpha}u_{\alpha\alpha}u_{t\alpha},$ and (\ref{partiel Gm ps interm}) follows.
\\

The estimates will rely on the following lemma:
\begin{lem}\label{lem g ineg FijWij}
Let us denote by $\overline{B}_r(x_0)$ the closed ball in $(B^n,g_S)$ with center $x_0$ and radius $r$, and let
\begin{eqnarray*}
g:\hspace{.5cm}\overline{\Omega}\cap \overline{B}_r(x_0)\times B(0,1)&\rightarrow&\R\\
(x,p)&\mapsto & g(x,p)
\end{eqnarray*}
be a function of class $C^2,$ concave with respect to $p,$ and
\begin{equation}\label{def W der sec bord}
W=g(.,\nabla^S u)-\frac{K}{2}\sum_{s=1}^{n-1}(u_s-u_s(x_0))^2.
\end{equation}
If $D_{r,\theta}$ stands for the compact $\overline{\Omega}\cap\overline{B}_r(x_0)\times \overline{B}(0,1-\theta),$ for $K=K(n,m,\theta,C_0,\|g\|_{1,D_{r,\theta}},\|\mu\|_{1,\overline{B}_r(x_0)})$ sufficiently large, $W$ satisfies in $\Omega\cap B_r(x_0)$ the inequality
\begin{equation}\label{ineg FijWij lem}
\sum_{i,j}\frac{\partial\mathcal{G}_m}{\partial q_{ij}}[u]\ W'_{ij}\leq C_1\left(1+|\nabla^S W|+\mathcal{G}_{m-1}[u]+\sum_{i,j}\frac{\partial\mathcal{G}_m}{\partial q_{ij}}[u] W_iW_j\right)
\end{equation}
where
\begin{equation}\label{est C2b def W'ij}
W'_{ij}:=W_{ij}+(\delta_{ij}-u_iu_j)\ dW(\nabla^S(\log\mu))
\end{equation}
and $C_1=C_1(n,m,\Omega,H,\theta,C_0,\|g\|_{2,D_{r,\theta}},\|\mu\|_{2,\overline{B}_r(x_0)}).$
\end{lem}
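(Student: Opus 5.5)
We work in the frame $\tau_\alpha$ of principal directions introduced above, where $\partial\mathcal{G}_m/\partial q_{jk}[u]=\sum_\alpha\tau^j_\alpha\sigma_{m-1,\alpha}\tau^k_\alpha$ and $(u_{\alpha\beta})$ is diagonal by (\ref{uab diagonal}). The left-hand side of (\ref{ineg FijWij lem}) is then $\sum_\alpha\sigma_{m-1,\alpha}W'_{\alpha\alpha}$. We treat the two pieces of $W$ separately.

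\textbf{The piece $g(\cdot,\nabla^Su)$.} Differentiating twice gives
$$g_{ij}=g_{x_ix_j}+2g_{x_ip_s}u_{sj}+g_{p_sp_t}u_{si}u_{tj}+g_{p_s}u_{s;ij}+\dots$$
The term $g_{p_sp_t}u_{s\alpha}u_{t\alpha}$ is $\leq 0$ by concavity and is dropped. The mixed terms, contracted with $\sigma_{m-1,\alpha}$, are bounded by $C(\sigma_{m-1}+\sum_\alpha\sigma_{m-1,\alpha}|u_{\alpha\alpha}|)$. The third-order term $\sum_\alpha\sigma_{m-1,\alpha}g_{p_s}u_{s;\alpha\alpha}$ is handled by commuting derivatives: $u_{s;\alpha\alpha}=u_{\alpha\alpha;s}+$ curvature terms of $g_S$, which are bounded by $C\sigma_{m-1}$. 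We then substitute (\ref{eq presc curv derivee C2b}) with $t=s$, weighted by $g_{p_s}$. This replaces $\sum\sigma_{m-1,\alpha}u_{\alpha\alpha;s}$ by $(f)_s-2\sum\sigma_{m-1,\alpha}u_\alpha u_{\alpha\alpha}u_{s\alpha}-(\sum F^{ii})\nabla^Sdu(\nabla^S\log\mu,e_s)-\mathcal T$.

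The leftover terms $g_{p_s}u_{s\alpha}u_\alpha u_{\alpha\alpha}\sigma_{m-1,\alpha}$ are rewritten through $W_\alpha$. Since $W_\alpha=g_{x_\alpha}+g_{p_s}u_{s\alpha}-K\sum_{s<n}(u_s-u_s(x_0))u_{s\alpha}$, they become $\sigma_{m-1,\alpha}u_\alpha u_{\alpha\alpha}W_\alpha$ plus terms of the same type coming from the $K$-part. The first is absorbed by Cauchy--Schwarz into
$$\sum\sigma_{m-1,\alpha}W_\alpha^2+\sum\sigma_{m-1,\alpha}u_{\alpha\alpha}^2.$$
The $\nabla^S\log\mu$ terms are exactly what the correction $(\delta_{ij}-u_iu_j)\,dW(\nabla^S\log\mu)$ in (\ref{est C2b def W'ij}) is designed to cancel, up to $C|\nabla^SW|$ and lower-order terms. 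The correction has this form because $(\delta_{ij}-u_iu_j)$ corresponds to the metric $g'$, so its trace against $\partial\mathcal G_m/\partial q$ is $\sum_\alpha\sigma_{m-1,\alpha}=\sum_iF^{ii}$.

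\textbf{The piece $-\frac K2\sum_{s<n}(u_s-u_s(x_0))^2$.} Its Hessian contributes
$$-K\sum_{s<n}\sigma_{m-1,\alpha}u_{s\alpha}^2-K\sum_{s<n}(u_s-u_s(x_0))\sigma_{m-1,\alpha}u_{s;\alpha\alpha}.$$
The second sum is again treated with the differentiated equation, giving terms bounded by $CK(1+\sum\sigma_{m-1,\alpha}|u_{\alpha\alpha}|)$ plus terms reabsorbed into $W_\alpha$.

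\textbf{Main obstacle.} The key point is to show that the good term $-K\sum_\alpha\sigma_{m-1,\alpha}\sum_{s<n}u_{s\alpha}^2$ dominates $C\sum\sigma_{m-1,\alpha}(u_{\alpha\alpha}^2+|u_{\alpha\alpha}|)$. This is where (\ref{est C2b ineg ILT1}) is needed. Since $\sum_{s<n}u_{s\alpha}^2$ misses only the $e_n$ component, we have $\sum_{s\leq n}u_{s\alpha}^2\geq c\,u_{\alpha\alpha}^2$ by the bounded comparison of $g_S$ and $g'$ for $|\nabla^Su|\leq1-\theta$. The problem is the single direction $\alpha_0$ whose $\tau_{\alpha_0}$ is closest to $e_n$, where the $s<n$ sum may be small.

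For the other indices we get $\sum_{s<n}u_{s\alpha}^2\geq c\,u_{\alpha\alpha}^2$. For $\alpha_0$ we use (\ref{est C2b ineg ILT1}):
$$\sigma_{m-1,\alpha_0}\gamma_{\alpha_0}^2\leq\gamma_{\alpha_0}\sigma_m+C_0\sum_{\alpha\neq\alpha_0}\sigma_{m-1,\alpha}\gamma_\alpha^2,$$
where $\gamma_{\alpha_0}\sigma_m\leq C|\gamma_{\alpha_0}|$ is controlled using $\sigma_m=f$ bounded. Linear terms are handled with $|u_{\alpha\alpha}|\leq\epsilon u_{\alpha\alpha}^2+C/\epsilon$, producing at most $C\sigma_{m-1}$. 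Choosing $K$ large then yields the inequality. The remaining constants collect into $C_1\bigl(1+|\nabla^SW|+\mathcal G_{m-1}+\sum\partial_{q_{ij}}\mathcal G_m W_iW_j\bigr)$, using $\sigma_{m-1}=\mathcal G_{m-1}[u]$.
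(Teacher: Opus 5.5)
Your architecture is the paper's. You contract in the frame $\tau_\alpha$, drop the concavity term, commute derivatives and substitute (\ref{eq presc curv derivee C2b}), and let the correction in $W'_{ij}$ absorb the $\nabla^S\log\mu$ terms. You then exploit the good term $-K\sum_\alpha\sigma_{m-1,\alpha}u_{\alpha\alpha}^2\sum_{s<n}|\eta^\alpha_s|^2$, which is weak in at most one direction $\alpha_0$; this is Lemma \ref{lem delta deltap}, which you use without saying so.

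\textbf{The gap: the bad direction $\alpha_0$.} You Cauchy--Schwarz $\sigma_{m-1,\alpha_0}u_{\alpha_0}u_{\alpha_0\alpha_0}W_{\alpha_0}$ and apply Young to $\sigma_{m-1,\alpha_0}|u_{\alpha_0\alpha_0}|$. This produces $\sigma_{m-1,\alpha_0}u_{\alpha_0\alpha_0}^2$, which you then try to dominate with (\ref{est C2b ineg ILT1}). That works when $\gamma_{\alpha_0}\le 0$, since $\gamma_{\alpha_0}\sigma_m\le 0$ can then be dropped; this is the paper's case a.

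When $\gamma_{\alpha_0}>0$ is large, $\gamma_{\alpha_0}\sigma_m$ is positive and carries no weight $\sigma_{m-1,\alpha_0}$. So your device $|u_{\alpha\alpha}|\le\epsilon u_{\alpha\alpha}^2+C/\epsilon$ does not apply: an unweighted $\epsilon\gamma_{\alpha_0}^2$ is absorbed by nothing. What you would actually need is $|\gamma_{\alpha_0}|\le C(1+\sigma_{m-1}+\sum_{\alpha\neq\alpha_0}\sigma_{m-1,\alpha}\gamma_\alpha^2)$.

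For $m=2$ this holds, because $\sigma_1^2=|\gamma|^2+2\sigma_2>|\gamma|^2$ in $\Gamma_2$. With that observation your route closes for $m=2$, and it is then shorter than the paper's case distinction.

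For $m\ge 3$ it fails. Take $n=m=3$ and $\gamma=(N,\sqrt{f/N},\sqrt{f/N})\in\Gamma_3$. Then $\sigma_{2,1}\gamma_1^2=fN$, whereas $\sigma_2+\sum_{\alpha\ge2}\sigma_{2,\alpha}\gamma_\alpha^2=O(\sqrt N)$. The terms $|\nabla^SW|$ and $\sum_\alpha\sigma_{2,\alpha}W_\alpha^2$ on the right-hand side of (\ref{ineg FijWij lem}) need not be large either, for instance if $u_{11}$ drops out of $W_1$. The lemma is stated, and used for the mixed derivatives, for every $m$, so the squared term you create is fatal there.

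\textbf{The missing idea: the paper's case b.} When $\gamma_1\ge 0$, one never squares $u_{11}$. Using (\ref{uab diagonal}) and $\sigma_{m-1,1}\gamma_1=f-\sigma_{m,1}$, write
$Q=\sigma_{m-1,1}u_1u_{11}W_1=(f-\sigma_{m,1})u_1W_1-d(\log\mu)(\nabla^Su)\,\sigma_{m-1,1}u_1W_1$.
Since $|u_1W_1|\le C(\gamma_1+1)$, the $f$-part is bounded by $C|\nabla^SW|$. The rest is controlled linearly in $\gamma_1$:
\begin{itemize}
\item $-\sigma_{m,1}\gamma_1$ is handled by the identity $-m\,\sigma_{m,1}\gamma_1=\sum_{\alpha\ge2}\sigma_{m-1,\alpha}\gamma_\alpha^2+(m+1)\sigma_{m+1,1}-\sigma_m\sigma_{1,1}$ together with (\ref{est C2b ineg ILT2});
\item $\sigma_{m-1,1}\gamma_1$ is handled by $\sigma_{m-1,1}\gamma_1=m\sigma_m-\sum_{\alpha\ge2}\sigma_{m-1,\alpha}\gamma_\alpha$, which also controls $\sigma_{m-1,1}|u_{11}|$.
\end{itemize}
In the example above this gives $|Q|\le C(1+|\nabla^SW|)$ and $\sigma_{2,1}|u_{11}|\le C$.

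\textbf{A smaller point.} The terms $\sum_tg_{p_t}(f)_t-K\sum_{s<n}(f)_s(u_s-u_s(x_0))$ contain unweighted second derivatives $f_{p_r}u_{rs}$. These are not bounded by $CK(1+\sum_\alpha\sigma_{m-1,\alpha}|u_{\alpha\alpha}|)$. They must be recombined into $\sum_rf_{p_r}W_r$, which is where the term $|\nabla^SW|$ on the right-hand side of (\ref{ineg FijWij lem}) comes from.
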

Let $\eta^{\alpha}_s,$ $\alpha,s=1,\ldots,n$ be the coefficients such that $e_s=\sum_{\alpha=1}^n\eta^{\alpha}_s\tau_\alpha,$ $s=1,\ldots,n.$ The following lemma will be crucial for the proof: 
\begin{lem}\label{lem delta deltap}
Let us fix $\varepsilon\in (0,1)$ and $\delta_{\varepsilon}=\frac{\varepsilon^2}{n-1}(1-(1-\theta)^2)$ where $\theta\in (0,1]$ is such that $\sup_{\overline{\Omega}}|\nabla^S u|\leq 1-\theta.$ If $\sum_{s=1}^{n-1}|\eta^1_s|^2<\delta_{\varepsilon}$ then, for $\alpha\geq 2,$ $\sum_{s=1}^{n-1}|\eta^\alpha_s|^2\geq\delta'_{\varepsilon}$ with
$$\delta'_{\varepsilon}=\frac{(1-\varepsilon)^2}{((n-2)!)^2(n-1)}(1-(1-\theta)^2).$$
\end{lem}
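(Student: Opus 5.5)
The plan is to turn the statement into a piece of linear algebra about the matrix $(\eta^\alpha_s)$. The only input is that the basis $(\tau_\alpha)$ is orthonormal for $g'=g_S-dt^2$ on the graph, so that $g_S(\tau_\alpha,\tau_\beta)=\delta_{\alpha\beta}+u_\alpha u_\beta$, as recalled just before (\ref{partiel Gm ps interm}).

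\textbf{Step 1: a Gram identity for the rows of $\eta$.} Let $T=(\tau^s_\alpha)$ be the matrix whose columns are the $\tau_\alpha$ in the basis $(e_s)$. Then $\eta=(\eta^\alpha_s)=T^{-1}$. Write $p=\nabla^Su$ in the basis $(e_s)$. Orthonormality for $g'$ reads
$$T^t\,(I-pp^t)\,T=I.$$
Inverting gives $I-pp^t=\eta^t\eta$, that is,
$$\sum_{\alpha=1}^n\eta^\alpha_s\eta^\alpha_t=\delta_{st}-p_sp_t\qquad\text{for all } s,t.$$
Now restrict to $s,t\le n-1$. Let $r_\alpha=(\eta^\alpha_1,\ldots,\eta^\alpha_{n-1})\in\R^{n-1}$ and $p'=(p_1,\ldots,p_{n-1})$. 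Then
$$\sum_\alpha r_\alpha r_\alpha^t=I_{n-1}-p'p'^t\geq \bigl(1-|p'|^2\bigr)I_{n-1}\geq c\,I_{n-1},$$
where $c:=1-(1-\theta)^2$, using $|p|\leq 1-\theta$.

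\textbf{Step 2: a dimension count.} Fix $\alpha\geq 2$. The $n-2$ vectors $r_\beta$ with $\beta\notin\{1,\alpha\}$ lie in $\R^{n-1}$. Hence there is a unit vector $z\in\R^{n-1}$ orthogonal to all of them. Evaluating the quadratic inequality of Step 1 on $z$, and using Cauchy--Schwarz, gives
$$c\leq\sum_\beta (r_\beta\cdot z)^2=(r_1\cdot z)^2+(r_\alpha\cdot z)^2\leq |r_1|^2+|r_\alpha|^2.$$

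\textbf{Step 3: conclusion.} By hypothesis $|r_1|^2=\sum_{s\le n-1}|\eta^1_s|^2<\delta_\varepsilon$. Step 2 then gives
$$|r_\alpha|^2> c-\delta_\varepsilon=c\Bigl(1-\frac{\varepsilon^2}{n-1}\Bigr).$$
It remains to check that this is at least $\delta'_\varepsilon$. Since $\varepsilon\in(0,1)$ and $n\geq 2$, we have
$$1-\frac{\varepsilon^2}{n-1}\geq 1-\varepsilon\geq (1-\varepsilon)^2\geq\frac{(1-\varepsilon)^2}{((n-2)!)^2(n-1)}.$$
Hence $\sum_{s\le n-1}|\eta^\alpha_s|^2\geq\delta'_\varepsilon$, which is the claim.

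This argument gives a slightly better constant than the one stated. The only point that needs care is the sign and index bookkeeping in Step 1, namely that it is $\eta^t\eta$, and not $\eta\eta^t$, that equals $I-pp^t$; the rest is elementary.
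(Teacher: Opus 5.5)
Your proof is correct, and it takes a different route from the paper. The paper gives no proof of this lemma; it only refers to Lemma 4.3 of \cite{Bay1} as ``strictly analogous''.

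The form of $\delta'_\varepsilon$, with its factor $((n-2)!)^2(n-1)$, suggests the argument there is a determinant/cofactor expansion. In such an argument one expands a minor along the row $r_\alpha$, bounds the $(n-2)\times(n-2)$ cofactors by $(n-2)!$ (all entries satisfy $|\eta^\alpha_s|\le 1$ since $\sum_\alpha(\eta^\alpha_s)^2=1-p_s^2$), and uses $\sum_s|\eta^\alpha_s|\le\sqrt{n-1}\,|r_\alpha|$. This leads to an inequality like $(1-\varepsilon)\sqrt{c}\le (n-2)!\sqrt{n-1}\,|r_\alpha|$, which reproduces $\delta'_\varepsilon$ exactly.

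You instead use the Gram identity $\sum_\alpha\eta^\alpha_s\eta^\alpha_t=\delta_{st}-p_sp_t$, which is just $g'(e_s,e_t)$ written in the $g'$-orthonormal frame. You restrict it to the tangential block to get $\sum_\beta r_\beta r_\beta^t\ge c\,I_{n-1}$. Then a dimension count, testing on a unit vector orthogonal to the other $n-2$ rows, gives $|r_1|^2+|r_\alpha|^2\ge c$ for every $\alpha\ge 2$.

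Your approach has three advantages. It avoids all combinatorics. It gives the sharper bound $c\,(1-\varepsilon^2/(n-1))$ with no factorial loss, and in fact shows that at most one of the rows $r_\beta$ can have $|r_\beta|^2<c/2$. It also shows why the Minkowski proof transfers verbatim: the only input is $g_S(\tau_\alpha,\tau_\beta)=\delta_{\alpha\beta}+u_\alpha u_\beta$, in which $\mu$ plays no role.

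The application in the proof of Lemma \ref{lem g ineg FijWij} only needs some positive lower bound depending on $n,\varepsilon,\theta$, so your constant works equally well there.
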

\noindent We refer to Lemma 4.3 in \cite{Bay1} for the proof, which is strictly analogous.
\\
\\\textit{Proof of Lemma \ref{lem g ineg FijWij}.} 
Inverting (\ref{est C2b Gm qkl Fii}) we have $F^{ij}=\sum_{k,l}\frac{\partial\mathcal{G}_m}{\partial q_{kl}}[u] {\gamma'}^{ik}{\gamma'}^{jl}$ where $({\gamma'}^{ij})_{ij}$ is the inverse of $(\gamma_{ij})_{ij},$ which implies that $\sum_{i}F^{ii}=\sum_{k,l}\frac{\partial\mathcal{G}_m}{\partial q_{kl}}[u] (\delta_{kl}-u_ku_l),$ and the definition (\ref{est C2b def W'ij}) of $W'_{ij}$ yields
\begin{eqnarray*}
\sum_{i,j=1}^n\frac{\partial \mathcal{G}_m}{\partial q_{ij}}[u]\ W'_{ij}&=&\sum_{\alpha}\sigma_{m-1,\alpha}W_{\alpha\alpha}+(\sum_{i}F^{ii})\ dW(\nabla^S(\log\mu)).
\end{eqnarray*}
We compute, for $\alpha=1,\ldots, n,$
\begin{equation}\label{expr Walpha}
W_{\alpha}=g_\alpha+\sum_{t=1}^ng_{p_t}\left(u_{t\alpha}+du(\nabla^S_{\tau_\alpha}e_t)\right)-K\sum_{s=1}^{n-1}\left(u_{s\alpha}+du(\nabla^S_{\tau_\alpha}e_s)\right)(u_s-u_s(x_0))
\end{equation}
and
\begin{eqnarray}
W_{\alpha\alpha}&=&g_{\alpha\alpha}+2\sum_{t=1}^ng_{\alpha p_t}\left(u_{t\alpha}+du(\nabla^S_{\tau_\alpha}e_t)\right)+\sum_{s,t=1}^ng_{p_t p_s}\left(u_{t\alpha}+du(\nabla^S_{\tau_\alpha}e_t)\right)\left(u_{s\alpha}+du(\nabla^S_{\tau_\alpha}e_s)\right)\nonumber\\
&&+\sum_{t=1}^ng_{p_t}\left(u_{t\alpha;\alpha}+2\nabla^S du(\nabla^S_{\tau_\alpha}e_t,\tau_{\alpha})+du({\nabla^{S}}^2_{\tau_\alpha,\tau_\alpha}e_t)\right)\nonumber\\
&&-K\sum_{s=1}^{n-1}\left(u_{s\alpha;\alpha}+2\nabla^S du(\nabla^S_{\tau_\alpha}e_s,\tau_{\alpha})+du({\nabla^S}^2_{\tau_\alpha,\tau_\alpha}e_s)\right)(u_s-u_s(x_0)))\nonumber\\
&&-K\sum_{s=1}^{n-1}\left(u_{s\alpha}+du(\nabla^S_{\tau_{\alpha}}e_s)\right)^2.\label{eqn Walphaalpha}
\end{eqnarray}
Since $g$ is concave with respect to $p,$ we have $\sum_{s,t}g_{p_tp_s}\eta^{\alpha}_s\eta^{\alpha}_t\leq 0.$ Using the Ricci formula $u_{t\alpha;\alpha}=u_{\alpha\alpha; t}+u_t{g_S}_{\alpha\alpha}-u_{\alpha}{g_S}_{\alpha t}$, a direct computation and straightforward estimates show that
\begin{eqnarray*}
\sum_{\alpha}\sigma_{m-1,\alpha}W_{\alpha\alpha}&\leq &\sum_{t=1}^n\sum_{\alpha}\sigma_{m-1,\alpha}g_{p_t}u_{\alpha\alpha;t}-K\sum_{s=1}^{n-1}\sum_{\alpha}\sigma_{m-1,\alpha}u_{\alpha\alpha;s}(u_s-u_s(x_0))\\
&&-K\sum_{\alpha}\sum_{s=1}^{n-1}\sigma_{m-1,\alpha}u_{s\alpha}^2+T_1
\end{eqnarray*}
where here and below $T_i$, for $i=1,2,3,\ldots$, denotes a sum of terms such that 
$$|T_i|\leq C_i\left(1+|\nabla^S W|+\sigma_{m-1}+\sum_{\alpha=1}^n\sigma_{m-1,\alpha}|u_{\alpha\alpha}|\right)$$
for some controlled constant $C_i$. Using the differentiated equation (\ref{eq presc curv derivee C2b}) to replace the third derivatives of $u$, and since
\begin{eqnarray*}
-2\sum_{\alpha}\sigma_{m-1,\alpha}u_{\alpha}u_{\alpha\alpha}W_{\alpha}&=&-2\sum_{\alpha}\sigma_{m-1,\alpha}u_{\alpha}u_{\alpha\alpha}\sum_{t=1}^ng_{p_t}u_{t\alpha}\\
&&+2K\sum_{\alpha}\sigma_{m-1,\alpha}u_{\alpha}u_{\alpha\alpha}\sum_{s=1}^{n-1}u_{s\alpha}(u_s-u_s(x_0))+T_2
\end{eqnarray*}
we obtain
\begin{eqnarray*}
\sum_{\alpha}\sigma_{m-1,\alpha}W_{\alpha\alpha}&\leq &\sum_{t=1}^ng_{p_t}(f)_t-K\sum_{s=1}^{n-1}(f)_s(u_s-u_s(x_0))\\
&&+\sum_{\alpha}\sigma_{m-1,\alpha}(-2u_{\alpha}u_{\alpha\alpha}W_{\alpha})-K\sum_{\alpha}\sigma_{m-1,\alpha}\sum_{s=1}^{n-1}|\eta_s^{\alpha}|^2u_{\alpha\alpha}^2\\
&&-(\sum_iF^{ii})\sum_{\alpha}(\log\mu)^{\alpha}\left(\sum_{t=1}^ng_{p_t}u_{\alpha t}-K\sum_{s=1}^{n-1}u_{s\alpha}(u_s-u_s(x_0))\right)+T_3.
\end{eqnarray*}
Here $(\log \mu)^{\alpha},$ $\alpha=1,\ldots,n,$ are the coefficients of $\nabla^S(\log\mu)$ in the basis $\tau_{\alpha}.$ We have the bound
\begin{equation}\label{pf ineq W eqn1}
\sum_{t=1}^ng_{p_t}(f)_t-K\sum_{s=1}^{n-1}(f)_s(u_s-u_s(x_0))\leq C(1+|\nabla^S W|).
\end{equation}
Indeed, since
$$(f)_t=\frac{1}{\lambda}\frac{\partial}{\partial x_t}(f(x,\nabla^S u(x)))=f_t+\sum_{r=1}^nu_{rt}f_{p_r}$$
and since there exists a controlled constant $C$ such that
$$\left|\sum_{t=1}^ng_{p_t}f_t-K\sum_{s=1}^{n-1}f_s(u_s-u_s(x_0))\right|\leq C,$$
we only have to bound the other contribution
\begin{eqnarray*}
\sum_{t=1}^ng_{p_t}\left(\sum_{r=1}^nu_{rt}f_{p_r}\right)-K\sum_{s=1}^{n-1}\left(\sum_{r=1}^nu_{rs}f_{p_r}\right)(u_s-u_s(x_0))&=&\sum_{r=1}^nf_{p_r}W_r+bounded\ terms\\
&\leq&C(1+|\nabla^S W|),
\end{eqnarray*}
so that (\ref{pf ineq W eqn1}) follows. Using finally that
$$\sum_{t=1}^ng_{p_t}u_{\alpha t}-K\sum_{s=1}^{n-1}u_{s\alpha}(u_s-u_s(x_0))=W_{\alpha}+bounded\ terms$$
we deduce that
\begin{eqnarray*}
\sum_{\alpha}\sigma_{m-1,\alpha}W_{\alpha\alpha}&\leq& \sum_{\alpha}\sigma_{m-1,\alpha}(-2u_{\alpha}u_{\alpha\alpha}W_{\alpha})-K\sum_{\alpha}\sigma_{m-1,\alpha}\sum_{s=1}^{n-1}|\eta_s^{\alpha}|^2u_{\alpha\alpha}^2\\
&&-(\sum_iF^{ii})\sum_{\alpha}(\log\mu)^{\alpha}W_{\alpha}+T_4
\end{eqnarray*}
which implies that
\begin{eqnarray}
\sum_{i,j=1}^n\frac{\partial \mathcal{G}_m}{\partial q_{ij}}[u]\ W'_{ij}&\leq&  C\left(1+|\nabla^S W|+\sigma_{m-1}+\sum_{\alpha=1}^n\sigma_{m-1,\alpha}|u_{\alpha\alpha}|\right)\nonumber\\
&&+\sum_{\alpha=1}^n\sigma_{m-1,\alpha}\left(-Ku_{\alpha \alpha}^2\sum_{s=1}^{n-1}|\eta^{\alpha}_s|^2-2u_{\alpha}u_{\alpha\alpha}W_{\alpha}\right).\label{sum Waa lem interm}
\end{eqnarray}
As in \cite{Bay1}, we now use the term $-K\sum_{\alpha=1}^n\sigma_{m-1,\alpha}u_{\alpha \alpha}^2\sum_{s=1}^{n-1}|\eta^{\alpha}_s|^2$ to balance the bad term $\sum_{\alpha=1}^n\sigma_{m-1,\alpha}u_{\alpha\alpha}^2$ which appears when bounding in (\ref{sum Waa lem interm}) the terms $\sum_{\alpha=1}^n\sigma_{m-1,\alpha}|u_{\alpha\alpha}|$ and $\sum_{\alpha=1}^n\sigma_{m-1,\alpha}|u_{\alpha}u_{\alpha\alpha}W_\alpha|.$ We distinguish two kinds of points: either $\sum_{s=1}^{n-1}|\eta^{\alpha}_s|^2\geq\delta_{\varepsilon}$ for all $\alpha\in\{1,2,\ldots,n\}$, or $\sum_{s=1}^{n-1}|\eta^{\alpha}_s|^2<\delta_{\varepsilon}$ for some $\alpha\in\{1,2,\ldots,n\}$, where $\delta_{\varepsilon}$ is introduced in Lemma \ref{lem delta deltap}. For the points of the first type, we bound 
$$C\sum_{\alpha=1}^n\sigma_{m-1,\alpha}|u_{\alpha\alpha}|\leq \frac{C'}{\delta_{\varepsilon}}\sigma_{m-1}+\frac{\delta_{\varepsilon}}{2}\sum_{\alpha=1}^n\sigma_{m-1,\alpha}u_{\alpha\alpha}^2$$
and 
$$2\sum_{\alpha=1}^n\sigma_{m-1,\alpha}|u_{\alpha}u_{\alpha\alpha}W_\alpha|\leq \frac{C''}{\delta_{\varepsilon}}\sum_{\alpha=1}^n\sigma_{m-1,\alpha}W_\alpha^2+\frac{\delta_{\varepsilon}}{2}\sum_{\alpha=1}^n\sigma_{m-1,\alpha}u_{\alpha\alpha}^2$$
and (\ref{ineg FijWij lem}) holds for $K\geq 1.$ For the points of the second type, assuming that $\alpha=1,$ Lemma \ref{lem delta deltap} implies that  $\sum_{s=1}^{n-1}|\eta^{\alpha}_s|^2\geq \delta'_{\varepsilon}$ for $\alpha\geq 2;$ we then consider two cases: 
\\
\\a. If $\gamma_1< 0$, the inequality (\ref{est C2b ineg ILT1}) of Ivochkina, Lin and Trudinger yields
\begin{equation}\label{ILT simple}
\sigma_{m-1,1}\gamma_1^2\leq C_0\sum_{\alpha=2}^n\sigma_{m-1,\alpha}\gamma_{\alpha}^2
\end{equation}
for some constant $C_0=C_0(n,m).$ Since $u_{\alpha\alpha}=\gamma_\alpha-d(\log\mu)(\nabla^Su)$ by (\ref{uab diagonal}), there exist controlled constants $c_1,c_2,c'_1$ and $c'_2$ such that $u_{11}^2\leq c_1\gamma_1^2+c_2$ and $\gamma_\alpha^2\leq c_1'u_{\alpha\alpha}^2+c'_2$ for all $\alpha\geq 2$,  and (\ref{ILT simple}) implies that
$$\sum_{\alpha=1}^n\sigma_{m-1,\alpha}u_{\alpha\alpha}^2\leq C'\sum_{\alpha=2}^n\sigma_{m-1,\alpha}u_{\alpha\alpha}^2+C''\sigma_{m-1}.$$
We deduce the estimates
$$\sum_{\alpha=1}^n\sigma_{m-1,\alpha}|u_{\alpha\alpha}|\leq C_1\sum_{\alpha=2}^n\sigma_{m-1,\alpha}u_{\alpha\alpha}^2+C_2\sigma_{m-1}$$
and 
$$\sum_{\alpha=1}^n\sigma_{m-1,\alpha}|u_{\alpha}u_{\alpha\alpha}W_\alpha|\leq C_3 \sum_{\alpha=1}^n\sigma_{m-1,\alpha}W_\alpha^2+C_4\sum_{\alpha=2}^n\sigma_{m-1,\alpha}u_{\alpha\alpha}^2+C_5\sigma_{m-1}$$
for controlled constants $C_i,$ $i=1,\ldots,5,$ so that (\ref{sum Waa lem interm}) implies (\ref{ineg FijWij lem}) if $K$ is sufficiently large.
\\
\\b. If $\gamma_1\geq 0,$ setting $Q=\sigma_{m-1,1}u_1u_{11}W_1,$ we write
\begin{equation}\label{est C2b eqn est Q 1}
\sum_{\alpha=1}^n\sigma_{m-1,\alpha}u_\alpha u_{\alpha\alpha}W_{\alpha}=Q+\sum_{\alpha=2}^n\sigma_{m-1,\alpha}u_\alpha u_{\alpha\alpha}W_{\alpha}.
\end{equation}
The last term is bounded by
\begin{equation}\label{est C2b eqn est Q 2}
\left|\sum_{\alpha=2}^n\sigma_{m-1,\alpha}u_\alpha u_{\alpha\alpha}W_{\alpha}\right|\leq C\sum_{\alpha=1}^n\sigma_{m-1,\alpha}W_{\alpha}^2+\sum_{\alpha=2}^n\sigma_{m-1,\alpha}u_{\alpha\alpha}^2.
\end{equation}
We finally bound $Q:$ we have $f-\sigma_{m,1}=\sigma_{m-1,1}\gamma_1\geq 0$ and using (\ref{uab diagonal}) we write
\begin{equation}\label{eqn Q 1}
Q=(f-\sigma_{m,1})u_1W_1-d(\log\mu)(\nabla^Su)\sigma_{m-1,1}u_1W_1.
\end{equation}
We first note that
\begin{equation}\label{eqn Q 2}
|u_1W_1|\leq C(\gamma_1+1).
\end{equation}
This is a consequence of (\ref{uab diagonal}) and (\ref{expr Walpha}) for $\alpha=1$. We deduce the estimate
\begin{equation}\label{eqn Q 3}
|d(\log\mu)(\nabla^Su)|\ \sigma_{m-1,1}|u_1W_1|\leq C'\sigma_{m-1,1}(\gamma_1+1).
\end{equation}
Assuming first that $\sigma_{m,1}\geq 0$ we have $\sigma_{m-1,1}\gamma_1=\sigma_m-\sigma_{m,1}\leq\sigma_m$ and we readily obtain from (\ref{eqn Q 1}), (\ref{eqn Q 2}) and (\ref{eqn Q 3}) that
$$|Q|\leq C(1+|\nabla^S W|+\sigma_{m-1}).$$
If now $\sigma_{m,1}<0,$ we have
$$|Q|\leq |u_1||W_1|f-|u_1||W_1|\sigma_{m,1}+|u_1||W_1||d(\log\mu)(\nabla^Su)|\sigma_{m-1,1}.$$
The first term on the right-hand side is bounded by $C_1|\nabla^S W|$ and using (\ref{eqn Q 2}) we have 
\begin{equation}\label{est C2b est Q interm}
|Q|\leq C_1|\nabla^S W|-C_2\sigma_{m,1}(\gamma_1+1)+C_3\sigma_{m-1,1}(\gamma_1+1).
\end{equation}
We will obtain the estimate of $Q$ thanks to the following inequalities:
\begin{enumerate}
\item $-\sigma_{m,1}\gamma_1\leq C'\sum_{\alpha=2}^n\sigma_{m-1,\alpha}u_{\alpha\alpha}^2+C''\sigma_{m-1};$ indeed, writing $\sigma_{m-1,\alpha}\gamma_{\alpha}=\sigma_m-\sigma_{m,\alpha},$ we have
\begin{eqnarray*}
\sum_{\alpha=2}^n\sigma_{m-1,\alpha}\gamma_\alpha^2=\sum_{\alpha=2}^n(\sigma_m-\sigma_{m,\alpha})\gamma_\alpha&=&\sigma_m\sigma_{1,1}-\sum_{\alpha=2}^n\sigma_{m,\alpha}\gamma_\alpha\\
&=&\sigma_m\sigma_{1,1}-(m+1)\sigma_{m+1}+\sigma_{m,1}\gamma_1,
\end{eqnarray*}
and since $\sigma_{m+1}=\sigma_{m,1}\gamma_1+\sigma_{m+1,1}$ we deduce that
$$-\sigma_{m,1}\gamma_1=\frac{1}{m}\left(\sum_{\alpha=2}^n\sigma_{m-1,\alpha}\gamma_\alpha^2+(m+1)\sigma_{m+1,1}-\sigma_m\sigma_{1,1}\right);$$
we then conclude using $\sigma_{m+1,1}\leq C_0\sum_{\alpha=2}^n\sigma_{m-1,\alpha}\gamma_\alpha^2$ (Eq. (\ref{est C2b ineg ILT2})), $\sigma_m\sigma_{m-1,1}\geq 0$ and
\begin{equation}\label{ineq gammaalpha ualpha}
\sum_{\alpha=2}^n\sigma_{m-1,\alpha}\gamma_\alpha^2\leq C_1\sum_{\alpha=2}^n\sigma_{m-1,\alpha}u_{\alpha\alpha}^2+C_2\sigma_{m-1};
\end{equation}
\item $-\sigma_{m,1}=\sigma_{m-1,1}\gamma_1-f\leq \sigma_{m-1,1}\gamma_1;$
\item $\sigma_{m-1,1}\gamma_1\leq C(1+\sigma_{m-1}+\sum_{\alpha=2}^n\sigma_{m-1,\alpha}u_{\alpha\alpha}^2);$ it is a consequence of
\begin{eqnarray}
\sigma_{m-1,1}\gamma_1&=&m\sigma_m-\sum_{\alpha=2}^n\sigma_{m-1,\alpha}\gamma_{\alpha}\nonumber\\
&\leq&m\sigma_m+\frac{1}{4\gamma_0}\sum_{\alpha=2}^n\sigma_{m-1,\alpha}+\gamma_0\sum_{\alpha=2}^n\sigma_{m-1,\alpha}\gamma_\alpha^2\nonumber\\
&\leq&C_{\gamma_0}(1+\sigma_{m-1})+\gamma_0\sum_{\alpha=2}^n\sigma_{m-1,\alpha}\gamma_\alpha^2,\label{est sigma m-11gamma1}
\end{eqnarray}
where $\gamma_0>0$ is arbitrary, together with (\ref{ineq gammaalpha ualpha});
\item $\sigma_{m-1,1}\leq \sigma_{m-1}$ (since $\sigma_{m-1}=\sigma_{m-1,1}+\sigma_{m-2,1}\gamma_1\geq \sigma_{m-1,1}$).
\end{enumerate}
We deduce from (\ref{est C2b est Q interm}) and the four inequalities above that
\begin{equation}\label{est C2b eqn est Q 3}
|Q|\leq C\left(1+|\nabla^S W|+\sigma_{m-1}+\sum_{\alpha=2}^n\sigma_{m-1,\alpha}u_{\alpha\alpha}^2\right).
\end{equation}
(\ref{est C2b eqn est Q 1}), (\ref{est C2b eqn est Q 2}) and (\ref{est C2b eqn est Q 3}) imply that
$$\sum_{\alpha=1}^n\sigma_{m-1,\alpha}|u_{\alpha}u_{\alpha\alpha}W_{\alpha}|\leq C\left(1+|\nabla^SW|+\sigma_{m-1}+\sum_{\alpha=1}^n\sigma_{m-1,\alpha}W_{\alpha}^2+\sum_{\alpha=2}^n\sigma_{m-1,\alpha}u_{\alpha\alpha}^2\right).$$
By the same arguments (especially (\ref{est sigma m-11gamma1})), we also have
$$\sum_{\alpha=1}^n\sigma_{m-1,\alpha}|u_{\alpha\alpha}|\leq C\left(1+\sigma_{m-1}+\sum_{\alpha=2}^n\sigma_{m-1,\alpha}u_{\alpha\alpha}^2\right)$$
and (\ref{sum Waa lem interm}) implies (\ref{ineg FijWij lem}) for $K$ sufficiently large. This finishes the proof of Lemma \ref{lem g ineg FijWij}.
\begin{lem}\label{lem ineq Wtilde}
For a constant $b$ sufficiently large, the function
\begin{equation}\label{def Wtilde estC2b}
\widetilde{W}=\exp\left(-C_1g(x_0,\nabla^S u(x_0))\right)-\exp(-C_1W)-b\ d_S(x_0,.)^2
\end{equation}
satisfies
\begin{equation}\label{ineq Wtildeij estC2b}
\sum_{i,j}\frac{\partial\mathcal{G}_m}{\partial q_{ij}}[u]\ \widetilde{W}'_{ij}\leq C_2(1+|\nabla^S\widetilde{W}|)
\end{equation}
where
$$\widetilde{W}'_{ij}:=\widetilde{W}_{ij}+(\delta_{ij}-u_iu_j)\ d\widetilde{W}(\nabla^S(\log\mu))$$
and $d_S(x_0,.)$ stands for the function distance to $x_0$ with respect to the metric $g_S.$
\end{lem}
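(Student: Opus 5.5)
We write $\mathcal{L}v:=\sum_{i,j}\frac{\partial\mathcal{G}_m}{\partial q_{ij}}[u]\,v'_{ij}$ for the operator on the left of (\ref{ineq Wtildeij estC2b}), with $v'_{ij}=v_{ij}+(\delta_{ij}-u_iu_j)\,dv(\nabla^S(\log\mu))$; it is linear in $v$ and its first-order part is bounded by $C\,\mathcal{G}_{m-1}[u]\,|\nabla^S v|$. The idea is to exploit the exponential change of variables to absorb the quadratic gradient term of Lemma \ref{lem g ineg FijWij}. Then we use the concavity-type lower bound on the trace $\sum_i\frac{\partial\mathcal{G}_m}{\partial q_{ii}}[u]$ to absorb $\mathcal{G}_{m-1}[u]$ with the term $-b\,d_S(x_0,.)^2$.

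\textbf{Step 1: the exponential term.} Set $\Phi=-\exp(-C_1W)$. Then $\Phi_i=C_1e^{-C_1W}W_i$ and $\Phi_{ij}=C_1e^{-C_1W}(W_{ij}-C_1W_iW_j)$, so
$$\mathcal{L}\Phi=C_1e^{-C_1W}\Big(\mathcal{L}W-C_1\sum_{i,j}\frac{\partial\mathcal{G}_m}{\partial q_{ij}}[u]W_iW_j\Big).$$
By (\ref{ineg FijWij lem}), the quadratic terms cancel exactly, and we are left with
$$\mathcal{L}\Phi\leq C_1^2e^{-C_1W}\big(1+|\nabla^SW|+\mathcal{G}_{m-1}[u]\big).$$
Since $|W|$ is controlled on $\overline{\Omega}\cap\overline{B}_r(x_0)$ (the gradient is bounded), $e^{-C_1W}$ lies between two controlled positive constants. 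Moreover $|\nabla^SW|=C_1^{-1}e^{C_1W}|\nabla^S\Phi|$. Hence $\mathcal{L}\Phi\leq C(1+|\nabla^S\Phi|+\mathcal{G}_{m-1}[u])$. The constant term $\exp(-C_1g(x_0,\nabla^Su(x_0)))$ contributes nothing.

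\textbf{Step 2: the distance term.} For $r$ small, $d_S(x_0,.)^2$ is smooth on $\overline{B}_r(x_0)$ with $\nabla^Sd(d_S^2)\geq g_S$ and bounded gradient. Hence
$$\mathcal{L}(d_S^2)\geq \sum_{i}\frac{\partial\mathcal{G}_m}{\partial q_{ii}}[u]-C\sum_i F^{ii}.$$
Using (\ref{est C2b Gm qkl Fii}), $\sum_i\frac{\partial\mathcal{G}_m}{\partial q_{ii}}=\sum_{i,j}F^{ij}(\gamma^2)_{ij}\geq\sum_iF^{ii}$, since $P(p)^2=A(p)^{-1}\geq I$. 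Also $\sum_iF^{ii}=(n-m+1)\mathcal{G}_{m-1}[u]$.

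The difficulty here is the sign of the error term: the first-order part $\sum_i F^{ii}\,d(d_S^2)(\nabla^S\log\mu)$ is only $O(r)\sum F^{ii}$, because $|\nabla^S d_S^2|\leq 2r$. So we get $\mathcal{L}(d_S^2)\geq c_0\,\mathcal{G}_{m-1}[u]$ with $c_0>0$, after shrinking $r$ if necessary, or simply because the gradient of $d_S^2$ vanishes to first order at $x_0$. Therefore
$$-b\,\mathcal{L}(d_S^2)\leq -bc_0\,\mathcal{G}_{m-1}[u].$$

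\textbf{Step 3: conclusion.} Combining the two steps,
$$\mathcal{L}\widetilde{W}\leq C(1+|\nabla^S\Phi|)+(C-bc_0)\,\mathcal{G}_{m-1}[u].$$
We then use $|\nabla^S\Phi|\leq|\nabla^S\widetilde{W}|+2br$. Choosing $b\geq C/c_0$ kills the $\mathcal{G}_{m-1}$ term, and the remaining constants give (\ref{ineq Wtildeij estC2b}) with $C_2$ depending on $b$.

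The main obstacle is ensuring that the positive contribution of the distance function genuinely dominates a multiple of $\mathcal{G}_{m-1}[u]$ uniformly. This uses $\mathcal{G}_m[u]=f>0$ and admissibility to guarantee $F^{ij}>0$, together with the trace identity above. It also requires handling the first-order $\mu$-terms by taking $r$ small.
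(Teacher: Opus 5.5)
Your proof is correct and is essentially the paper's argument: the exponential change of variables cancels the quadratic gradient term from Lemma \ref{lem g ineg FijWij}, the term $-b\,d_S(x_0,.)^2$ dominates $\mathcal{G}_{m-1}[u]$ once $r$ is shrunk so that the first-order $\mu$-term is $O(r)$, and $|\nabla^S W|$ is converted into $|\nabla^S\widetilde{W}|$ at the end. You also make explicit the trace comparison $\sum_i\partial\mathcal{G}_m/\partial q_{ii}\geq\sum_iF^{ii}=(n-m+1)\mathcal{G}_{m-1}[u]$, which the paper uses implicitly; note that only the shrinking of $r$, not the vanishing of the gradient at $x_0$ by itself, gives the needed uniform bound on the ball.
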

\begin{proof}
Let us consider the function $\phi=r^2$ where $r=d_S(x_0,.)$, and fix $\delta_0,r_0>0$ small such that $\nabla^Sd\phi\geq \delta_0 g_S$ on $\overline{B}_{r_0}(x_0).$ We have
$$(\phi'_{ij})_{ij}:=(\phi_{ij})_{ij}+d\phi(\nabla^S(\log\mu))(\delta_{ij}-u_iu_j)_{ij}\geq(\delta_0-cr)g_S$$
where $c$ is a constant such that $|d\phi(\nabla^S(\log\mu))|=2r|dr(\nabla^S(\log\mu))|\leq cr,$ and choosing $r_0$ smaller we may suppose that
$$(\phi'_{ij})_{ij}\geq\frac{\delta_0}{2} g_S.$$
From that inequality and (\ref{ineg FijWij lem}) we deduce that
\begin{eqnarray*}
\sum_{i,j}\frac{\partial\mathcal{G}_m}{\partial q_{ij}}[u]\ \widetilde{W}'_{ij}&\leq& C_1^2\exp(-C_1W)\left(1+|\nabla^SW|+\mathcal{G}_{m-1}[u]\right)-b\sum_{i,j}\frac{\partial\mathcal{G}_m}{\partial q_{ij}}[u]\phi'_{ij}\\
&\leq& C_1^2\exp(-C_1W)\left(1+|\nabla^SW|+\mathcal{G}_{m-1}[u]\right)-\frac{b\delta_0}{2}\sum_{i}\frac{\partial\mathcal{G}_m}{\partial q_{ii}}[u]\\
&\leq&C(1+|\nabla^SW|)
\end{eqnarray*}
if $b$ is sufficiently large. Finally, since $dW=C_1^{-1}\exp(C_1W)(d\widetilde{W}+2brdr)$ we have $|\nabla^SW|\leq  C'(1+|\nabla^S\widetilde{W}|)$ and (\ref{ineq Wtildeij estC2b}) holds for some controlled constant $C_2.$
\end{proof}
We will use the following elementary comparison principle:
\begin{lem}\label{est C2b ppe comp w'}
Let $u,v,w$ be $C^2$ on $\overline{\Omega},$ with $u$ admissible, such that
\begin{equation}\label{est C2b ppe max ineq w}
\sum_{i,j}\frac{\partial\mathcal{G}_m}{\partial q_{ij}}[u]\ w'_{ij}\leq h(.,\nabla^S w)
\end{equation}
and 
\begin{equation}\label{est C2b ppe max ineq v}
\sum_{i,j}\frac{\partial\mathcal{G}_m}{\partial q_{ij}}[u]\ v'_{ij}> h(.,\nabla^S v)
\end{equation}
in $\Omega,$ where
$$w'_{ij}:=w_{ij}+(\delta_{ij}-u_iu_j)dw(\nabla^S(\log\mu))\hspace{.3cm}\mbox{and}\hspace{.3cm}v'_{ij}:=v_{ij}+(\delta_{ij}-u_iu_j)dv(\nabla^S(\log\mu))$$
and $h:\Omega\times B(0,1)\rightarrow\R$ is a real function. 
If $v\leq w$ on $\partial\Omega$ then $v\leq w$ in $\overline{\Omega}.$
\end{lem}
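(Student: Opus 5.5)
We argue by contradiction, using the classical interior-maximum argument for linear elliptic operators.

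Set $z=v-w$ and suppose $\max_{\overline{\Omega}}z>0$. Since $z\leq 0$ on $\partial\Omega$, this maximum is attained at an interior point $x_1\in\Omega$. At $x_1$ we have $dz=0$, that is $dv=dw$, and hence $\nabla^S v(x_1)=\nabla^S w(x_1)$. We also have $\nabla^S dz(x_1)\leq 0$ as a quadratic form.

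The zero-order correction term in $v'_{ij}$ and $w'_{ij}$ depends only on the first derivatives. Therefore, at $x_1$,
$$v'_{ij}-w'_{ij}=z_{ij}+(\delta_{ij}-u_iu_j)\,dz(\nabla^S(\log\mu))=z_{ij}.$$
By ellipticity (\ref{intro Hm elliptic Km}) on admissible functions, the matrix $\left(\frac{\partial\mathcal{G}_m}{\partial q_{ij}}[u]\right)_{ij}=\left(\sum_{k,l}F^{kl}\gamma_{ki}\gamma_{jl}\right)_{ij}$ is positive definite at $x_1$. It is obtained from $(F^{kl})$ by congruence with the positive definite matrix $P(\nabla^Su)$. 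Pairing this positive definite matrix with the nonpositive matrix $(z_{ij}(x_1))$ gives
$$\sum_{i,j}\frac{\partial\mathcal{G}_m}{\partial q_{ij}}[u]\,(v'_{ij}-w'_{ij})(x_1)\leq 0.$$

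Combining this with (\ref{est C2b ppe max ineq w}) and (\ref{est C2b ppe max ineq v}), and using $\nabla^Sv(x_1)=\nabla^Sw(x_1)$, we get
$$h(x_1,\nabla^Sv(x_1))<\sum_{i,j}\frac{\partial\mathcal{G}_m}{\partial q_{ij}}[u]\,v'_{ij}(x_1)\leq\sum_{i,j}\frac{\partial\mathcal{G}_m}{\partial q_{ij}}[u]\,w'_{ij}(x_1)\leq h(x_1,\nabla^Sw(x_1))=h(x_1,\nabla^Sv(x_1)).$$
This is a contradiction. Hence $z\leq 0$, so $v\leq w$ in $\overline{\Omega}$.

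No regularity of $h$ is needed, because the strict inequality in (\ref{est C2b ppe max ineq v}) absorbs everything. The only point requiring care is that the trace of a positive semidefinite matrix against a negative semidefinite one is $\leq 0$. This is standard: diagonalize one of the two matrices.
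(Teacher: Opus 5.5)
Your proof is correct and follows the same route as the paper: an interior positive maximum of $v-w$ forces $dv=dw$, so the first-order correction terms cancel and $(v'_{ij})\leq(w'_{ij})$ there. Ellipticity of $\partial\mathcal{G}_m/\partial q_{ij}[u]$ on admissible $u$ then contradicts the two inequalities, since $h$ is evaluated at the same gradient.
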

\begin{proof}
Assuming by contradiction that $v>w$ somewhere in $\Omega,$ $v-w$ would reach its maximum at some point $x_0\in\Omega,$ and we would have $dv_{x_0}=dw_{x_0}$ and $(v_{ij}(x_0))_{ij}\leq (w_{ij}(x_0))_{ij};$ that would imply $(v'_{ij}(x_0))_{ij}\leq (w'_{ij}(x_0))_{ij},$ and yield a contradiction with (\ref{est C2b ppe max ineq w}) and (\ref{est C2b ppe max ineq v}).
\end{proof}
\begin{lem}
Let us suppose that $\psi\in C^2(\partial\Omega\cap\overline{B}_{r}(x_0))$ and $a_0\in\R$ are given, and let us denote by $d$ the distance to $\partial\Omega$ with respect to the metric $g_S.$ Then, for large parameters $b_0$ and $c_0,$ the function
\begin{equation}\label{def v estC2b}
v=-a_0d_S(x_0,.)^2-h(d)+\psi(x')
\end{equation}
with $h(d)=c_0(1-\exp(-b_0d))$ is such that $(v'_{ij})_{ij}$ is positive and satisfies
\begin{equation}\label{est C2b ineq vpij}
\sum_{i,j}\frac{\partial\mathcal{G}_m}{\partial q_{ij}}[u]\ v'_{ij}> C_2(1+|\nabla^S v|)
\end{equation}
on $\Omega\cap B_r(x_0).$
\end{lem}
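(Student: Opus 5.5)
We compute the matrix $(v'_{ij})$ term by term. The key fact is that $-h(d)$ is strongly concave in the normal direction with huge curvature, which is what we exploit.

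\textbf{Hessian of $v$.} In $\overline{B}_r(x_0)$ (with $r$ small, so that $d$ is $C^2$ near $\partial\Omega$) we have
$$-h(d)_{ij}=-h'(d)d_{ij}-h''(d)d_id_j,\qquad h'=c_0b_0e^{-b_0d}>0,\qquad -h''=c_0b_0^2e^{-b_0d}>0.$$
Since $\Omega$ is strictly convex with respect to $g_S$, $-\nabla^Sdd\geq \kappa_0\,g_S$ on the tangential directions near $\partial\Omega$, for some $\kappa_0>0$. Hence
$$(-h(d))_{ij}\geq h'(d)\left(\kappa_0 g_S-C\,d_i d_j\right)+c_0b_0^2e^{-b_0d}d_id_j .$$
For $b_0$ large compared with $C$, this gives $(-h(d))_{ij}\geq \tfrac{\kappa_0}{2}h'(d)\,g_S+\tfrac{1}{2}c_0b_0^2e^{-b_0d}\nabla d\otimes\nabla d$.

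The other terms, $-a_0\phi$ (with $\phi=d_S(x_0,\cdot)^2$) and $\psi(x')$ (extended constantly along normals), have Hessians bounded by $C(|a_0|+\|\psi\|_2)$. On $B_r(x_0)$ we have $d\leq r$, so $h'(d)\geq c_0b_0e^{-b_0r}$; with $r\leq 1/b_0$ this is at least $c_0b_0/e$. Choosing $c_0$ large then makes $h'$ dominate these constants. For the correction term $(\delta_{ij}-u_iu_j)\,dv(\nabla^S\log\mu)$ we note that $(\delta_{ij}-u_iu_j)\geq(1-(1-\theta)^2)I>0$ and $|dv(\nabla^S\log\mu)|\leq C(1+h'(d))$. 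The coefficient $h'$ enters with a constant independent of $b_0$, so it is absorbed by $\tfrac{\kappa_0}{2}h'g_S$ once $b_0$ is large. Altogether $(v'_{ij})\geq c\,h'(d)\,I$ with $c>0$ controlled, hence $(v'_{ij})$ is positive.

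\textbf{The inequality (\ref{est C2b ineq vpij}).} Using (\ref{est C2b Gm qkl Fii}) and the ellipticity of $\mathcal{G}_m$ on admissible $u$,
$$\sum_{i,j}\frac{\partial\mathcal{G}_m}{\partial q_{ij}}[u]\,v'_{ij}\geq c\,h'(d)\sum_i\frac{\partial\mathcal{G}_m}{\partial q_{ii}}[u]\geq c' h'(d)\sum_\alpha\sigma_{m-1,\alpha}\geq c''h'(d)\,\sigma_{m-1},$$
since $(\gamma_{ij})$ is uniformly comparable to the identity when $|\nabla^Su|\leq 1-\theta$. By the Maclaurin (or Newton) inequality on $\Gamma_m$,
$$\sigma_{m-1}\geq c\,\sigma_m^{(m-1)/m}=c\,f^{(m-1)/m}\geq c_H>0,$$
because $f$ is bounded below: $H>0$, $\mu\geq1$ and $|\nabla^Su|\leq1-\theta$. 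The right-hand side satisfies
$$C_2(1+|\nabla^Sv|)\leq C_2\left(1+C(|a_0|+\|\psi\|_1)+h'(d)\right).$$
Thus it suffices that $c''c_H\,h'(d)$ exceeds $C_2h'(d)+C$. The first part needs a better use of the structure: we use instead $\sum_\alpha\sigma_{m-1,\alpha}\geq(n-m+1)\sigma_{m-1}$ together with the $b_0^2$ term. The normal direction contributes
$$\tfrac12 c_0b_0^2e^{-b_0d}\sum_{i,j}\frac{\partial\mathcal{G}_m}{\partial q_{ij}}d_id_j,$$
which is not quite controlled, since the normal eigen-direction may carry a small $\sigma_{m-1,\alpha}$.

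\textbf{Main obstacle and fix.} The hard step is beating the term $C_2h'(d)$ coming from the gradient. I would handle it by noting $\sum_i\frac{\partial\mathcal{G}_m}{\partial q_{ii}}\geq c\,\sigma_{m-1}\geq c\,\sigma_m^{(m-1)/m}\binom{n}{m}^{\ldots}$, which is only a bounded-below constant, so first normalize. The coefficient $\kappa_0/2$ is fixed, but we may replace $h$ by $h=c_0(1-e^{-b_0d})$ and exploit that the tangential lower bound of $v'$ grows like $h'\cdot b_0^{0}$ while the correction is also $O(h')$. Hence I would modify the choice: pick $r$ and the constant in front so that $c''c_H\kappa_0/2\geq 2C_2$, by rescaling $d$. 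Concretely, replace $d$ by $Ad-\tfrac{A^2}{2}\cdot$(nothing) and use that the $\kappa_0$ term may be amplified via adding $-a_1 d$ beforehand. If that fails, I would use $\sigma_{m-1}\geq c\,\sigma_1^{1/(m-1)}$-type growth: either the trace is large (so it wins), or all $\gamma_\alpha$ are bounded and then every $\sigma_{m-1,\alpha}\geq c>0$, so the $b_0^2$ normal term applies and dominates $C_2h'$ for $b_0$ large. This dichotomy, with $c_0$ then chosen large to dominate the constant terms $C(1+|a_0|+\|\psi\|)$, gives (\ref{est C2b ineq vpij}).
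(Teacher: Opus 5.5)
\textbf{Gap: positivity of $(v'_{ij})$ in the tangential directions.} The correction term $d(\log\mu)(\nabla^S v)\,(\delta_{ij}-u_iu_j)$ contains $-h'(d)\,d(\log\mu)(\nabla^S d)\,(\delta_{ij}-u_iu_j)$. This term has size $h'$ in \emph{every} direction, with constant $\sup|\nabla^S\log\mu|$, which is unrelated to the convexity constant $\kappa_0$. In the normal direction it is absorbed by the $b_0h'$ term. In the tangential directions, however, the only positive contribution is $\frac{\kappa_0}{2}h'$, which carries no power of $b_0$. Enlarging $b_0$ or $c_0$ does not change the ratio of these two $O(h')$ quantities. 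Where $d\mu(\nabla^S d)>0$, tangential positivity would need a condition like $\kappa'_i>(\log\mu)_n$, and $g_S$-convexity alone does not give that. So your claim that the term is ``absorbed once $b_0$ is large'' is false.

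The missing ingredient is a sign, and this is exactly where the hypothesis $0\in\Omega$ enters; your proof never uses it. By Lemma~\ref{app mun negatif}, $\mu_n<0$ on $\partial\Omega$, so $d\mu(\nabla^S d)\le 0$ on a thin collar $\{d\le a\}$ (take $r$ small). Then $-h'\,d(\log\mu)(\nabla^S d)\,A(\nabla^S u)\ge 0$. Only the bounded contributions of $-a_0d_S(x_0,\cdot)^2$ and $\psi$ remain, and these are absorbed once $h'$ is large. This gives $v'\ge c_1h'\,\mathrm{diag}(1,\dots,1,b_0)$ in a frame adapted to $\nabla^Sd$, with $c_1$ independent of $b_0$ and $c_0$. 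You also should not impose $r\le 1/b_0$, since $r$ is given in the statement. Instead choose $c_0$ after $b_0$ and $r$, so that $h'\ge c_0b_0e^{-b_0r}$ is large.

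\textbf{The inequality itself.} Your first attempt correctly locates the difficulty: $c''h'\sigma_{m-1}$ must beat $C_2h'$, with no free parameter. The ``rescale $d$'' and ``add $-a_1d$'' ideas cannot help, because changing the amplitude or scale of the barrier multiplies $v'$ and $|\nabla^S v|$ by the same factor; discard them. Your final dichotomy does work and is a genuinely different route from the paper, but it must be stated with the constants in the right order.
\begin{itemize}
\item Fix $L$ with $c_1(n-m+1)L\ge 4C_2$. Write $(\partial\mathcal G_m/\partial q_{ij})=PFP$ with $P^2=A^{-1}\ge I$.
\item If $\sigma_{m-1}\ge L$: $\sum\frac{\partial\mathcal G_m}{\partial q_{ij}}v'_{ij}\ge c_1h'\,\mathrm{tr}(PFP)\ge c_1h'(n-m+1)\sigma_{m-1}\ge 4C_2h'$.
\item If $\sigma_{m-1}<L$: with $S_k=\sigma_k/\binom{n}{k}$, Newton's inequalities give $S_{m-1}\ge S_1^{1/(m-1)}S_m^{(m-2)/(m-1)}$. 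Since $\sigma_m=f\ge c>0$, this bounds $\sigma_1$, hence $|\gamma|^2=\sigma_1^2-2\sigma_2\le\sigma_1^2$. So $\gamma$ lies in a compact subset of $\Gamma_m$ depending only on $L$ and $\inf f$, on which $\min_\alpha\sigma_{m-1,\alpha}\ge c_L>0$. Then $PFP\ge c_LI$ and the sum is at least $c_L\,\mathrm{tr}(v')\ge c_Lc_1b_0h'$.
\item In both cases the left side beats $C_2(1+|\nabla^Sv|)\le C_2(1+3h')$ once $b_0$ is large, because $L$ and $c_L$ do not depend on $b_0$.
\end{itemize}

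The paper avoids the case split. Concavity of $\mathcal F_m^{1/m}$, together with $\mathcal F_m(p,r')\ge F_m(r')$ for positive $r'$, gives
$\sum\frac{\partial\mathcal G_m}{\partial q_{ij}}v'_{ij}\ge m\,\mathcal G_m^{1-\frac1m}[u]\,\mathcal F_m^{\frac1m}(\nabla^Su,v')\ge c\,F_m(v')^{\frac1m}\ge c\,b_0^{\frac1m}h'$.
That is a uniform gain of $b_0^{1/m}$ in one line. Your argument instead uses only ellipticity plus a compactness property of $\Gamma_m$.
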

Here and below we suppose that $x=(x',x_n)$ in a basis $e_1^0,\ldots,e_n^0$ such that $e_1^0,\ldots,e_{n-1}^0$ are tangent and $e_n^0$ is normal to $\partial\Omega$ at $x_0,$ and, if $x_n=\rho(x')$ is a local description of $\partial\Omega$, we denote by $\psi(x')$ the function $\psi(x',\rho(x')).$
\begin{proof}
Let us first recall the following inequalities: for $\mathcal{F}_m(p,q')=F_m(A^{-1}(p)q'),$ the concavity of $\mathcal{F}_m^{\frac{1}{m}}$ with respect to $q'$ in the convex cone $\Gamma_m(p)$ implies that
$$\sum_{i,j}\frac{\partial\mathcal{F}_m}{\partial q'_{ij}}(p,q')\ r'_{ij}\geq m\ \mathcal{F}_m^{1-\frac{1}{m}}(p,q')\ \mathcal{F}_m^{\frac{1}{m}}(p,r')$$  
for all $q',r'\in\Gamma_m(p)$, with 
\begin{equation}\label{est C2b cal Fm geq Fm}
\mathcal{F}_m(p,r')\geq F_m(r')
\end{equation}
if $r'$ is positive; see \cite[Lemma 4.7]{Bay1} for that last inequality. With $q'=q+d(\log\mu)_x(p)\ A(p)$ and $\mathcal{G}_m(x,p,q)=\mathcal{F}_m(p,q')$ it implies the following: if $u$ is admissible and $(v'_{ij})_{ij}$ is positive then
\begin{equation}\label{sum F_muvijFmv}
\sum_{i,j}\frac{\partial \mathcal{G}_m}{\partial q_{ij}}[u]\ v'_{ij}\ \geq\ m\ \mathcal{G}_m^{1-\frac{1}{m}}[u]\ \mathcal{F}_m^{\frac{1}{m}}(\nabla^Su,(v'_{ij})_{ij})\ \geq\ cF_m^{\frac{1}{m}}[(v'_{ij})_{ij}]
\end{equation}
for some positive constant $c$, where $F_m[(v'_{ij})_{ij}]$ stands for the $m^{th}$ symmetric function of the eigenvalues of $(v'_{ij})_{ij}$. Let us set $\Omega_a:=\{x\in\Omega|\ d_S(x,\partial\Omega)<a\}.$ If $a$ is sufficiently small, for all $x\in \Omega_a$ there exists a unique $y\in\partial\Omega$ such that $d_S(x,y)=d_S(x,\partial\Omega).$ Let us fix $x\in\Omega_a$ and consider $e_1,\ldots,e_{n-1}$ an orthonormal basis of principal directions of $\partial\Omega$ at the point $y$ corresponding to $x,$ completed with the inward unit normal $e_n$ at $y.$ Then the hessian of $d$ in the orthonormal basis at $x$ obtained from $e_1,\ldots,e_n$ by parallel transport along the geodesic between $y$ and $x$ is the diagonal matrix
\begin{equation}\label{curv tub}
\mbox{diag}\left(-\frac{\kappa'_i\cos d+\sin d}{\cos d-\kappa'_i\sin d},\ 1\leq i\leq n-1,\ 0\right)
\end{equation}
where $\kappa',\ldots,\kappa'_{n-1}$ are the principal curvatures of $\partial\Omega$ with respect to the inward normal at $y.$ See for instance \cite[Chapter 6.3]{Gr}, and especially Theorem 6.14, from which one can see that the principal curvatures of a tubular hypersurface at distance $d$ in $\mathbb{S}^n$ are obtained by integration of the equation $z'=z^2+1$ between $0$ and $d$, which easily yields (\ref{curv tub}) (noticing that the hessian of the distance function is the opposite of the second fundamental form of the boundary w.r.t. the inner normal). Denoting here $r=d_S(x_0,.)$, we have 
\begin{equation}\label{nabla v lem1 v est C2b}
\nabla^S v=-a_0\nabla^S(r^2)-h'\nabla^S d+\nabla^S\psi
\end{equation}
and 
\begin{equation}\label{maj nabla v lem1 v est C2b}
|\nabla^S v|\leq a_0|\nabla^S(r^2)|+h'+|\nabla^S\psi|\leq 3h'
\end{equation} 
if $h'$ is large. Moreover,
$$\nabla^S dv=h'\left(-\frac{a_0}{h'}\nabla^S d (r^2)+D+\frac{1}{h'}\nabla^S d(\psi(x'))\right)$$
where $D$ is the diagonal matrix
$$D=\mbox{diag}\left(\frac{\kappa'_i\cos d+\sin d}{\cos d-\kappa'_i\sin d},\ 1\leq i\leq n-1,\ b_0\right).$$
Let us suppose that $b_0\geq 1$ and fix
$$\delta_0=\frac{1}{2}\inf_{\partial\Omega}\left(\min(\kappa'_1,\ldots,\kappa'_{n-1},1)\right).$$
Since $D$ tends uniformly to $\mbox{diag}(\kappa'_1,\ldots,\kappa'_{n-1},b_0)$ as $d$ tends to 0, there exists $a>0$ such that if $d\leq a$ then $D\geq\delta_0\ \mbox{diag}(1,\ldots,1,b_0).$ We deduce that $(v_{ij})_{ij}\geq h'\frac{\delta_0}{2}\ \mbox{diag}(1,\ldots,1,b_0)$ if $h'$ is large. Let us note that $d\mu(\nabla^Sd)=\mu_n<0$ on $\partial\Omega$ by Lemma \ref{app mun negatif} in the appendix, and therefore that $d\mu(\nabla^Sd)\leq 0$ on $\Omega_a$ if $a$ is small. Using (\ref{nabla v lem1 v est C2b}) we deduce that
$$d(\log\mu)(\nabla^Sv)\ A(\nabla^Su)\geq d(\log\mu)(-a_0\nabla^S(r^2)+\nabla^S\psi)\ A(\nabla^Su)\geq-cI.$$
It implies that
\begin{eqnarray*}
(v'_{ij})_{ij}=\nabla^Sdv+d(\log\mu)(\nabla^Sv)\ A(\nabla^Su)&\geq& \nabla^Sdv-cI\\
&\geq& h'\left(\frac{\delta_0}{2}\ \mbox{diag}(1,\ldots,1,b_0)-\frac{c}{h'}I\right)\\
&\geq& h'\frac{\delta_0}{4}\ \mbox{diag}(1,\ldots,1,b_0)
\end{eqnarray*}
if $h'$ is sufficiently large. In particular $(v'_{ij})_{ij}$ is positive and 
$$F_m[(v'_{ij})_{ij}]\geq h'^m\frac{\delta_0^m}{4^m}\sigma_m(1,\ldots,1,b_0)\geq h'^m\frac{\delta_0^m}{4^m}b_0\sigma_{m-1}(1,\ldots,1)\geq C h'^m$$
where $C$ is a constant as large as desired. Finally, since $|\nabla^S v|\leq 3h'$ by (\ref{maj nabla v lem1 v est C2b}), we have $h'=\frac{1}{2}h'+\frac{1}{2}h'\geq \frac{1}{2}+\frac{1}{6}|\nabla^S v|\geq \frac{1}{6}(1+|\nabla^S v|)$ and
$$F_m[(v'_{ij})_{ij}]\geq C\left(1+|\nabla^S v|\right)^m,$$
which, in view of (\ref{sum F_muvijFmv}), gives the result (\ref{est C2b ineq vpij}).
\end{proof}
\subsubsection{Estimates of the mixed second derivatives}
We suppose that $e_1^0,\ldots,e_n^0$ is an orthonormal basis of the euclidean space $\R^n$ such that $e_1^0,\ldots,e_{n-1}^0$ are tangent to $\partial\Omega$ at $x_0$ and $e_n^0$ is inward-directed and normal to $\partial\Omega$ at $x_0$. Let $\rho:\R^{n-1}\rightarrow\R$ be such that $x_n=\rho(x')$ describes $\partial\Omega$ near $x_0.$ We have $x_0=(x_0',\rho(x_0'))$ and $d\rho_{x_0'}=0.$ We define the basis $e_i=1/\lambda\ e_i^0,$ $i=1,\ldots,n,$ which is orthonormal with respect to $g_S$ (and depends on the position $x$), and set
$$\xi(x)=e_t+\rho_t(x')e_n,\hspace{.5cm}g(x,p)=g_S(p,\xi(x))=p_t+\rho_t(x')p_n$$
and define $W$ by (\ref{def W der sec bord}) and $\widetilde{W}$ by (\ref{def Wtilde estC2b}). Since $u=\varphi$ on $\partial\Omega$ we have $u_s+\rho_su_n=\varphi_s+\rho_s\varphi_n$ on $\partial\Omega$ and
$$\sum_{s=1}^{n-1}(u_s-u_s(x_0))^2\leq 2\sum_{s=1}^{n-1}(\varphi_s-\varphi_s(x_0))^2+4(|\varphi_n|^2+1)|\nabla^S\rho|^2$$
where $|\nabla^S\rho|^2=\sum_{s=1}^{n-1}(\rho_s)^2.$ So, setting
\begin{eqnarray*}
\psi(x')&=&\exp(-C_1\varphi_\xi(x_0))\\
&&-\exp(-C_1\varphi_\xi(x'))\exp\left(C_1K\sum_{s=1}^{n-1}(\varphi_s(x')-\varphi_s(x_0))^2+2C_1K(|\varphi_n(x')|^2+1)|\nabla^S\rho(x')|^2\right)
\end{eqnarray*}
where, for a function $h$ defined on a neighborhood of $x_0$, we use $h(x')$ to denote $h(x',\rho(x'))$, for constants $a_0,b_0$ and $c_0$ sufficiently large the function $v$ defined by (\ref{def v estC2b}) is such that:
\begin{itemize}
\item $v(x_0)=\widetilde{W}(x_0);$
\item $v\leq \widetilde{W}$ on $\partial(\Omega\cap B_r(x_0));$
\item $\displaystyle{\sum_{i,j}\frac{\partial\mathcal{G}_m}{\partial q_{ij}}[u]\ v'_{ij}> C_2(1+|\nabla^Sv|)}$ in $\Omega\cap B_r(x_0).$
\end{itemize}
We deduce from Lemma \ref{est C2b ppe comp w'} that $v\leq \widetilde{W}$ in $\overline{\Omega}\cap \overline{B}_r(x_0),$ which implies the estimate 
\begin{equation}\label{est unn vn Wn}
v_n(x_0)\leq\widetilde{W}_n(x_0).
\end{equation} 
Since $\partial_{e_n}(g(x,\nabla^Su(x)))=u_{tn}+\mathcal{C}$, where $u_{tn}=\nabla^Sdu(e_t,e_n)$ and $\mathcal{C}$ is a controlled term, we obtain a lower bound for $u_{tn}$ at $x_0.$ An upper bound of $u_{tn}$ is obtained similarly by taking $\xi(x)=-e_t+\rho_t(x')e_n.$
$ $\\
\subsubsection{Estimates of the double normal second derivatives} We still write the equation of prescribed curvature in the form (\ref{def Fmxpq estC2b})-(\ref{estC2b eqn Gm=f}). Let us denote by $\gamma$ the inner unit vector field normal to the boundary and by $A_m$ the coefficient of $u_{\gamma\gamma}$ in $\mathcal{G}_m(x,\nabla^Su,\nabla^Sdu).$ For $x\in\partial\Omega,$ if $e_1,\ldots,e_n$ is an orthonormal basis of $T_xB^n$ (with respect to $g_S$) such that $e_1,\ldots,e_{n-1}$ are tangent to $\partial\Omega$ and $e_n=\gamma,$ we have
$$A_m= \frac{1-|p_u'|^2}{1-|p_u|^2}\ \mathcal{F}_{m-1}(p_u',q_u')$$
where $p_u=(u_i)_{1\leq i\leq n}$, $p'_u=(u_i)_{1\leq i\leq n-1}$ and 
\begin{eqnarray*}
q_u'&=&\left(u_{ij}+d(\log\mu)(\nabla^S u)\left(\delta_{ij}-u_iu_j\right)\right)_{1\leq i,j\leq n-1}\\
&=&\nabla^Sdu_{|T\partial\Omega}+d(\log\mu)(\nabla^S u)\ g_{\partial\Omega};
\end{eqnarray*}
here $\displaystyle{g_{\partial\Omega}=(\delta_{ij}-\varphi_i\varphi_j)_{1\leq i,j\leq n-1}}$ only depends on the boundary data. Denoting by $\partial^S$ the covariant derivative induced on $\partial\Omega$ by the metric $g_S$ and by $II^S_{\partial\Omega}$ the second fundamental form of $\partial\Omega$ (still computed with respect to $g_S$), since $u=\varphi$ on $\partial\Omega$ we have
$$\nabla^Su=\partial^S\varphi+u_\gamma \gamma\hspace{.5cm}\mbox{and}\hspace{.5cm}\nabla^S du_{|T\partial\Omega}=\partial^Sd\varphi-u_\gamma II^S_{\partial\Omega},$$
so that
$$A_m=\frac{1-|\partial^S\varphi|^2}{1-|\nabla^S u|^2}\ \mathcal{F}_{m-1}\left(\partial^S\varphi,\partial^Sd\varphi+d(\log\mu)(\partial^S\varphi)\ g_{\partial\Omega}-u_\gamma\left(II^S_{\partial\Omega}-\frac{\mu_\gamma}{\mu}g_{\partial\Omega}\right)\right).$$
Let us first note that the obtention of a lower bound of $u_{\gamma\gamma}$ is straightforward: since $u$ is admissible, we have $\mathcal{G}_1[u]>0$, and the coefficient in $\mathcal{G}_1[u]$ of $u_{\gamma\gamma}$ is
$$A_1=\frac{1-|\partial^S\varphi|^2}{1-|\nabla^S u|^2},$$
which is positive and bounded from below, meanwhile the other second derivatives of $u$ which appear in $\mathcal{G}_1[u]$ are bounded (by the estimates of the double tangent and mixed second derivatives obtained above). We focus on the obtention of an upper bound of $u_{\gamma\gamma}.$ We set
\begin{equation*}
g(x,p)= \mathcal{F}_{m-1}\left(\partial^S\varphi,\partial^Sd\varphi+d(\log\mu)(\partial^S\varphi)\ g_{\partial\Omega}-g_S(p,\gamma)\left(II^S_{\partial\Omega}-\frac{\mu_\gamma}{\mu}g_{\partial\Omega}\right)\right)
\end{equation*}
for all $x\in\partial\Omega$ and all $p\in T_xB^n$ such that $g_S(p,p)<1.$ At that point, since we need a concave map $p\mapsto g(x,p)$ we restrict to the case $m=2$: the map $p\mapsto g(x,p)$ is then linear and in particular concave. We use the following observation of Trudinger \cite{ILT,Tr95}:  it is sufficient to bound $u_{\gamma\gamma}$ at a point $y\in\partial\Omega$ which minimizes $x\mapsto g(x,\nabla^Su(x))$ on $\partial\Omega$ in order to obtain a global bound of $u_{\gamma\gamma}$ on $\partial\Omega$. See also \cite{Bay1} for the Dirichlet problem in $\R^{n,1}.$ We suppose that on a neighborhood of $y$ the boundary of $\Omega$ is described as $\partial\Omega=\{(x',x_n),\ x_n=\rho(x')\}$ where $\rho:\R^{n-1}\rightarrow\R$ is a function defined locally so that, for $y=(y',\rho(y'))$, $d\rho_{y'}=0$. We extend $\gamma$ on a neighborhood $\overline{\Omega}\cap B(y,r)$ of $y$ in a unit vector field such that $\nabla^S_{\gamma}\gamma(y)=0$ and also extend $g$ by the formula 
\begin{eqnarray}
g(x,p)&=& \mathcal{F}_{1}\left(\partial^S\varphi,\partial^Sd\varphi+d(\log\mu)(\partial^S\varphi)\ g_{\partial\Omega}\right)(x',\rho(x'))\label{def g estC2b nn ext}\\
&&-g_S(p,\gamma(x))\ \mathcal{F}_1\left(\partial^S\varphi,II^S_{\partial\Omega}-(\log\mu)_\gamma\ g_{\partial\Omega}\right)(x',\rho(x'))\nonumber
\end{eqnarray}
for all $x\in \overline{\Omega}\cap B(y,r)$ and all $p\in T_xB^n$ such that $g_S(p,p)<1.$ As above we consider the test-function
$$W(x)=g(x,\nabla^S u(x))-\frac{K}{2}\sum_{s=1}^{n-1}(u_s(x)-u_s(y))^2$$
and
$$\widetilde{W}(x)=\exp\left(-C_1g(y,\nabla^S u(y))\right)-\exp\left(-C_1W(x)\right)-bd_S(x,y)^2.$$
Since $y$ minimizes $g(x,\nabla^Su(x))$ on $\partial\Omega,$ we have the bound on $\partial\Omega$
$$\widetilde{W}\geq \exp\left(-C_1g(y,\nabla^S u(y))\right)\left(1-\exp\left(C_1\frac{K}{2}\sum_{s=1}^{n-1}(u_s(x)-u_s(y))^2\right)\right)-bd_S(x,y)^2.$$
By Lemma \ref{lem ineq Wtilde}, if $b$ is large $\widetilde{W}$ is such that
$$\sum_{i,j}\frac{\partial\mathcal{G}_2}{\partial q_{ij}}[u]\ \widetilde{W'}_{ij}\leq C_2(1+|\nabla^S\widetilde{W}|).$$
For the barrier function we take $v=-a_0d_S(.,y)^2-h(d)+\psi(x')$
with $h(d)=c_0(1-e^{-b_0d})$ and
\begin{eqnarray*}
\psi(x')&=&\exp\left(-C_1g(y,\nabla^S u(y))\right)\times\\
&&\left\{1-\exp \left(C_1K\sum_{s=1}^{n-1}(\varphi_s(x')-\varphi_s(y))^2+2C_1K\left(|\varphi_n(x')|^2+1\right)|\nabla^S\rho(x')|^2\right)\right\}.
\end{eqnarray*}
If $a_0,b_0$ and $c_0$ are sufficiently large we have:
\begin{itemize}
\item $v(y)=\widetilde{W}(y);$
\item $v\leq \widetilde{W}$ on $\partial(\Omega\cap B_r(y));$
\item $\displaystyle{\sum_{i,j}\frac{\partial\mathcal{G}_2}{\partial q_{ij}}[u]\ v'_{ij}> C_2(1+|\nabla^Sv|)}$ in $\Omega\cap B_r(y).$
\end{itemize}
We deduce that $v\leq \widetilde{W}$ in $\Omega\cap B_r(y),$ which implies the estimate 
\begin{equation}\label{est unn vn Wn}
v_n(y)\leq\widetilde{W}_n(y).
\end{equation} 
By (\ref{def g estC2b nn ext}) we have at $y$
$$\partial_{e_n}(g(x,\nabla^Su(x)))=-u_{\gamma\gamma}\ \mathcal{F}_1\left(\partial^S\varphi, II^S_{\partial\Omega}-(\log\mu)_\gamma\ g_{\partial\Omega}\right)$$
and (\ref{est unn vn Wn}) implies that
$$u_{\gamma\gamma}\ \mathcal{F}_{1}\left(\partial^S\varphi,II^S_{\partial\Omega}-(\log\mu)_\gamma\ g_{\partial\Omega}\right)\leq C$$
for some controlled constant $C.$ Since $\Omega$ is strictly convex with respect to $g_S$ and $0$ belongs to $\Omega,$ we have $\mu_\gamma\leq 0$ (Lemma \ref{app mun negatif}) and the quadratic form $II^S_{\partial\Omega}-\frac{\mu_\gamma}{\mu}g_{\partial\Omega}$ is positive definite, which implies by (\ref{est C2b cal Fm geq Fm})
$$\mathcal{F}_{1}\left(\partial^S\varphi,II^S_{\partial\Omega}-\frac{\mu_\gamma}{\mu}g_{\partial\Omega}\right)\geq F_1\left(II^S_{\partial\Omega}\right)\geq\inf_{\partial\Omega}F_1\left(II^S_{\partial\Omega}\right)>0.$$
An upper bound of $u_{\gamma\gamma}$ follows.

\appendix
\section{Computations on second fundamental forms}\label{appendix computations}
\subsection{Second fundamental form of a graph}
\begin{lem}
For a spacelike hypersurface $M=\hbox{graph}(u),$ in the chart $x\in B^n\mapsto (x,u(x))\in M$ the upward unit normal vector is given by
\begin{equation}\label{expression N}
N=\frac{1}{\mu}\frac{1}{\sqrt{1-|\nabla^S u|^2}}(\nabla^S u, 1)
\end{equation}
and the second fundamental form by
\begin{equation}\label{lem expr h u}
h=\frac{\mu}{\sqrt{1-|\nabla^S u|^2}}\left(\nabla^Sdu+d(\log\mu)(\nabla^S u)\ (g_S-du\otimes du)\right)
\end{equation}
where $\nabla^Su$ and $\nabla^Sdu$ stand for the gradient and the hessian of $u$ with respect to the metric $g_S.$
\end{lem}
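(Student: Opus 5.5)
We compute directly in the chart $x\mapsto(x,u(x))$ of $M=\mbox{graph}(u)$ in $\widetilde{AdS}^{n,1}=B^n\times\R$. The first step uses the conformal structure $g=\mu^2(g_S-dt^2)$: normality and unit length of a vector field depend only on the metric up to the factor $\mu^2$. The tangent space of $M$ is spanned by the vectors $X+du(X)e_{n+1}$ with $X\in TB^n$. The vector $(\nabla^Su,1)$ satisfies $g_S(\nabla^Su,X)-du(X)=0$, so it is $g$-orthogonal to every such tangent vector. Its squared norm is $g((\nabla^Su,1),(\nabla^Su,1))=\mu^2(|\nabla^Su|^2-1)<0$. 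Normalizing, and choosing the sign so that $dt(N)>0$, gives (\ref{expression N}).

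The second step computes $h(X,Y)=-g(\widetilde{\nabla}_{\tilde X}\tilde Y,N)$ for the lifts $\tilde X=X+du(X)e_{n+1}$. The sign is fixed by $S=\widetilde{\nabla}N$, so that $h(X,Y)=g(\widetilde{\nabla}_{\tilde X}N,\tilde Y)$. Rather than using the $g_H$-based formula (\ref{notation C1 dev cov Y}), we treat the problem conformally. Let $\widehat{\nabla}$ be the Levi-Civita connection of the product metric $\hat g=g_S-dt^2$. The standard conformal change formula, with $g=e^{2\omega}\hat g$ and $\omega=\log\mu$, reads
$$\widetilde{\nabla}_AB=\widehat{\nabla}_AB+d\omega(A)B+d\omega(B)A-\hat g(A,B)\hat\nabla\omega.$$
We then proceed in two parts.

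First, for the product metric the computation is classical. We have $\widehat{\nabla}_{\tilde X}\tilde Y=\nabla^S_XY+\big(\nabla^Sdu(X,Y)+du(\nabla^S_XY)\big)e_{n+1}$. Pairing with $\hat N=(\nabla^Su,1)/\sqrt{1-|\nabla^Su|^2}$ gives the familiar Minkowski-type formula $\hat h=\nabla^Sdu/\sqrt{1-|\nabla^Su|^2}$.

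Second, we handle the conformal correction. Take $N=\hat N/\mu$. The terms $d\omega(\cdot)\tilde Y$ and $d\omega(\cdot)\tilde X$ are tangent, so they drop out when paired with the normal. Only $-\hat g(\tilde X,\tilde Y)\hat\nabla\omega$ contributes. Here $\hat g(\tilde X,\tilde Y)=g_S(X,Y)-du(X)du(Y)$, and since $\omega$ is independent of $t$, $\hat\nabla\omega=\nabla^S\log\mu$. Then
$$\hat g(\hat\nabla\omega,\hat N)=\frac{d(\log\mu)(\nabla^Su)}{\sqrt{1-|\nabla^Su|^2}}.$$

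Finally, we track the factors and signs. We have $g=\mu^2\hat g$, and the unit normal carries the factor $1/\mu$, so the overall factor is $\mu$. Combining,
$$h=\frac{\mu}{\sqrt{1-|\nabla^Su|^2}}\Big(\nabla^Sdu+d(\log\mu)(\nabla^Su)(g_S-du\otimes du)\Big),$$
which is (\ref{lem expr h u}).

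The main obstacle is keeping the signs consistent, which requires pinning down the Lorentzian sign convention for $h$ relative to $S=\widetilde{\nabla}N$ and the time orientation. We will check these against two known cases. For $u$ constant and $x=0$, the slice $t=0$ is totally geodesic there ($\nabla^S\mu(0)=0$), which matches $h=0$. For a slice $u\equiv c$ in general, the formula gives $h=\mu\nabla^Sdu=0$, consistent with the time slices being totally geodesic; this follows from the time-symmetry $t\mapsto -t$, which is an isometry.
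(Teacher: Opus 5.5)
Your argument is correct and is essentially the paper's. Both compute the second fundamental form $\nabla^Sdu/\sqrt{1-|\nabla^Su|^2}$ for the product metric $g_S-dt^2$ and then add the conformal correction $d\mu(N')\,(g_S-du\otimes du)$ coming from $g=\mu^2(g_S-dt^2)$. The only difference is that the paper applies the conformal change formula to $\widetilde{\nabla}N$, obtaining $S=\frac{1}{\mu}(S'+d(\log\mu)(N')\,id)$, whereas you apply it to $\widetilde{\nabla}_{\tilde X}\tilde Y$ and discard the tangential terms.

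One small remark: your closing sign checks with $u$ constant are vacuous, since every term vanishes there. This is harmless, because your derivation already fixes the signs correctly.
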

\begin{proof}
The vector
\begin{equation}\label{expression N'}
N'=\frac{1}{\sqrt{1-|\nabla^S u|^2}}(\nabla^S u, 1)
\end{equation}
is the unit vector normal to $M$ with respect to the product metric $g'=g_S-dt^2$ on $B^n\times\R$. Since $g=\mu^2 g'$ we have $N=1/\mu\ N'$ and we obtain (\ref{expression N}). Let us denote by $\widetilde{\nabla}'$ the Levi-Civita connection of $g'$ and set $S'(X)=\widetilde{\nabla}'_XN'$ ($N'$ and $S'$ are respectively the unit normal and the shape operator of $M$ with respect to the metric $g'$). Since $g=\mu^2 g',$ by the Koszul formula we have
$$\widetilde{\nabla}_XY-\widetilde{\nabla}'_XY=X.\log\mu\ Y+Y.\log\mu\ X-g'(X,Y)\widetilde{\nabla}'(\log\mu)$$
where $\widetilde{\nabla}'(\log\mu)$ is the gradient of $\log\mu$ with respect to $g'.$ It implies that
$$\widetilde{\nabla}_XN=\frac{1}{\mu}\widetilde{\nabla}'_XN'+\frac{1}{\mu^2}N'.\mu\ X$$
for all $X\in TM,$ which reads $S=\frac{1}{\mu}\left(S'+d(\log\mu)(N')\ id\right);$ setting $h'(X,Y)=g'(S'(X),Y)$ we deduce that
\begin{equation}\label{relation h h'}
h(X,Y)=\mu h'(X,Y)+d\mu(N')g'.
\end{equation}
We first compute $h'.$ We consider the basis of $TM$ induced by the chart $x\mapsto (x,u(x))$, 
\begin{equation}\label{expression partial i u}
\partial_i=e_i+u_i e_{n+1},\hspace{.5cm}i=1,\ldots,n.
\end{equation}
Here $e_1,\ldots,e_n$ is a basis of $TB^n$ and $e_{n+1}$ spans the $\R$-factor of $\widetilde{AdS}^{n,1}=B^n\times\R.$ By (\ref{expression N'}) we have
$$S'(\partial_i)=\widetilde{\nabla}'_{\partial_i}N'=\frac{1}{\sqrt{1-|\nabla^S u|^2}}\left(\widetilde{\nabla}'_{\partial_i}(\nabla^S u,1)\right)^{T'},$$
where the exponent $T'$ means that we take the tangential component of the vector with respect to $g'.$ Since $\widetilde{\nabla}'$ is the Levi-Civita connection with respect to the product metric $g'=g_S-dt^2$ we deduce that 
\begin{equation}\label{expression h'}
h'_{ij}=g'(S'(\partial_i),\partial_j)=\frac{1}{\sqrt{1-|\nabla^S u|^2}}\nabla^S_{ij}u.
\end{equation}
In that formula $\nabla^S_{ij}u$ stands for $\nabla^Sdu(e_i,e_j),$ where $\nabla^Sdu$ is the hessian of $u$ with respect to $g_S.$ We now observe that the second term in (\ref{relation h h'}) is given by
\begin{equation}\label{expression dmu N'}
d\mu(N')=\frac{1}{\sqrt{1-|\nabla^S u|^2}}d\mu(\nabla^S u),
\end{equation}
by (\ref{expression N'}) and since $\mu$ is independent of the second factor of $B^n\times\R$. So, using (\ref{expression h'}) and (\ref{expression dmu N'}), (\ref{relation h h'}) reads
$$h_{ij}=\mu h'_{ij}+d\mu(N')g'_{ij}=\frac{\mu}{\sqrt{1-|\nabla^S u|^2}}\left(\nabla^S_{ij}u+d(\log\mu)(\nabla^S u)\ g'_{ij}\right).$$
Finally, using (\ref{expression partial i u}) we get $g'_{ij}=(g_S-dt^2)(\partial_i,\partial_j)=g_{S}(e_i,e_j)-u_iu_j$ and obtain (\ref{lem expr h u}).
\end{proof}
\subsection{Comparison of second fundamental forms}\label{app rel ff}
Let us suppose that $M$ is a spacelike hypersurface of $\widetilde{AdS}^{n,1}$ such that $\partial M$ is the graph of $\varphi:\partial \Omega\subset B^n\rightarrow\R$ and denote by \begin{eqnarray*}
F:\partial\Omega&\rightarrow&\partial M\subset B^n\times\R\\
x&\mapsto& (x,\varphi(x))
\end{eqnarray*}
the natural parametrization of $\partial M$ and by $n'\in\R^n$ the vector field normal to $\partial\Omega$, inward-directed and such that $g_H(n',n')=1.$ 
\begin{lem}\label{app rel ff}
The formula
$$\langle II_{\partial M}(F_*X,F_*X),n'\rangle=II_{\partial\Omega}^{H}(X,X)+\mu\mu_{n'}d\varphi(X)^2$$
holds for all $X\in T\partial\Omega$, where $II_{\partial M}$ is the second fundamental form of $\partial M$ in $\widetilde{AdS}^{n,1}$.
\end{lem}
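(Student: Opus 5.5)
The plan is to compute the left-hand side directly from the connection formula (\ref{notation C1 dev cov Y}), after first checking that $n'$ is a legitimate normal vector to $\partial M$. Throughout I fix $x\in\partial\Omega$ and $X\in T_x\partial\Omega$, and extend $X$ to a vector field tangent to $\partial\Omega$ near $x$. I regard $X$ as a vector field on $B^n\times\R$ that does not depend on $t$.

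\emph{Step 1: $n'$ is normal to $\partial M$.} We have $F_*X=X+d\varphi(X)\,e_{n+1}$. Since $g=\mu^2(g_S-dt^2)$ and $n'$ is horizontal,
$$g(n',F_*X)=\mu^2g_S(n',X)=g_H(n',X)=0,$$
because $n'$ is $g_H$-normal to $\partial\Omega$. Hence $n'\in(T\partial M)^{\perp}$. It follows that
$$\langle II_{\partial M}(F_*X,F_*X),n'\rangle=\langle\widetilde{\nabla}_{F_*X}(F_*X),n'\rangle,$$
since the component of $\widetilde{\nabla}_{F_*X}F_*X$ tangent to $\partial M$ is orthogonal to $n'$.

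\emph{Step 2: compute $\widetilde{\nabla}_{F_*X}F_*X$.} Set $Y=F_*X=X+(X.\varphi)e_{n+1}$, extended independently of $t$. Its horizontal part $Y'=X$ is independent of $t$, so formula (\ref{notation C1 dev cov Y}) applies with $Y^{n+1}=X^{n+1}=d\varphi(X)$. It gives
$$\widetilde{\nabla}_YY=\nabla^H_XX+d\varphi(X)^2\,\mu\nabla^H\mu+(\mbox{a multiple of } e_{n+1}).$$
A vertical term $e_{n+1}$ of that form would also arise from differentiating the coefficient $X.\varphi$, and it is harmless for the same reason.

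\emph{Step 3: pair with $n'$.} The vector $n'$ is horizontal, so it is $g$-orthogonal to $e_{n+1}$. On horizontal vectors, $g$ coincides with $g_H$. Therefore
$$\langle\widetilde{\nabla}_YY,n'\rangle=g_H(\nabla^H_XX,n')+\mu\,d\varphi(X)^2\,g_H(\nabla^H\mu,n')=II^H_{\partial\Omega}(X,X)+\mu\mu_{n'}\,d\varphi(X)^2.$$
Here I use the convention that $II^H_{\partial\Omega}(X,X)=g_H(\nabla^H_XX,n')$ with respect to the inward $g_H$-unit normal $n'$, which is the convention for which convex domains have positive second fundamental form. I also use $g_H(\nabla^H\mu,n')=d\mu(n')=\mu_{n'}$.

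The only delicate point is bookkeeping. One must make sure that the extension of $Y$ satisfies the hypothesis of (\ref{notation C1 dev cov Y}), namely that $Y'$ is $t$-independent, which holds by construction. One must also match the sign convention of $II^H_{\partial\Omega}$ with the inward choice of $n'$. No other obstacle is expected.
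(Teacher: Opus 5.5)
Your proposal is correct and follows essentially the paper's argument: both compute $\langle\widetilde{\nabla}_{F_*X}F_*X,n'\rangle$ with the connection formulas (\ref{notation C1 dev cov en+1})--(\ref{notation C1 dev cov Y}), discard every $e_{n+1}$-component because $n'$ is horizontal, and are left with $g_H(\nabla^H_XX,n')+\mu\, d\varphi(X)^2\, g_H(\nabla^H\mu,n')$. The paper splits $F_*Y=Y+d\varphi(Y)e_{n+1}$ and treats the two pieces separately, whereas you apply (\ref{notation C1 dev cov Y}) to $F_*X$ in one step; you also check explicitly that $n'\perp T\partial M$ and account for the extra vertical term from differentiating $X.\varphi$, two points the paper leaves implicit.
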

\begin{proof}
Let us consider $X,Y\in\Gamma(\partial\Omega)$ and denote $\overline{X}=F_*X=X+d\varphi(X)e_{n+1}$ and  $\overline{Y}=F_*Y=Y+d\varphi(Y)e_{n+1}\in\Gamma(\partial M)$. We have
$$\langle II_{\partial M}(\overline{X},\overline{Y}),n'\rangle=\langle \widetilde{\nabla}_{\overline{X}}\overline{Y},n'\rangle=\langle\widetilde{\nabla}_{\overline{X}}Y,n'\rangle+d\varphi(Y)\langle \widetilde{\nabla}_{\overline{X}}e_{n+1},n'\rangle.$$
We compute the first term on the right-hand side using
$$\widetilde{\nabla}_{\overline{X}}Y=\widetilde{\nabla}_{X}Y+d\varphi(X)\widetilde{\nabla}_{e_{n+1}}Y=\nabla^H_{X}Y+\frac{1}{\mu}d\mu(Y)d\varphi(X)e_{n+1}$$
and the second term using
$${\widetilde{\nabla}}_{\overline{X}}e_{n+1}=\widetilde{\nabla}_{X}e_{n+1}+d\varphi(X)\widetilde{\nabla}_{e_{n+1}}e_{n+1}=\frac{1}{\mu}d\mu(X)e_{n+1}+d\varphi(X)\mu\nabla^H\mu$$
(these formulas are consequences of (\ref{notation C1 dev cov en+1}) and (\ref{notation C1 dev cov Y})) and obtain
$$\langle II_{\partial M}(\overline{X},\overline{Y}),n'\rangle=\langle\nabla^H_{X}Y,n'\rangle+\mu d\varphi(X)d\varphi(Y)\langle\nabla^H\mu,n'\rangle,$$
which gives the formula.
\end{proof}
\begin{lem}\label{app mun negatif}
Let us suppose that $\overline{\Omega}\subset B^n$ is convex with respect to the metric $g_H$ and contains the center $0$ of $B^n.$ If $n'$ is the unit inner vector field normal to $\partial\Omega$ then $\mu_{n'}:=d\mu(n')\leq 0$ on $\partial\Omega.$ Moreover, if 0 belongs to $\Omega$ then $\mu_{n'}<0.$ The same result holds if we consider the metric $g_S$, or the euclidian metric, instead of $g_H.$
\end{lem}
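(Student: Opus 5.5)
We argue on the $g_H$-Euclidean picture of the Poincaré ball, using two facts: $\mu=\frac{1+|x|^2}{1-|x|^2}$ is a strictly increasing function of $|x|$, and $\nabla^H\mu(x)=x$ (computed earlier). Hence for $y\in\partial\Omega$ we have $\mu_{n'}(y)=d\mu_y(n')=g_H(y,n')$, which is a positive multiple of the Euclidean inner product $y\cdot n'$, because $g_H$ is conformal to the Euclidean metric. The claim therefore reduces to a sign condition, $y\cdot n'\le 0$ (resp. $<0$), for the inner normal $n'$. Since $g_H$, $g_S$ and the Euclidean metric are pairwise conformal, the inner normal direction at $y$ is the same for all three metrics. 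So the sign of $\mu_{n'}$ does not depend on which metric is used to normalize $n'$, and only the convexity hypothesis changes between the three cases.

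Step one is to use convexity at $y$. If $\overline\Omega$ is convex with respect to one of the three metrics, then $\overline\Omega$ lies on one side of the totally geodesic hypersurface $\Sigma_y$ through $y$ tangent to $\partial\Omega$ (the supporting hypersurface), namely the side into which $n'$ points. For the Euclidean metric, $\Sigma_y$ is the affine hyperplane $T_y\partial\Omega$. For $g_H$ and $g_S$, $\Sigma_y$ is a sphere or a hyperplane orthogonal, respectively antipodally related, to the unit sphere. In each case $\Sigma_y$ bounds a closed region $R_y$ containing $\overline\Omega$. Convexity gives $\Omega\subset R_y$ by the standard supporting-hypersurface argument: any geodesic segment from $y$ to a point of $\Omega$ stays in $\overline\Omega$, so $R_y$ is the half-space containing the inner normal. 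Because $0\in\overline\Omega$, we get $0\in R_y$.

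Step two is to turn $0\in R_y$ into a sign condition. The geodesic through $y$ in direction $-y$ is the Euclidean diameter segment toward $0$ in all three metrics, since radial lines are geodesics for each of them. Suppose $y\cdot n'>0$. Then the direction $-y$ points strictly out of $R_y$ at $y$. The key point is that a geodesic leaving $\Sigma_y$ transversally toward the outside of $R_y$ never comes back: a geodesic meets a totally geodesic hypersurface at most once unless it is contained in it. The exception is the sphere metric, where geodesics can meet twice, but this does not happen inside the hemisphere $B^n$. So the radial segment from $y$ to $0$ lies outside $R_y$, except at $y$, which contradicts $0\in R_y$. Hence $y\cdot n'\le 0$, and so $\mu_{n'}\le0$.

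For the strict inequality, assume $0\in\Omega$ and $y\cdot n'=0$. Then the radial direction is tangent to $\Sigma_y$. Since radial lines are geodesics, the whole radial line through $y$ lies in $\Sigma_y$, so $0\in\Sigma_y=\partial R_y$. But $\Omega$ is open and contained in $R_y$, so $\Omega$ lies in the interior of $R_y$, and therefore $0\notin\partial R_y$. This is a contradiction, so $\mu_{n'}<0$. The main obstacle is the uniqueness-of-intersection claim for $g_S$. I would handle it by noting that $B^n$ is an open hemisphere, in which two distinct great-sphere arcs meet at most once. Alternatively, I would reduce all three cases to the Euclidean one: for $y\cdot n'$, only the tangent hyperplane matters, and each convexity condition implies the radial segment toward $0$ enters $\Omega$ on the side pointed to by $n'$.
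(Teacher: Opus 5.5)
Your proof is correct and follows the paper's argument. Both proofs use $\nabla^H\mu(x)=x$ to reduce the claim to the sign of $\langle y,n'\rangle$, and both get that sign from the supporting totally geodesic hypersurface at $y$ together with the radial geodesic joining $0$ to $y$. The only difference is in how the sign is read off. The paper first moves $y$ to the origin by an isometry, so that the supporting hypersurface becomes $\{x_n=0\}$ and the geodesic has the form $\gamma(t)=\alpha(t)p$. You instead work in place, using that a geodesic meets a totally geodesic hypersurface at most once. This also covers the $g_S$ case directly, where the open hemisphere has no isometry sending $y$ to $0$.
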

\begin{proof} 
Let us first observe that if $\gamma$ is a geodesic such that $\gamma(0)=p\in\overline{\Omega}$ and $\gamma(1)=q\in\partial\Omega,$ we have $\langle\gamma'(1),n'\rangle\leq 0,$ and that the inequality is strict if $p$ belongs to $\Omega$. Indeed, applying an isometry we may suppose that $q=0$, $T_0\partial\Omega=\{x_n=0\}$ and $\overline{\Omega}\subset\{x_n\leq 0\},$ so that $p=(p_1,\ldots,p_n)$ with $p_n\leq 0.$ Moreover, the geodesic $\gamma$ is of the form $\gamma(t)=\alpha(t)p,$ with $\alpha(0)=1,$ $\alpha(1)=0$ and $\alpha'\leq 0$. Since $\gamma'(1)=\alpha'(1)p$ and $n'=-e_n$ at $q=0$, we obtain $\langle\gamma'(1),n'\rangle=-\alpha'(1)p_n\leq 0$. If $p$ belongs to $\Omega$ then $p_n<0$ and $\alpha'(1)<0$ ($\gamma'$ does not vanish since $p\neq q$ in that case and $\gamma$ is a non trivial geodesic), and the inequality is strict.

Let us assume now that $p=0\in\overline{\Omega}$, $q\in\partial\Omega$ with $q\neq p,$ and consider the geodesic between $p$ and $q,$ $\gamma(t)=\beta(t)q$ with $\beta(0)=0$ and $\beta(1)=1$. By the observation above we have $\langle\gamma'(1),n'\rangle=\beta'(1)\langle q,n'\rangle\leq 0,$ so that $\langle q,n'\rangle\leq 0$, with $\langle q,n'\rangle<0$ if $p\in\Omega.$ A direct computation shows that $\nabla^H\mu(x)=x$ for all $x\in B^n$, and we deduce that
$$\mu_{n'}(q)=\langle \nabla^H\mu\ (q),n'\rangle=\langle q,n'\rangle\leq 0,$$
and that $\mu_{n'}(q)<0$ if $p$ belongs to $\Omega.$ The proofs for the metric $g_S$ or for the euclidian metric are analogous.
 \end{proof}
Lemmas \ref{app rel ff} and \ref{app mun negatif} readily imply the following:
\begin{lem}\label{app lem ff} 
Keeping the notation of Lemma \ref{app rel ff} and assuming moreover that $\overline{\Omega}$ is convex with respect to the hyperbolic metric $g_H$ and contains the center $0$ of $B^n$ we have
$$II_{\partial\Omega}^{H}(X,X)\geq \langle II_{\partial M}(F_*X,F_*X),n'\rangle$$
for all $X\in T\partial\Omega.$
\end{lem}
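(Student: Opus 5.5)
The inequality follows by combining the identity of Lemma \ref{app rel ff} with the sign information of Lemma \ref{app mun negatif}. For $X\in T\partial\Omega$, Lemma \ref{app rel ff} gives
$$\langle II_{\partial M}(F_*X,F_*X),n'\rangle=II_{\partial\Omega}^{H}(X,X)+\mu\,\mu_{n'}\,d\varphi(X)^2 .$$
So it suffices to show that the correction term $\mu\,\mu_{n'}\,d\varphi(X)^2$ is non-positive.

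The three factors have definite signs. First, $\mu=\frac{1+|x|^2}{1-|x|^2}\geq 1>0$ on $B^n$. Second, $d\varphi(X)^2\geq 0$ trivially. Third, $\overline{\Omega}$ is convex with respect to $g_H$ and contains $0$, which are exactly the hypotheses of Lemma \ref{app mun negatif}. That lemma gives $\mu_{n'}=d\mu(n')\leq 0$ on $\partial\Omega$, where $n'$ is the unit inner normal.

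We must check that the $n'$ used here is the one in Lemma \ref{app mun negatif}. In Lemma \ref{app rel ff}, $n'$ is normalized by $g_H(n',n')=1$, and Lemma \ref{app mun negatif} is stated for the $g_H$-unit inner normal. Normalization affects only a positive scaling, so the sign of $\mu_{n'}$ is unaffected in any case.

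Hence $\mu\,\mu_{n'}\,d\varphi(X)^2\leq 0$, and subtracting it yields
$$II_{\partial\Omega}^{H}(X,X)\geq \langle II_{\partial M}(F_*X,F_*X),n'\rangle .$$
There is no real obstacle here: all the work sits in Lemmas \ref{app rel ff} and \ref{app mun negatif}. The only point requiring care is the consistent normalization and orientation of $n'$ across the two lemmas, and, as noted, orientation is the only thing that matters for the sign.
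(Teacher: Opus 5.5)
Your proposal is correct and matches the paper, which simply observes that Lemmas \ref{app rel ff} and \ref{app mun negatif} readily imply the statement. The correction term $\mu\,\mu_{n'}\,d\varphi(X)^2$ is non-positive because $\mu\geq 1$, $\mu_{n'}\leq 0$ and $d\varphi(X)^2\geq 0$, which gives the inequality.
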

We also deduce the following result:
\begin{lem}\label{app lem tt geod H2}
If $\Omega$ is strictly convex with respect to $g_S$ and contains the center $0$ of $B^n$ then every totally geodesic boundary data $M_{\varphi}$ satisfies the convexity assumption (H2). 
\end{lem}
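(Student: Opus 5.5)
Since $M_{\varphi}$ is totally geodesic, it is natural to apply Lemma \ref{app rel ff} together with Lemma \ref{app mun negatif}. The claim to establish is: for every inward-directed spacelike vector $n$ normal to $\partial M_{\varphi}$, the form $\langle II_{\partial M_{\varphi}},n\rangle$ is positive definite. I will first describe the normal plane $(T\partial M_{\varphi})^{\perp}$ at a boundary point. It is a two-dimensional Lorentzian plane containing the unit timelike normal $N$ of $M_{\varphi}$ and the inner unit conormal $N_1$ tangent to $M_{\varphi}$. Every spacelike vector $n$ in the component of $N_1$ can be written $n=a N_1+bN$ with $a>|b|\geq 0$. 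Because $M_{\varphi}$ is totally geodesic, $\langle II_{\partial M_{\varphi}},N\rangle=h_{M_\varphi}|_{T\partial M}=0$. Hence $\langle II_{\partial M_{\varphi}},n\rangle=a\langle II_{\partial M_{\varphi}},N_1\rangle$, and it suffices to prove positivity for a single inward spacelike normal.

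Rather than use $N_1$, I will use the vector $n'$ of Lemma \ref{app rel ff}. This is the horizontal vector in $TB^n$ that is normal to $\partial\Omega$, inward, with $g_H(n',n')=1$. It is orthogonal to $T\partial M_{\varphi}$: for $X\in T\partial\Omega$ we have $\langle X+d\varphi(X)e_{n+1},n'\rangle=g_H(X,n')=0$, since $e_{n+1}$ is $g$-orthogonal to horizontal vectors. It is spacelike. To see that it is inward-directed, note that its projection to $B^n$ points into $\Omega$. Concretely, $\langle n',N_1\rangle>0$: writing $N_1=Y+d\varphi(Y)e_{n+1}$ with $Y$ pointing into $\Omega$, we get $\langle n',N_1\rangle=g_H(n',Y)>0$. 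Equivalently, the component of $n'$ along $N_1$ is positive, so $n'$ lies in the same component as $N_1$.

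Lemma \ref{app rel ff} then gives, for $X\in T\partial\Omega$,
$$\langle II_{\partial M_{\varphi}}(F_*X,F_*X),n'\rangle=II^H_{\partial\Omega}(X,X)+\mu\mu_{n'}d\varphi(X)^2.$$
This is the main obstacle: the sign of $\mu_{n'}$ from Lemma \ref{app mun negatif} is $\leq 0$, so the second term works against positivity, and Lemma \ref{app lem ff} only gives the reverse inequality. I will therefore not rely on this sign. Instead I will use the total geodesy again, through the identity above combined with the decomposition $n'=aN_1+bN$. This gives $\langle II,n'\rangle=a\langle II,N_1\rangle$, so the sign is the same for $n'$ and for $N_1$. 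It follows that the extra term must be controlled directly.

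For this I plan to use the explicit description of $\varphi$. By (\ref{lem expr h u}) with $h=0$ we have $\nabla^Sd\varphi=-d(\log\mu)(\nabla^S\varphi)(g_S-d\varphi\otimes d\varphi)$. I will compute $\langle II_{\partial M_\varphi},N_1\rangle$ directly as the geodesic curvature of $\partial M_{\varphi}$ inside the totally geodesic $M_{\varphi}$, which is isometric to a piece of $\mathbb{H}^n$. Positivity then amounts to convexity of the boundary of a domain in hyperbolic space. To get this convexity, I will write $M_{\varphi}$ via an isometry moving it to the slice $\{t=0\}$; its image is $\{t=0\}$, i.e.\ $\mathbb{H}^n$ with metric $g_H$. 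I will then argue by the continuity method of Lemma \ref{est C1 lem Omega convex}, or deform $\varphi$ through totally geodesic data $\varphi_s$ to $\varphi_0\equiv0$. For $\varphi_0\equiv 0$ the claim is the strict $g_H$-convexity of $\Omega$, which follows from (\ref{II partial Omega gS gH}) and $\mu_n<0$ (Lemma \ref{app mun negatif}). I will then show that positivity cannot first fail along the path. At a degenerate direction $X$ one would need $II^H_{\partial\Omega}(X,X)=-\mu\mu_{n'}d\varphi(X)^2$. I expect to exclude this using (\ref{II partial Omega gS gH}) to rewrite $II^H_{\partial\Omega}=\mu(II^S_{\partial\Omega}-(\log\mu)_n g_S)$, with $II^S_{\partial\Omega}>0$ and $|d\varphi(X)|<|X|_S$ from the spacelike condition, and by comparing $\mu\mu_{n'}d\varphi(X)^2$ against $-\mu(\log\mu)_n|X|_S^2$ after normalizing $n'=e_n/\mu$. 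This last comparison is the step I expect to need care.
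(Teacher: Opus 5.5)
Your argument is correct, and its decisive step is exactly the paper's: inserting (\ref{II partial Omega gS gH}) and $n'=e_n/\mu$ into Lemma \ref{app rel ff} gives $\langle II_{\partial M_{\varphi}}(F_*X,F_*X),n'\rangle=\mu\, II^S_{\partial\Omega}(X,X)-\mu_n\left(g_S(X,X)-d\varphi(X)^2\right)$. This is positive for every $X\neq 0$ because $II^S_{\partial\Omega}>0$, $\mu_n\leq 0$ (so you do use that sign after all, paired with the $g_S$ term) and $d\varphi(X)^2<g_S(X,X)$; together with your facts $\langle n',N_1\rangle>0$ and $\langle II_{\partial M_{\varphi}},n\rangle=a\langle II_{\partial M_{\varphi}},N_1\rangle$, this is precisely the paper's proof, and since the inequality holds pointwise for every totally geodesic $\varphi$, the deformation/continuity argument, the isometry to $\{t=0\}$ and the explicit Hessian of $\varphi$ are unnecessary.
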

\begin{proof}
If $M_{\varphi}$ is totally geodesic, the second fundamental form of $\partial M_{\varphi}$ in $\widetilde{AdS}^{n,1}$ reduces to the second fundamental form of $\partial M_{\varphi}$ in $M_{\varphi}$, i.e. is given by
\begin{equation}\label{pf lem tt geod H2 II}
II_{\partial M_{\varphi}}=II_{\partial M_{\varphi},M_{\varphi}}N_1
\end{equation}
where $N_1$ is the unit vector which is normal to $\partial M_{\varphi}$, tangent to $M_{\varphi}$ and inward-directed. By Lemma \ref{app rel ff} and using the relation (\ref{II partial Omega gS gH}) between $II^H_{\partial\Omega}$ and $II^S_{\partial\Omega}$ we obtain
$$II_{\partial M_{\varphi},M_{\varphi}}(F_*X,F_*X)\ \langle N_1,n'\rangle=\mu\ II^S_{\partial\Omega}(X,X)-\mu_n\left(g_S(X,X)-d\varphi(X)^2\right)$$
for all $X\in T\Omega,$ where we also use that $n=\mu n'$ is the unit normal vector of $\partial\Omega$ in $B^n$ with respect to the metric $g_S.$ Since $\Omega$ is strictly convex with respect to $g_S$, $\mu_n\leq 0$ (by Lemma \ref{app mun negatif}) and $g_s\geq d\varphi\otimes d\varphi$ ($\varphi$ is spacelike) we deduce that  $II_{\partial M_{\varphi},M_{\varphi}}$ is positive definite, and using (\ref{pf lem tt geod H2 II}) again that assumption (H2) holds in that case ($\langle II_{\partial M_{\varphi}},n\rangle=II_{\partial M_{\varphi},M_{\varphi}}\langle n,N_1\rangle$ with $\langle n,N_1\rangle>0$ if $n\in (T\partial M_{\varphi})^{\perp}$ is inward-directed, by definition).
\end{proof}

\section{Some algebraic properties of the curvature operators}\label{appendix properties Hm}
Let us define, for $p\in B(0,1)$ and $q'\in S_n(\R)$,
$$\mathcal{H}_m^0(p,q'):=\frac{m!(n-m)!}{n!}\frac{1}{(1-|p|^2)^{\frac{m}{2}}}F_m(A(p)^{-1}q')$$
and set, for $p\in B(0,1)\subset\R^n,$ the positive cone associated to the operator $\mathcal{H}_m^0,$
$$\Gamma_m(p)=\{q'\in S_n(\R)|\ \mathcal{H}_k^0(p,q')>0,\ k=1,\ldots,m\}.$$
The set of positivity of $\mathcal{H}_m^0$  is defined by
$$\mathcal{E}:=\{(p,q')\in B(0,1)\times S_n(\R)|\ q'\in\Gamma_m(p)\}$$
and it is well-known that the following properties hold on $\mathcal{E}$:
\begin{itemize}
\item the operator $\mathcal{H}_m^0$ is elliptic: for all $(p,q')\in\mathcal{E}$ and all $\xi\in\R^n\backslash\{0\},$
\begin{equation*}
\sum_{i,j}\frac{\partial\mathcal{H}_m^0}{\partial q'_{ij}}(p,q')\ \xi_i\ \xi_j>0\ ;
\end{equation*}
\item the operator ${\mathcal{H}_m^0}^{\frac{1}{m}}$ is concave with respect to the second variable $q'$: for all $(p,q')\in\mathcal{E}$ and all $(\xi_{ij})_{ij}\in S_n(\R)$, 
\begin{equation*}
\sum_{i,j,\ k,l}\frac{\partial^2{\mathcal{H}_m^0}^{\frac{1}{m}}}{\partial q'_{ij}\partial q'_{kl}}(p,q')\ \xi_{ij}\ \xi_{kl}\leq 0\ ;
\end{equation*}
\end{itemize}
The operator $\mathcal{H}_m(x,p,q):=\frac{1}{\mu(x)^m}\mathcal{H}_m^0(p,q')$ with $q'=q+d(\log\mu)_x(\sum_sp_se_s)A(p)$ satisfies
$$\frac{\partial\mathcal{H}_m}{\partial q_{ij}}(x,p,q)=\frac{1}{\mu(x)^m}\frac{\partial\mathcal{H}_m^0}{\partial q'_{ij}}(p,q')\hspace{.5cm}\mbox{and}\hspace{.5cm}\frac{\partial^2\mathcal{H}_m^{\frac{1}{m}}}{\partial q_{ij}\partial q_{kl}}(x,p,q)=\frac{1}{\mu(x)}\frac{\partial^2{\mathcal{H}_m^0}^{\frac{1}{m}}}{\partial q'_{ij}\partial q'_{kl}}(p,q')$$ 
so that (\ref{intro Hm elliptic Km}) and (\ref{intro Hm concave Km}) hold on admissible functions.

\section{Two lemmas on spacelike hypersurfaces in $\widetilde{AdS}^{n,1}$}\label{appendix equidistant}
The first lemma gives an elementary bound on the oscillation of a spacelike graph in $\widetilde{AdS}^{n,1}$:
\begin{lem}\label{app lem estim u}
If $u:B^n\rightarrow\R$ is a spacelike function such that $u(0)=0$ then
\begin{equation}\label{app equidistant lem estim u}
|u(x)|\leq 2\arctan(|x|)
\end{equation}
 for all $x\in B^n.$ In particular, if $|x|\leq r<1,$ $|u(x)|\leq 2\arctan(r)<\pi/2.$
\end{lem}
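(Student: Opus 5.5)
The plan is to reduce the statement to a one-dimensional comparison along Euclidean rays from the origin, using the fact that the causal structure of $\widetilde{AdS}^{n,1}$ is that of the product metric $g_S-dt^2$ on $B^n\times\R$. Spacelike means $|\nabla^S u|<1$, so $|u(x)-u(0)|$ is bounded by the $g_S$-length of any curve from $0$ to $x$. In particular it is bounded by $d_S(0,x)$, exactly as in the $C^0$ estimate of Section \ref{section method C0}.

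First fix $x\in B^n$ and consider the segment $\gamma(s)=sx/|x|$, $s\in[0,|x|]$. This segment is a $g_S$-geodesic through the center, since the stereographic projection sends lines through $0$ to great circles through the pole. Its $g_S$-length is
$$\int_0^{|x|}\frac{2}{1+s^2}\,ds=2\arctan(|x|).$$
Next apply the Cauchy--Schwarz argument already used for (\ref{C0 u phi d}):
$$|u(x)|=|u(x)-u(0)|=\left|\int_0^{|x|}g_S(\nabla^Su(\gamma(s)),\gamma'(s))\,ds\right|\leq\int_0^{|x|}|\gamma'(s)|_S\,ds=2\arctan(|x|),$$
using $|\nabla^S u|<1$. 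Finally, $\arctan$ is increasing and $\arctan(1)=\pi/4$. So $|x|\leq r<1$ gives $|u(x)|\leq 2\arctan(r)<\pi/2$.

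There is no real obstacle here. The only point to check is the length computation: on a ray, $g_S=\lambda^2\,ds^2$ with $\lambda=2/(1+s^2)$, so the $g_S$-norm of the unit Euclidean tangent is $\lambda$. We do not even need $\gamma$ to be a geodesic; any curve would give an upper bound, and the ray gives exactly $2\arctan|x|$.
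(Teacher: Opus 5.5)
Your proof is correct and is essentially the paper's argument. The paper also integrates $g_S(\nabla^S u,x')$ along the ray $x(s)=sx_1$, $s\in[0,1]$, and computes $|x'|_S=2|x_1|/(1+s^2|x_1|^2)$ to obtain $2\arctan|x_1|$; this is your computation up to reparametrizing by Euclidean arclength (the case $x=0$, where $x/|x|$ is undefined, is trivial).
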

\begin{proof}
Let us fix $x_1\in B^n$ and consider the path $x(s)=sx_1,$ $s\in[0,1].$
Since $u(0)=0,$ we have
$$u(x_1)=\int_0^1\frac{d}{ds}(u(x(s))ds=\int_0^1g_S\left(\nabla^Su,x'\right)ds$$
and, since $|\nabla^Su|_S\leq 1,$
$$|u(x_1)|\leq\int_0^1\left|g_S\left(\nabla^Su,x'\right)\right|ds\leq \int_0^1|x'|_Sds.$$
Since $x(s)=sx_1$ we have $x'=x_1$ and $|x'|_S=\frac{2|x_1|}{1+s^2|x_1|^2},$ which yields
$$|u(x_1)|\leq \int_0^1\frac{2|x_1|}{1+s^2|x_1|^2}ds=2\arctan|x_1|.$$
\end{proof}
The second lemma shows that there exist equidistant hypersurfaces in $\widetilde{AdS}^{n,1}$ whose slope is arbitrarily close to 1 on a given compact subset. They serve as barriers for the gradient estimate on the boundary. First recall the quadric model $\mathbb{H}^{n,1}\subset\R^{n,2}$ of Anti-de Sitter geometry described in (\ref{quadric model}), and its relation to the Poincar\'e model $\widetilde{AdS}^{n,1}=B^n\times\R$ used in the paper. For $\tau_0\in (-\pi/2,\pi/2)$ the set $\{\cos\tau_0\ X+\sin\tau_0\ e_{n+2},\ X\in\mathbb{H}^{n}\}\subset\mathbb{H}^{n,1}$ is a set of points at distance $|\tau_0|$ to $\mathbb{H}^{n}$ (the distance is here the natural lorentzian distance to a given spacelike hypersurface); its equation in $B^n\times\R$ is $\mu\sin(t)=\sin\tau_0.$ Let us shift the set vertically in $B^n\times\R$, in order to obtain an equidistant hypersurface $E_{\tau_0}$ which contains the origin $(x,t)=(0,0)$; its equation is $\mu\sin(t+\tau_0)=\sin\tau_0,$ or equivalently in $\mathbb{H}^{n,1}$ 
$$\sin\tau_0\ X_{n+1}+\cos\tau_0\ X_{n+2}=\sin\tau_0.$$
We now consider the isometry $f\in Isom^0(\R^{n,2})$ given by
$$f(e_n)=\cosh\beta\ e_n+\sinh\beta\ e_{n+2},\  f(e_{n+2})=\sinh\beta\ e_n+\cosh\beta\ e_{n+2}$$
and $f(e_i)=e_i$ for $i=1,\ldots,n-1$ and $i=n+1,$ where $\beta$ is a real parameter. The equation of the equidistant $f(E_{\tau_0})$ is
$$\sin\tau_0\ X_{n+1}+\cos\tau_0\ \left(-X_n\sinh\beta+X_{n+2}\cosh\beta\right)=\sin\tau_0.$$
In the Poincar\'e model $B^n\times\R$ it reads
\begin{equation}\label{app eqn u equidistant phi} 
\sin\tau_0\cos t+\cos\tau_0\sin t\cosh\beta=\frac{2x_n}{1+|x|^2}\cos\tau_0\sinh\beta+\sin\tau_0\frac{1-|x|^2}{1+|x|^2}.
\end{equation}
It is the graph of a spacelike function $\psi:B^n\rightarrow\R$ which is such that
\begin{equation}\label{app u partial u phi}
\psi(0)=0,\ \partial_i\psi(0)=0\mbox{ for } 1\leq i\leq n-1\mbox{ and }\ \partial_n\psi(0)=2\tanh\beta;
\end{equation}
since $g_S=4|dx|^2$ at the center $0$ of $B^n,$ we obtain that $|\nabla^S\psi(0)|_S=|\tanh\beta|.$ The next lemma states that $|\nabla^S\psi|_S$ tends to 1 as $|\beta|$ tends to infinity, uniformly on compact subsets of $B^n.$
\begin{lem}\label{app norm nabla u tends 1}
For all $r\in (0,1),$ $|\nabla^S\psi|_S\rightarrow 1$ as $|\beta|\rightarrow+\infty,$ uniformly on $\overline{B}(0,r).$
\end{lem}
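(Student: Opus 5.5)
We will prove the lemma by passing to the limit $|\beta|\to+\infty$ directly in the implicit equation (\ref{app eqn u equidistant phi}): after normalization the equation converges in $C^1$ to the equation of a \emph{null} hypersurface, and the implicit function theorem then transfers the value $1$ of its slope to $|\nabla^S\psi|_S$. We treat $\beta\to+\infty$; the case $\beta\to-\infty$ follows by the symmetry $x_n\mapsto -x_n$, which changes $\tanh\beta$ into $-\tanh\beta$.

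\emph{Step 1: the normalized equation and its limit.} We divide (\ref{app eqn u equidistant phi}) by $\cosh\beta$ and write it as $G_\beta(x,t)=0$, where
$$G_\beta(x,t)=\frac{\sin\tau_0\cos t}{\cosh\beta}+\cos\tau_0\sin t-\cos\tau_0\tanh\beta\ y_n(x)-\frac{\sin\tau_0}{\cosh\beta}\frac{1-|x|^2}{1+|x|^2},\qquad y_n(x):=\frac{2x_n}{1+|x|^2}.$$
As $\beta\to+\infty$, $G_\beta$ converges in $C^1(\overline{B}(0,r)\times\R)$ to
$$G_\infty(x,t)=\cos\tau_0\left(\sin t-y_n(x)\right).$$
Here $y_n$ is the restriction to $B^n$ of the coordinate function $X_n$ of $(\mathbb{S}^n)^+$ under the stereographic projection. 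Hence $|\nabla^S y_n|_S^2=1-y_n^2$. On $\overline{B}(0,r)$ we have $|y_n|\le 2r/(1+r^2)<1$, so the zero set of $G_\infty$ with $|t|<\pi/2$ is the graph of $\psi_\infty=\arcsin(y_n)$. This function is smooth on $\overline{B}(0,r)$ and satisfies
$$|\nabla^S\psi_\infty|_S^2=\frac{1-y_n^2}{1-y_n^2}=1,$$
so its graph is a null hypersurface.

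\emph{Step 2: uniform $C^1$ convergence $\psi_\beta\to\psi_\infty$.} By Lemma \ref{app lem estim u}, since $\psi_\beta(0)=0$, we have $|\psi_\beta|\le 2\arctan r=:t_r<\pi/2$ on $\overline{B}(0,r)$. Thus all graphs lie in the compact set $\overline{B}(0,r)\times[-t_r,t_r]$. On this set $\partial_tG_\infty=\cos\tau_0\cos t\ge\cos\tau_0\cos t_r>0$, and by the $C^1$ convergence $\partial_tG_\beta\ge c>0$ there for $\beta$ large. Hence for each $x$ the function $t\mapsto G_\beta(x,t)$ is strictly increasing on $[-t_r,t_r]$, so $\psi_\beta(x)$ is its unique zero in that interval. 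From $G_\beta(x,\psi_\beta(x))=0$ we get
$$|G_\infty(x,\psi_\beta(x))|\le\sup|G_\beta-G_\infty|\to0,$$
and the lower bound on $\partial_tG_\infty$ then gives $\psi_\beta\to\psi_\infty$ uniformly. Differentiating the implicit equation gives $\nabla^S\psi_\beta=-\nabla^S_xG_\beta/\partial_tG_\beta$ evaluated at $(x,\psi_\beta(x))$. The numerator and denominator converge uniformly to the corresponding expressions for $G_\infty$ at $(x,\psi_\infty(x))$, and the denominator stays bounded below. Therefore $\nabla^S\psi_\beta\to\nabla^S\psi_\infty$ uniformly, and consequently $|\nabla^S\psi_\beta|_S\to|\nabla^S\psi_\infty|_S=1$ uniformly on $\overline{B}(0,r)$.

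\emph{Main obstacle.} The delicate point is making sure that the branch of solutions selected by $\psi(0)=0$ is the one converging to $\arcsin y_n$. The implicit equation has other solutions in $t$, since $\sin t=y_n$ also holds for $t$ near $\pi-\arcsin y_n$. The a priori bound $|\psi|<\pi/2$ from Lemma \ref{app lem estim u} resolves this, because it keeps every graph inside the strip where $G_\infty$ is strictly monotone in $t$. The remaining checks are routine: the expression of $|\nabla^S y_n|_S$ via the stereographic projection, and the uniform $C^1$ convergence of $G_\beta$, both of which follow from $1/\cosh\beta\to0$ and $\tanh\beta\to1$.
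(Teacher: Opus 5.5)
Your proof is correct and follows essentially the same route as the paper: normalize (\ref{app eqn u equidistant phi}) by $\cosh\beta$, pass to the limit $\sin t\to\pm\frac{2x_n}{1+|x|^2}$ in the implicit equation and in its implicitly differentiated form, and use Lemma \ref{app lem estim u} to keep $\cos t$ bounded away from $0$. The difference is one of packaging: the paper takes limits factor by factor in its explicit formula $|\nabla^S\psi|_S^2=A/(\cos^2t\,B)$, whereas you identify the limit graph $t=\arcsin y_n$ and get the value $1$ from its nullness, $|\nabla^S y_n|_S^2=1-y_n^2$, which is the identity $|(1+|x|^2)e_n-2x_nx|^2=(1+|x|^2)^2-4x_n^2$ read geometrically.
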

\begin{proof}
Writing Equation (\ref{app eqn u equidistant phi}) in the form
$$\sin t\pm \frac{2x_n}{1+|x|^2}=\frac{\tan\tau_0}{\cosh\beta}\left(\frac{1-|x|^2}{1+|x|^2}-\cos t\right)+\frac{2x_n}{1+|x|^2}(\tanh\beta\pm 1)$$
we see that $\sin t\rightarrow \pm \frac{2x_n}{1+|x|^2}$ as $\beta\rightarrow\pm\infty,$ uniformly on $B^n,$ and deduce that
\begin{equation}\label{lim cos2t uniform}
\cos^2t=1-\sin^2 t\rightarrow 1- \frac{4x_n^2}{(1+|x|^2)^2}\hspace{.5cm}\mbox{as}\hspace{.5cm}|\beta|\rightarrow +\infty,
\end{equation}
uniformly on $B^n.$ Differentiating (\ref{app eqn u equidistant phi}) we easily obtain the formula
$$|\nabla^S\psi|_S^2=\frac{1}{(1+|x|^2)^2h'^2}\left|\cos\tau_0\sinh\beta(-2x_nx+(1+|x|^2)e_n)-2\sin\tau_0\ x\right|^2$$
with $h'=-\sin\tau_0\sin t+\cos\tau_0\cos t\cosh\beta.$ We write that formula in the form
\begin{equation}\label{app equidistant nabla u A B}
|\nabla^S\psi|_S^2=\frac{A}{\cos^2t\ B}
\end{equation}
with
$$A=\left|\cos\tau_0\tanh\beta\ (-2x_nx+(1+|x|^2)e_n)-\frac{2\sin\tau_0\ x}{\cosh\beta}\right|^2$$
and
$$B=(1+|x|^2)^2\cos^2\tau_0\left(1-\frac{\tan\tau_0\tan t}{\cosh\beta}\right)^2.$$
We see that
\begin{equation}\label{app equidistant A lim uniform}
A\rightarrow \left|\cos\tau_0\ (-2x_nx+(1+|x|^2)e_n)\right|^2\hspace{.5cm}\mbox{as}\hspace{.5cm}|\beta|\rightarrow+\infty
\end{equation}
uniformly on $B^n$ and that
\begin{equation}\label{app equidistant B lim uniform}
B\rightarrow (1+|x|^2)^2\cos^2\tau_0\hspace{.5cm}\mbox{as}\hspace{.5cm}|\beta|\rightarrow+\infty
\end{equation}
uniformly on $\overline{B}(0,r)$, since $|t|\leq\pi/2-\delta$ implies that $\tan t$ is bounded. We deduce from (\ref{lim cos2t uniform})-(\ref{app equidistant B lim uniform}) that $|\nabla^S\psi|_S\rightarrow 1$ as $|\beta|\rightarrow+\infty,$ uniformly on $\overline{B}(0,r).$ In that last step, we also use that the function $A$ is bounded above and the functions $B$ and $\cos^2t$ are bounded below, since $|t|\leq\pi/2-\delta$ implies that $\cos t\geq\cos(\pi/2-\delta)>0$ and $|\tan t|\leq\tan(\pi/2-\delta).$
\end{proof}

\noindent\textbf{Acknowledgement.} The author is greatly indebted to Andrea Seppi for many enlightening discussions on Anti-de Sitter geometry and the Dirichlet problem studied in the paper. Moreover, the paper is motivated by the joint project of extending the results of \cite{BS} to Anti-de Sitter geometry.

\end{document}